  \documentclass[10pt]{amsart}



\PassOptionsToPackage{linktocpage}{hyperref}
\usepackage[french, english]{babel}
\usepackage[utf8, latin1]{inputenc}

\usepackage{stmaryrd} 

\usepackage{amscd,amssymb,mathtools}
\usepackage{amsmath,mathrsfs,amsthm,amsfonts}
\usepackage{mathrsfs}

\usepackage[bb=px]{mathalpha}
\usepackage{bbm}
\usepackage{hyperref}
\usepackage{tikz-cd}


\newcommand{\ols}[1]{\mskip.5\thinmuskip\overline{\mskip-.5\thinmuskip {#1} \mskip-.5\thinmuskip}\mskip.5\thinmuskip} 
\newcommand{\olsi}[1]{\,\overline{\!{#1}}} 
\makeatletter
\newcommand\closure[1]{
  \tctestifnum{\count@stringtoks{#1}>1} 
  {\ols{#1}} 
  {\olsi{#1}} 
}

\long\def\count@stringtoks#1{\tc@earg\count@toks{\string#1}}
\long\def\count@toks#1{\the\numexpr-1\count@@toks#1.\tc@endcnt}
\long\def\count@@toks#1#2\tc@endcnt{+1\tc@ifempty{#2}{\relax}{\count@@toks#2\tc@endcnt}}
\def\tc@ifempty#1{\tc@testxifx{\expandafter\relax\detokenize{#1}\relax}}
\long\def\tc@earg#1#2{\expandafter#1\expandafter{#2}}
\long\def\tctestifnum#1{\tctestifcon{\ifnum#1\relax}}
\long\def\tctestifcon#1{#1\expandafter\tc@exfirst\else\expandafter\tc@exsecond\fi}
\long\def\tc@testxifx{\tc@earg\tctestifx}
\long\def\tctestifx#1{\tctestifcon{\ifx#1}}
\long\def\tc@exfirst#1#2{#1}
\long\def\tc@exsecond#1#2{#2}
\makeatother

\newtheorem{theorem}[subsubsection]{Theorem}
\newtheorem{lemm}[subsubsection]{Lemma}
\newtheorem{definition}[subsubsection]{Definition}
\newtheorem{remark}[subsubsection]{Remark}
\newtheorem*{remark*}{Remark}

\newtheorem{prop}[subsubsection]{Proposition}

\newtheorem{prop-def}[subsubsection]{Proposition-Definition}
\setcounter{secnumdepth}{3}

\numberwithin{equation}{subsection}

\newtheorem{atheorem}{Theorem}[section]

\newtheorem{adefinition}[atheorem]{Definition}

\newcommand{\Hom}{\mathrm{Hom}}
\newcommand{\Frob}{\mathrm{Frob}}

\renewcommand{\d}{\mathrm{d}}

\renewcommand{\hat}{\widehat}
\newcommand{\vol}{\mathrm{vol}}

\newcommand{\Aut}{\mathrm{Aut}}

\newcommand{\Gal}{\mathrm{Gal}}
\newcommand{\Ind}{\mathrm{Ind}}
\newcommand{\Tr}{\mathrm{Tr}}

\newcommand{\Pic}{\mathrm{Pic}}
\newcommand{\ad}{\mathrm{Ad}}
\newcommand{\Ad}{\mathrm{Ad}}
\newcommand{\cInd}{\mathrm{c\text{-}Ind}}

\newcommand{\res}{\mathrm{res}}

\newcommand{\AAA}{\mathbb{A}}
\newcommand{\bbb}{\mathfrak{b}}
\renewcommand{\ggg}{\mathfrak{g}}

\newcommand{\nnn}{\mathfrak{n}}

\newcommand{\ppp}{\mathfrak{p}}

\newcommand{\zzz}{\mathfrak{z}}
\newcommand{\ttt}{\mathfrak{t}}

\newcommand{\ooo}{\mathcal{O}}

\newcommand{\car}{\mathfrak{c}}

\newcommand{\ago}{\mathfrak{a}}

%
%
%
%
%

\DeclareMathOperator{\Id}{Id}

\DeclareMathOperator{\Spec}{Spec}

%
%
%
%
%
\renewcommand{\leq}{\leqslant}
\renewcommand{\geq}{\geqslant}
\newcommand{\htau}{\hat \tau}


\begin{document}

\title[$\ell$-adic local systems and Higgs bundles]
{$\ell$-adic local systems and Higgs bundles: the generic case} 

\author{Hongjie Yu}

\address{
Weizmann Institute of Science, Herzl St 234, Rehovot, Israel \newline
Current address: Morningside Center of Mathematics, Academy of Mathematics and Systems Science, Chinese Academy of Sciences, Beijing 100190, China}

\email{hongjie.yu@amss.ac.cn}

\begin{abstract}
Let $X$ be a projective smooth geometrically connected curve defined over a finite field $\mathbb{F}_q$ of cardinality $q$. Let $S$ be a non-empty finite set of closed points of $X$. Let $\closure{X}$ and $\closure{S}$ be the base change of $X$, $S$ to an algebraic closure $\closure{\mathbb{F}}_q$. We consider the set of $\closure{\mathbb{Q}}_\ell$-local systems ($\ell\nmid q$)  of rank $n$ over $\closure{X}-\closure{S}$ with prescribed tame regular semisimple and generic ramifications in $\closure{S}$. The genericity ensures that such an $\ell$-adic local system is automatically irreducible. We show that the number of these $\ell$-adic local systems fixed by Frobenius endomorphism equals the number of stable logarithmic Higgs bundles of rank $n$ and degree $e$ coprime to $n$, with a fixed residue, up to a power of $q$. In the split case, this number is equal to the number of stable parabolic Higgs bundles (with full flag structures)  with generic parabolic weights fixed by the $\mathbb{G}_m$-action defined as the dilation on Higgs fields. 
\end{abstract}
\maketitle

\section{Introduction}
Let $\Sigma$ be a compact Riemann surface and $S$ a finite set of points on $\Sigma$. The Simpson correspondence (see \cite{Simpson1} \cite{Simpson}) is a bijection between filtered stable degree $0$ $\mathbb{C}$-local systems, and filtered stable degree $0$ Higgs bundles, over $\Sigma-S$. The notion of a filtered Higgs bundle is essentially the same as a Higgs bundle with a quasi-parabolic structure. The notion of a filtered local system is a local system with filtration on each puncture. This correspondence preserves the rank and permutes residue data defined by Simpson around each puncture. Roughly speaking, it exchanges parabolic weights of parabolic Higgs bundles and eigenvalues of monodromy actions of local systems.

We are interested in the generic case with regular semisimple monodromy actions. On the local system side, it is the case where the eigenvalues of the monodromy action around each puncture are distinct, and these eigenvalues lie in a general position. The latter is a condition to guarantee that a local system with monodromy actions prescribed by these eigenvalues is automatically irreducible. The generic condition implies that semistability coincides with stability on the Higgs bundle side. In this case, under Simpson's correspondence, a stable (strongly) parabolic Higgs bundle corresponds to an irreducible local system with trivial filtration around each puncture (see the diagram on page 720 of \cite{Simpson} where we set $b=c=0$).

The main result of this article presents an identity between the number of isomorphism classes of $\ell$-adic local systems (in coefficients $\closure{\mathbb{Q}}_\ell$) over a punctured curve over a finite field $\mathbb{F}_q$  ($\ell$ is different from the characteristic of $\mathbb{F}_q$), with prescribed tame regular semisimple generic local monodromies, and, up to a power of $q$, the number of semistable logarithmic Higgs bundles with prescribed residue in the coprime case. 
When the puncture points are $\mathbb{F}_q$-rational and local monodromies are ``split", we demonstrate that this number is equal to the number of semistable parabolic Higgs bundles with a generic stability parameter fixed by a $\mathbb{G}_m$ action. This is analogue to, yet different from, the Riemann surface case. As a corollary, we positively confirm some of P. Deligne's conjectures on counting the $\ell$-adic local system in these cases.

There are several attempts to establish a similar theory to other circumstances that may have rich arithmetic applications. Notably, a $p$-adic Simpson correspondence is initiated by Faltings in \cite{Faltings}. His construction was pursued by A. Abbes, M. Gros, and T. Tsuji \cite{AGT}. There is also the work of Lan, Sheng, and Zuo \cite{LSZ1} where the role of semistability is studied. It is still an active research area. For a field over a positive characteristic, there are various versions of correspondence that can be seen as analogous to Simpson's correspondence. However, the picture is not yet clear. Especially, $\ell$-adic local systems have not entered the scene.

The current work is initially motivated by Deligne's conjectures in \cite{Deligne} and \cite{IHES}  regarding the enumeration of $\ell$-adic local systems over a curve defined over a finite field $\mathbb{F}_q$. The story starts from a two-page article by V. Drinfeld \cite{Drinfeld} who observed that the number of two-dimensional geometrically irreducible $\ell$-adic continuous representations of $\pi_1(X\otimes\closure{\mathbb{F}}_q)$ that can be extended to a representation of $\pi_1(X)$ (we ignore the base point for now) behaves as if it were expressed by a Lefschetz fixed-point formula on an algebraic variety over $\mathbb{F}_q$. Notably, these quantities are independent of $\ell$.

The case of Deligne's conjecture proved in the current article can be seen as a generalization of Arinkin's result which is written down by Deligne in \cite{Deligne} where the tame monodromy action around each puncture is ``split". Arinkin shows that, in this case, the number is equal to that of indecomposable parabolic bundles. After Jyun-Ao Lin's result \cite[Theorem 1.4]{Lin} or G. Dobrovolska, V. Ginzburg, and R. Trakvin's result \cite{DGT}, the latter number is further related to parabolic Higgs bundles. 

In addition to the case solved by D. Arinkin, there exist other instances where this conjecture is confirmed. When the ramifications are unipotent with only one Jordan block, and there are at least two such ramifications, the conjecture is verified by Deligne-Flicker \cite{Deligne-Flicker}. Flicker \cite{Flicker} has demonstrated the validity of the conjecture for the case of rank $2$ with one unipotent ramification. Drinfeld's original result, i.e., no ramifications, is extended to higher ranks in \cite{Yu1}.  
Recently, another paper by the author (\cite{Yu3}) addresses the rank $2$ case with all possible tame ramifications. 
However, we have yet to develop a theory that explains these results and leads to a comprehensive understanding of them. We refer to \cite{IHES} and \cite{Deligne} for more discussions.

\subsection{Counting $\ell$-adic local systems}\label{11C}
In order to present our results, we introduce the tame regular semisimple and generic local monodromies in the $\ell$-adic setting. Additionally, we introduce the concept of logarithmic Higgs bundles with prescribed residues.

Let $X$ be a smooth, projective, and geometrically connected curve defined over $\mathbb{F}_q$. Let $\closure{\mathbb{F}}_q$ be an algebraic closure of $\mathbb{F}_q$, and let $\closure{X} = X \otimes \closure{\mathbb{F}}_q$. Suppose ${S}$ is a finite set of closed points in ${X}$, and let $\closure{S}=S\otimes \closure{\mathbb{F}}_q$. We can identify $\closure{S}$ with a subset of $\closure{X}(\closure{\mathbb{F}}_q)$.

For each $x\in \closure{S}$, we choose a rank $n$ tame $\ell$-adic local system $\mathfrak{R}_x$ over the punctured disc $\closure{X}_{(x)}^{\ast}= \closure{X}_{(x)}-\{x\}$, where $\closure{X}_{(x)}$ is the henselisation of $\closure{X}$ at $x$. We denote by $\mathfrak{R}:= (\mathfrak{R}_x)_{x\in \closure{S}}$ the family of these local systems.
 We define \[E_n(\mathfrak{R})\] to be the set of isomorphism classes of $\ell$-adic local systems $\mathfrak{L}$ on $\closure{X}-\closure{S}$ of rank $n$ such that the inverse image $\mathfrak{L}|_{\closure{X}_{(x)}^{\ast}}$ of $\mathfrak{L}$ is isomorphic to $\mathfrak{R}_x$ for all $x\in \closure{S}$. Let \[E_n^{irr}(\mathfrak{R})\subseteq E_n(\mathfrak{R})\]
be the subset consisting of irreducible $\ell$-adic local systems. Suppose that the family $\mathfrak{R}$ is fixed by the pullback of the Frobenius endomorphism, i.e., for any $x\in \closure{S}$
\begin{equation}\label{13}\Frob^{*}\mathfrak{R}_{\Frob(x)}\cong \mathfrak{R}_x.\end{equation}
Then $\Frob^{*}$ induces a permutation on $E_n^{irr}(\mathfrak{R})$.

Let $\closure{\eta}$ be a geometric point lying over the generic point of $\closure{X}$ and $\closure{X}_{(x)}^{\ast}$. Then the tame fundamental group \(\pi_1(\closure{X}_{(x)}^{\ast}, \closure{\eta})^{t}\) is isomorphic to \[\hat{\mathbb{Z}}^{p'}(1):= \lim_{\underset{d}{\leftarrow}}\mu_d(\closure{\mathbb{F}}_q),  \]
where the projective limit is taken over the sets of $d$-th roots of unity of $\closure{\mathbb{F}}_q$ ($(d,q)=1$), ordered by inclusion and the transition function from $\mu_m(\closure{\mathbb{F}}_q)$ to $\mu_d(\closure{\mathbb{F}}_q)$ is given by $x\mapsto x^{m/d}$. 
For each $x\in \closure{S}$, the tame local system $\mathfrak{R}_x$ corresponds to an $\ell$-adic representation of $\hat{\mathbb{Z}}^{p'}(1)$ of rank $n$.  The tame local system $\mathfrak{R}_x$ is regular semisimple if and only if 
\[ \mathfrak{R}_x= \bigoplus_{i=1}^n \varepsilon_{x, i} \]
where $\varepsilon_{x, i}$ are pairwise non-isomorphic characters of $\hat{\mathbb{Z}}^{p'}(1)$. A necessary condition for $E_n(\mathfrak{R})$ to be non-empty (\cite[2.10]{Deligne}) is that 
\begin{equation}\label{prode=1}\prod_{x\in \closure{S} }\prod_{i=1}^n\varepsilon_{x, i}=1,  \end{equation}
as a character of $\hat{\mathbb{Z}}^{p'}(1)$.
We say that $\mathfrak{R}$ is generic if
\begin{equation}\label{generic} E_n^{irr}(\mathfrak{R})= E_n(\mathfrak{R}). \end{equation} 
Assuming \eqref{prode=1}, Deligne provides a condition (\cite[(2.10.3)]{Deligne}) that is sufficient for $\mathfrak{R}$ to be generic. By applying this criterion, we can observe that most cases meet the generic condition, which is why they are referred to as ``generic".

Let $K_X$ be a canonical divisor and \[ D=K_X+\sum_{v\in S}v.\] Suppose $e$ is an integer coprime to $n$. 
Let $\mathfrak{M}_n^{e}(D)$ be the coarse moduli space of semistable logarithmic Higgs bundles of rank $n$ and degree $e$ with simple poles in $S$. Recall that a logarithmic Higgs bundle is a pair $(\mathcal{E}, \varphi)$ with a vector bundle $\mathcal{E}$ over $X$ and a bundle morphism: \[\varphi: \mathcal{E}\longrightarrow \mathcal{E}\otimes_{\mathcal{O}_X} \mathcal{O}_X(D)\]
A pair $(\mathcal{E}, \varphi)$ is semistable if every non-zero sub-Higgs bundle has a slope ($=\deg/\mathrm{rank}$) smaller than or equal to $e/n$. 
\begin{remark*}
In the literature, the notation $\Omega_{X/\mathbb{F}_q}^{1}(log \sum_{v\in S}v)$ is often used to denote $\mathcal{O}_X(D)$. 
\end{remark*}

For each $v\in S$, let \[\mathcal{R}_v:=\{z^n+a_1 z^{n-1}+\cdots+a_n\in \kappa_v[z] \}\] where $\kappa_v$ is the residue field of $v$. The space $\mathcal{R}_v$ is a $\kappa_v$-linear space. It can be viewed as an $\mathbb{F}_q$-linear space by forgetting $\kappa_v$-structure (i.e., it is isomorphic to $R_{\kappa_v/\mathbb{F}_q}V$, the restriction of scalars from $\kappa_v$ to $\mathbb{F}_q$ of a $\kappa_v$-vector scheme $V$ of dimension $n$). 
We have an $\mathbb{F}_q$-morphism of schemes (see the paragrah before Remark \ref{RMKS3} for its definition) : 
\[ \res: \mathfrak{M}_n^{e}(D)\longrightarrow \prod_{v\in S}\mathcal{R}_v.  \]
It sends a pair $(\mathcal{E}, \varphi)$ to its characteristic polynomial of the fiber map $\varphi_v$ of $\varphi$ in $v$. For each $o\in \prod_{v\in S}\mathcal{R}_v(\mathbb{F}_q)$, we denote by $\mathfrak{M}_n^{e}(o)$ the fiber $\res^{-1}(o)$. 

\begin{adefinition}
Let $o=(o_v)_{v\in S}$ be a point in $\prod_{v\in S}\mathcal{R}_v(\mathbb{F}_q).$ For each $v\in S$, $o_v$ is a monic polynomial of degree $n$ with coefficients in $\kappa_v$.
We say that such a point $o$ is similar to $\mathfrak{R}$ if the following conditions hold. 
\begin{enumerate}
\item Each $o_v$ is regular in the sense that all of its roots are distinct (such a polynomial is often called separable, but a separable polynomial must have two by two distinct roots does not seem to be universally accepted). 
\item Let $a_v\in \kappa_v$ be the sum of roots of $o_v$. 
The point $o$ satisfies the following necessary condition for $\mathfrak{M}_n^e(o)$ to be non-empty: 
\begin{equation}\sum_{v\in S} \Tr_{\kappa_v/\mathbb{F}_q}(a_v)=0 . \end{equation}
\item
Let $v\in S$ and $x\in \closure{S}$ a point lying over $v$.  Suppose that $v$ has degree $d_v$, i.e. $\{v\}\otimes\closure{\mathbb{F}}_q$ has $d_v$ points. 
Since $\Frob^{ d_v}(x)=x$, 
the condition \eqref{13} implies that $\Frob^{\ast d_v}$ permutes the set of irreducible factors of $\mathfrak{R}_x$. Its orbits define a partition of $n$. This partition depends only on $v$, not on $x$. The degrees of the irreducible factors of $o_v$ also define a partition of $n$. Assume that these two partitions coincide. 
\end{enumerate}
\end{adefinition}

One may think of $o$ as ``$\log \mathfrak{R}$" if it is similar to $\mathfrak{R}$.  
We show in Appendix \ref{regulartra} that if $p\neq 2$ and $q\geq n(n-1)/2$, then for any $S$ and $(\mathfrak{R})_{x\in \closure{S}}$ there exists an $o$ that is similar to $\mathfrak{R}$. I thank a referee who points out to me that such an element does not always exist.

\begin{atheorem}
\label{THEO}
Let  $\mathfrak{R}=(\mathfrak{R}_x)_{x\in\closure{S}}$ be a family of rank $n$ tame regular semisimple generic (\eqref{generic}) $\ell$-adic local systems over $(\closure{X}_x^{\ast})_{x\in\closure{S}}$ satisfying \eqref{prode=1}. Suppose that $\mathfrak{R}$ is fixed by Frobenius in the sense of \eqref{13} so that $\Frob$ acts on $E_n^{irr}(\mathfrak{R})$ via pullback. We define $E_n^{irr}(\mathfrak{R})^{\Frob^{\ast k}}$ to be the set of points fixed by the $k$-iterated action of $\Frob^{\ast}$. Let \[ o \in \prod_{v\in S}\mathcal{R}_v(\mathbb{F}_q)\] be any point similar to $\mathfrak{R}$.
Let $e$ be an integer such that $(e, n)=1$. Then we have \[|E_n^{irr}(\mathfrak{R})^{\Frob^{*k}}| = q^{-\frac{k}{2}\dim\mathfrak{M}_n^{e}(o) }| \mathfrak{M}_n^{e}(o)(\mathbb{F}_{q^k}) |, \quad \forall k\geq 1.   \]

\end{atheorem}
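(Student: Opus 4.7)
The plan is to compare the two sides through Langlands reciprocity and Arthur's trace formula, matching the spectral/automorphic count with a Hitchin-fibration point count on the Higgs side. First I would invoke L.~Lafforgue's Langlands correspondence for $GL_n$ over function fields to translate $|E_n^{irr}(\mathfrak{R})^{\Frob^{*k}}|$ into a count of cuspidal automorphic representations of $GL_n$ over $\mathbb{F}_{q^k}(X)$, unramified outside $S$ and with prescribed tame principal-series local components at each $v\in S$ reflecting the characters $\varepsilon_{x,i}$ packaged by $\mathfrak{R}_x$. The genericity hypothesis \eqref{generic} is precisely what forces every such automorphic representation to be cuspidal, so no Eisenstein or residual spectrum contributes to the count.

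Next I would evaluate this count via the Arthur--Selberg trace formula applied to a factorizable test function $\phi=\bigotimes_v \phi_v$: the unit of the spherical Hecke algebra at $v\notin S$ and, at each $v\in S$, an explicit (Iwahori-type or pseudo-coefficient) function whose trace against an irreducible admissible representation isolates the prescribed local component attached to $\mathfrak{R}_x$. Genericity collapses the spectral side to the desired cuspidal count; the regular semisimple character of $\phi_v$ at each $v\in S$ localizes the geometric side to a sum of orbital integrals over elliptic regular semisimple rational conjugacy classes whose characteristic polynomial at $v$ is $o_v$. On the other side, I would decompose $|\mathfrak{M}_n^e(o)(\mathbb{F}_{q^k})|$ along the fibers of $\res$: thanks to $(e,n)=1$, every stable logarithmic Higgs bundle with residue of characteristic polynomial $o$ corresponds to a line bundle of the prescribed degree on a spectral cover of $X$ that is étale above $S$ (condition~(1) on $o$), and a Weil mass-formula computation rewrites the point count as a sum of the same shape, indexed by the same set of rational conjugacy classes --- this indexing matches precisely because $o$ is similar to $\mathfrak{R}$, in particular condition~(3) equating the local partitions on the two sides.

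The main obstacle is the term-by-term matching of these two sums, which is a local statement of Langlands--Kottwitz / fundamental-lemma type at each $v\in S$: one must verify that the orbital integral of $\phi_v$ at a regular semisimple element of characteristic polynomial $o_v$ coincides with the local contribution of the Hitchin fiber above $o_v$. In our tame regular semisimple setting both quantities become explicit --- the automorphic side through the character formula for tame principal series at regular elements, the geometric side through a combinatorial coset count on the étale spectral cover of the formal neighbourhood of $v$ --- and the two reduce to the same combinatorial sum once normalisations are aligned. The delicate bookkeeping lies in that alignment: the universal factor $q^{-\frac{k}{2}\dim\mathfrak{M}_n^e(o)}$ encodes the discrepancy between counting local systems up to isomorphism and counting $\mathbb{F}_{q^k}$-points of a smooth moduli space of dimension $\dim\mathfrak{M}_n^e(o)$, and matching this power of $q$ on the nose is where the coprimality $(e,n)=1$ (which kills automorphisms and forces stability~$=$~semistability throughout) and the appendix's existence of $o$ similar to $\mathfrak{R}$ both enter in an essential way.
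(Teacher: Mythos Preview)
Your overall architecture (Langlands correspondence, then a trace formula comparison) is right, but the mechanism you propose for the geometric side is not what actually happens, and the discrepancy is not a detail. The test function $e_\rho=\bigotimes_v e_{\rho_v}$ (built from Deligne--Lusztig characters at $v\in S$, not principal-series pseudocoefficients in general) is supported in $G(\mathcal{O})$. Hence in the coarse expansion of $J^{e}(e_\rho)$ by characteristic polynomials $\chi\in F[t]$, only $\chi$ with coefficients in $\mathbb{F}_q$ can contribute; the vanishing theorem (Theorem~\ref{vanishing}) then shows that, for $(e,n)=1$, every term with $\chi\neq (t-\alpha)^n$ vanishes. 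So the geometric side reduces to the \emph{unipotent} contribution, not to regular semisimple elliptic orbital integrals as you assert. In particular there is no sum indexed by rational regular semisimple classes with ``characteristic polynomial $o_v$ at $v$'' to match with a spectral-cover count, and no fundamental-lemma-type local identity of the kind you sketch is what is being proved.

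The way $o$ actually enters is via Springer's hypothesis (Proposition~\ref{ft}(3)): the restriction of $e_{\rho_v}$ to unipotent elements equals, up to an explicit power of $q$, the Fourier transform of $\mathbbm{1}_{\wp_v^{-d_v-1}\Omega(o_v)}$ evaluated at $u-1$, and this is precisely where the similarity condition on $o$ is used --- it guarantees that $o_v$ lives in (the image of) the torus $T_v$ defining $\rho_v$. One then passes to the Lie algebra, applies the infinitesimal trace formula \eqref{ITF} (a global Fourier identity), and interprets the resulting $J^{\ggg,e}(h)$ directly as a groupoid point count of semistable logarithmic Higgs bundles with residue $o$ via Weil's ad\`ele/vector-bundle dictionary (Theorem~\ref{B3}), with Arthur's truncation matching semistability. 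No spectral-curve mass formula or term-by-term orbital-integral comparison is used; the power $q^{-\frac{1}{2}\dim\mathfrak{M}_n^e(o)}$ falls out of the volume factors in Proposition~\ref{ft} together with \eqref{ITF}.
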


It is a corollary of the theorem that the number $|\mathfrak{M}_n^{e}(o)(\mathbb{F}_{q^k}) |$ ($k\geq 1$) does not depend on the choice of $o$ that is similar to $\mathfrak{R}$. Therefore, for any $S$,  $|E_n^{irr}(\mathfrak{R})^{\Frob^{*k}}|$ depends only on the similarity class of $ \mathfrak{R}$ at least when such an $o$ exists.  

 If $k$ divisible enough so that $S$ and $o$ splits over $\mathbb{F}_{q^k}$, then $|\mathfrak{M}_n^{e}(o)(\mathbb{F}_{q^k})|$ is explicit by a result of A. Mellit (see Theorem \ref{AL} and comments below that theorem). His result suggests that $|E_n^{irr}(\mathfrak{R})^{\Frob^{*k}}|$ depends only on the Frobenius action on $(\closure{S}, (\mathfrak{R}_{x})_{x\in\closure{S}})$, but not on the inclusion $S\subseteq X$, nor the explicit description of $\mathfrak{R}$. In other words, we expect that the counting depends only on the isomorphism class of $(\closure{S}, (\mathfrak{R}_{x})_{x\in\closure{S}})$ with $\Frob$-action.

When $g=0$ and $\deg S\leq 2$, we have $\mathfrak{M}_n^{e}(o)=\emptyset$. Otherwise, we have  (see the discussion after \eqref{525}) \[ \dim  \mathfrak{M}_n^{e}(o) =2((g-1)n^2+1+\deg S\frac{n^2-n}{2}).\] 
Theorem \ref{THEO} proves Conjecture 2.15 (i) and (iii) of \cite{Deligne} in the generic case. 
Based on Theorem \ref{THEO}, we prove the following result which confirms Conjecture 6.3 of \cite{Deligne}. 

A $q$-Weil integer is an algebraic integer $\alpha$ such that for any field embedding $\mathbb{Q}(\alpha)$ in $\mathbb{C}$, the absolute value of $\alpha$ in $\mathbb{C}$ is $q^{m/2}$ for some $m\in \mathbb{N}$ independent of the embedding. 
\begin{atheorem}\label{malpha}
Under the condition of Theorem \ref{THEO},
there are $q$-Weil integers $\alpha$ and integers $m_\alpha$ such that 
\[ |E_n^{irr}(\mathfrak{R})^{\Frob^{*k}}| =|\Pic^{0}_X(\mathbb{F}_{q^k})|\sum_{\alpha} m_\alpha \alpha^k, \quad \forall k\geq 1.   \]
The $\alpha$ are monomials in $q$-Weil numbers of the curve $X$ and roots of unity. Here $q$-Weil numbers of the curve $X$ refer to the eigenvalues of the Frobenius action on $\ell$-adic cohomology of $\closure{X}$. 
\end{atheorem}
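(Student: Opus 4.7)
By Theorem \ref{THEO}, the claim reduces to establishing
\[ q^{-\frac{k}{2}\dim\mathfrak{M}_n^{e}(o)} |\mathfrak{M}_n^{e}(o)(\mathbb{F}_{q^k})| = |\Pic^0_X(\mathbb{F}_{q^k})| \sum_\alpha m_\alpha \alpha^k, \quad k \geq 1, \]
with $\alpha$ of the prescribed form. My approach is to apply the Grothendieck--Lefschetz trace formula to $\mathfrak{M}_n^{e}(o)$, which is a smooth quasi-projective $\mathbb{F}_q$-variety thanks to $(e,n)=1$: its point count equals $\sum_i (-1)^i \Tr(\Frob^k \mid H^i_c(\mathfrak{M}_n^{e}(o)_{\bar{\mathbb{F}}_q}, \overline{\mathbb{Q}}_\ell))$, and Deligne's purity theorem guarantees that each Frobenius eigenvalue is a $q$-Weil integer. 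The task becomes identifying these eigenvalues, after division by $q^{k\dim/2}$, as monomials in Weil numbers of $X$ and roots of unity, with the $|\Pic^0_X(\mathbb{F}_{q^k})|$ factor appearing naturally.

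The Jacobian factor should arise from the action of $J := \Pic^0_X$ on $\mathfrak{M}_n^{e}(o)$ by $L \cdot (\mathcal{E}, \varphi) = (\mathcal{E} \otimes L, \varphi)$. Because $(e,n)=1$, the $J$-equivariant determinant morphism $\mathfrak{M}_n^{e}(o) \to \Pic^e(X)$, equivariant for the $L \mapsto L^n$ action on the target (which is therefore surjective), exhibits $\mathfrak{M}_n^{e}(o)$ --- after fixing an auxiliary $L_0 \in \Pic^e(X)$ over a finite extension and descending --- as the \'etale $J[n]$-quotient of $\mathfrak{M}_n^{e,L_0}(o) \times J$ with diagonal action. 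Taking $\ell$-adic cohomology and decomposing $J[n]$-isotypically yields a K\"unneth-type splitting in which the trivial-character summand contains the full $H^*(J, \overline{\mathbb{Q}}_\ell)$, contributing the factor $|\Pic^0_X(\mathbb{F}_{q^k})|$ upon taking Frobenius trace; non-trivial isotypic summands, grouped into Galois orbits in $\widehat{J[n]}$, contribute Frobenius eigenvalues of the form (monomial in Weil numbers of $X$)$\,\cdot\,$(root of unity produced by the permutation of $\widehat{J[n]}$ under $\Frob$).

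To identify the Frobenius eigenvalues on $H^*(\mathfrak{M}_n^{e,L_0}(o), \overline{\mathbb{Q}}_\ell)$ as monomials in Weil numbers of $X$ and roots of unity, I would appeal to Mellit's explicit formula referenced after Theorem \ref{THEO}, valid when $k$ is divisible enough that $S$ and $o$ split over $\mathbb{F}_{q^k}$: Mellit's formula expresses the point count as a polynomial in $q^k$ and the Weil numbers $\omega_i^k$ of $X$. Transferring from divisible $k$ to all $k \geq 1$ exploits the rationality of the zeta function of $\mathfrak{M}_n^{e,L_0}(o)$ over $\mathbb{F}_q$ together with Galois descent, which forces the individual Frobenius eigenvalues to have the claimed form.

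The main obstacle is precisely this descent step. Mellit's formula supplies explicit information only for divisible $k$, whereas the statement demands a decomposition valid for every $k \geq 1$; the roots of unity in the final expression are produced by the $\Frob$-permutation of the punctures $\closure{S}$ and of the irreducible factors of the $\mathfrak{R}_x$ encoded in the similarity partition data of the preceding definition, and matching these to the character action of $\widehat{J[n]}$ requires careful bookkeeping. A clean alternative that may simplify the eigenvalue analysis is to use the Bia{\l}ynicki--Birula stratification induced by the $\mathbb{G}_m$-action dilating Higgs fields: this reduces the cohomological computation to that of the nilpotent cone, where the weight filtration and the Frobenius eigenvalues are more directly accessible.
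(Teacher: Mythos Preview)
Your overall architecture---Grothendieck--Lefschetz plus Deligne's purity, a Picard action to extract the $|\Pic^0_X(\mathbb{F}_{q^k})|$ factor, and Mellit's formula for the identification of eigenvalues---matches the paper's strategy. Your $J[n]$-quotient route to the Jacobian factor is a legitimate alternative to the paper's citation of Deligne's argument \cite[6.4, 6.5]{Deligne}: once you note that $\Pic^e_X(\mathbb{F}_q)\neq\emptyset$ (always true over a finite field) and that translation by $J[n]$ acts trivially on $H^*(J)$, K\"unneth gives the factorisation directly. The paper instead uses Deligne's result that the equivariant pushforward $R^i f_!\mathbb{Q}_\ell$ along the determinant map is smooth with $\pi_1(\closure{A})$-invariants constant, which yields \eqref{mneof} in one stroke.

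There are, however, two genuine gaps. First, you never establish that the $\alpha$ are $q$-Weil \emph{integers}, i.e.\ that every Frobenius eigenvalue on $H^*_c(\mathfrak{M}_n^e(o))$ is divisible by $q^{\frac{1}{2}\dim\mathfrak{M}_n^e(o)}$. Your proposed remedy, Bia\l ynicki--Birula on $\mathfrak{M}_n^e(o)$, does not work: the $\mathbb{G}_m$-dilation on Higgs fields scales the residue, so it does \emph{not} preserve the fixed-residue locus $\mathfrak{M}_n^e(o)$ when $o$ is regular (hence nonzero), and there is no nilpotent cone inside it. The paper's essential maneuver here is to pass, over a large enough $\mathbb{F}_{q^m}$, from $\mathfrak{M}_n^e(o)$ to the parabolic moduli $\mathcal{M}_{n,S}^{e',\xi}$ via Theorems \ref{THEO} and \ref{AL}; the latter \emph{does} carry the $\mathbb{G}_m$-action (parabolic structures are flags, not eigenvalues), and Theorem \ref{PGm} then gives the required divisibility.

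Second, the descent from ``$k$ divisible enough'' to all $k\geq 1$ is not a formal consequence of zeta-function rationality: knowing the point count for $k$ in an arithmetic progression constrains but does not determine the individual Frobenius eigenvalues over $\mathbb{F}_q$, and your remark about roots of unity coming from the Frobenius permutation of $\closure{S}$ and of $\widehat{J[n]}$ is suggestive but not a proof. The paper handles this by combining Mellit's polynomial expression (which a priori allows negative powers of $q$) with \cite[Th\'eor\`eme C.1]{Yu1} to pin down the eigenvalues as $q^{\dim/2}$ times monomials in the Weil numbers of $X$.
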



\subsection{The split case: revisit Arinkin's case}
Suppose that $S\subseteq X(\mathbb{F}_q)$, i.e. the degree of each point $x$ in $S$ is $1$.  
Let $\xi=(\xi_x)_{x\in S}\in (\mathbb{Q}^n)^S$ be a set of parabolic weights. Assume it is admissible and generic (see Definition \ref{adgen} and Definition \ref{admissible}). Let $\mathcal{M}_{n, S}^{e,\xi}$ be the moduli space of stable parabolic Higgs bundles over $X$ constructed by Yokogawa (see Section \ref{parabolic} for definitions). 
 In the literature, the parabolic Higgs bundles considered in the current article are sometimes also called strictly parabolic.

The following result (Theorem \ref{AL}) can be obtained by combining Arinkin's result, which expresses $|E_n^{irr}(\mathfrak{R})^{\Frob^{*k}}|$ in terms of the number of parabolic indecomposable vector bundles and J.-A. Lin's equality \cite[Theorem 1.4]{Lin} between the latter and the number of parabolic Higgs bundles up to a power of $q$. Lin's equality is also done in greater generality by Dobrovolska, G., V. Ginzburg, and R. Trakvin in \cite{DGT}.  We give a different and direct proof.

In fact, the case treated by Arinkin is a bit more general which allows non-regular local monodromies as well. He assumes that each $\mathfrak{R}_x$ is semisimple, and its irreducible factors are fixed by Frobenius endomorphism. In the terminology of representations, it means that $\mathfrak{R}_x$ corresponds to a semisimple representation of $\hat{\mathbb{Z}}^{p'}(1)$ that factors through a quotient map $\hat{\mathbb{Z}}^{p'}(1)\rightarrow \kappa_v^{\times}$. 
 For more details, see Théorème 3.5 of \cite{Deligne}. 
Our regular assumption is that $\mathfrak{R}_x$ is a direct sum of pairwise different irreducible factors, while we allow Frobenius endomorphism permutes the factors of $\mathfrak{R}_x$. The representation of $\hat{\mathbb{Z}}^{p'}(1)$ corresponding to $\mathfrak{R}_x$ factors through 
the quotient map \( \hat{\mathbb{Z}}^{p'}(1) \rightarrow \kappa^\times \) for a degree $n$ extension $\kappa$  of $\kappa_v$.

\begin{atheorem}[Arinkin-Lin] \label{AL}
Suppose that $S\subseteq X(\mathbb{F}_q)$ is a finite set of $\mathbb{F}_q$-rational points. Suppose that for each $x\in {S}$, $\Frob^{*}$ fixes every irreducible factor of $\mathfrak{R}_x$.
 Let $(e,\xi)\in \mathbb{Z}\times (\mathbb{Q}^n)^{S}$ be admissible and generic (see Definition \ref{adgen} and Definition \ref{admissible}).
 Then \begin{equation}\label{EAL}
 |E_n^{irr}(\mathfrak{R})^{\Frob^{*k}}| = q^{-\frac{k}{2} \dim \mathcal{M}_{n, S}^{e,  {\xi}} }|\mathcal{M}_{ n, S}^{e,  {\xi}}(\mathbb{F}_{q^k})|, \quad \forall k\geq 1. \end{equation}
\end{atheorem}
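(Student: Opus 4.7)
My plan is to deduce Theorem \ref{AL} from Theorem \ref{THEO} by constructing, in the split case, a dimension-preserving bijection between the $\mathbb{F}_{q^k}$-points of $\mathcal{M}_{n,S}^{e,\xi}$ and those of $\mathfrak{M}_n^e(o)$ for a suitable $o$ similar to $\mathfrak{R}$. In the split $\Frob^{*}$-fixed situation each character $\varepsilon_{x,i}$ appearing in $\mathfrak{R}_x=\bigoplus_i\varepsilon_{x,i}$ factors through $\hat{\mathbb{Z}}^{p'}(1)\twoheadrightarrow\mathbb{F}_q^{\times}$ and so corresponds to an element of $\mathbb{F}_q^{\times}$. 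Lifting these elements to $\lambda_{x,i}\in\mathbb{F}_q$ in a way compatible with the ordering of the generic admissible weights $\xi_{x,i}$, I form the monic polynomials $o_v(z)=\prod_i(z-\lambda_{x,i})\in\mathbb{F}_q[z]$. Pairwise distinctness of the $\varepsilon_{x,i}$ gives distinct roots of $o_v$, and the product relation $\prod_{x,i}\varepsilon_{x,i}=1$ becomes the trace vanishing condition in the similarity definition, so $o=(o_v)_{v\in S}$ is similar to $\mathfrak{R}$.

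Next I identify $\mathcal{M}_{n,S}^{e,\xi}$ with $\mathfrak{M}_n^e(o)$, up to $\mathbb{F}_{q^k}$-points, via the classical diagonal-shift dictionary between strictly parabolic Higgs bundles and logarithmic Higgs bundles with regular semisimple prescribed residue. Concretely, a parabolic $(\mathcal{E},\varphi,F_\bullet)\in\mathcal{M}_{n,S}^{e,\xi}$ is sent to the pair $(\mathcal{E},\widetilde{\varphi})$, where $\widetilde{\varphi}$ is obtained from $\varphi$ by shifting the residue at each puncture $v$ by the diagonal operator with eigenvalues $\lambda_{x,i}$ along the flag; the new residue is regular semisimple with characteristic polynomial $o_v$. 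Conversely, any logarithmic Higgs bundle in $\mathfrak{M}_n^e(o)$ has regular semisimple residue whose canonical eigenspace decomposition, once ordered via $\lambda_{x,i}\leftrightarrow\xi_{x,i}$, produces a canonical full flag from which the shift is reversed. The underlying bundle $\mathcal{E}$ and its degree $e$ are preserved, both moduli have dimension $2((g-1)n^2+1+|S|\cdot n(n-1)/2)$, so plugging $|\mathcal{M}_{n,S}^{e,\xi}(\mathbb{F}_{q^k})|=|\mathfrak{M}_n^e(o)(\mathbb{F}_{q^k})|$ into Theorem \ref{THEO} yields \eqref{EAL}.

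The main obstacle is the matching of stability in the second step. Although the diagonal-shift correspondence is classical in the complex-analytic setting, verifying that in positive characteristic the generic parabolic stability chamber for $\xi$ transports to the stability condition used in $\mathfrak{M}_n^e(o)$ is delicate: one must show that no subobject destabilises on one side without destabilising on the other, which uses both the genericity of $\xi$ and $(e,n)=1$ to avoid strictly semistable phenomena on both sides of the correspondence. A secondary point is the $\Frob^{*}$-equivariance of the canonical-flag construction, which in the split case follows from all eigenvalues $\lambda_{x,i}$ lying in $\mathbb{F}_q$, but should still be written out carefully to ensure the bijection descends correctly to $\mathbb{F}_{q^k}$-points for every $k\geq 1$.
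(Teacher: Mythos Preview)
Your strategy---deduce Theorem~\ref{AL} from Theorem~\ref{THEO} via a direct bijection $\mathcal{M}_{n,S}^{e,\xi}(\mathbb{F}_{q^k})\cong\mathfrak{M}_n^e(o)(\mathbb{F}_{q^k})$---is different from the paper's. The paper proves Theorem~\ref{AL} independently, by running the same trace-formula machinery with the extra parameter~$\xi$: in the split case each $\rho_v$ is a principal series, so $e_\rho$ may be replaced by an Iwahori-level function $e_{\widetilde{\rho}}$; then Theorem~\ref{vanishing} (now with~$\xi$) kills all but the unipotent term, Fourier transform and Springer's hypothesis convert this into the nilpotent Iwahori test function~$f$ of Theorem~\ref{B3}(2), and that theorem yields the parabolic-Higgs count directly. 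The equality of point counts you want to use is stated in the paper only as a \emph{consequence} of Theorems~\ref{THEO} and~\ref{AL} combined, not as an input.

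The proposed bijection has a real gap. The ``diagonal shift along the flag'' is not well defined: a full flag alone does not determine an operator acting by $\lambda_{x,i}$ on its graded pieces (that needs a splitting of the flag, which is non-canonical), and there is in general no global section $\psi\in H^0(X,\mathcal{E}nd(\mathcal{E})(D))$ with arbitrarily prescribed residues to add to~$\varphi$. The inverse direction fails more visibly: a regular semisimple residue $\widetilde{\varphi}_x$ is its own semisimple part, so subtracting the diagonal in the eigenbasis returns~$0$, and you never recover a parabolic Higgs bundle whose residue is a \emph{nonzero} nilpotent; using any other flag-adapted basis gives a nonzero but basis-dependent answer. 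I am not aware of an elementary algebraic isomorphism between these two moduli spaces that preserves the underlying bundle; the equality of their point counts is genuinely a trace-formula fact here. A secondary issue: your reduction requires $(e,n)=1$ to invoke Theorem~\ref{THEO}, whereas Theorem~\ref{AL} is asserted for every admissible generic pair $(e,\xi)$ with $S\neq\emptyset$.
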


Theorem \ref{AL} and Theorem \ref{THEO} implies that if each $o_v$ splits into linear polynomials,  then $|\mathfrak{M}_n^{e'}(o)(\mathbb{F}_{q^k})|= |\mathcal{M}_{ n, S}^{e,  {\xi}}(\mathbb{F}_{q^k})|$ for all $k\geq 1$, all $e'$ coprime to $n$ and all dmissible and generic pair $(e,  {\xi})$.

Mellit \cite[Corollary 7.9]{Mellit} has provided an explicit formula for the quantity on the right-hand side of the equation \eqref{EAL}.  His result enables us to get more information on the numbers $\alpha$ and $m_\alpha$  in Theorem \ref{malpha}.

The moduli space $\mathcal{M}_{n, S}^{e,  {\xi}} $ admits a $\mathbb{G}_m$ action via dilation on the Higgs field. Let $(\mathcal{M}_{n, S}^{e,  {\xi}})^{\mathbb{G}_m}$ be the fixed point variety. 
We then prove the following result. 
\begin{atheorem}\label{1.4}
Under the condition of Theorem \ref{AL}, for any $k\geq 1$, we have
\begin{equation}\label{OKAY}
|E_n^{irr}(\mathfrak{R})^{\Frob^{*k}}| =|(\mathcal{M}_{n, S}^{e,  {\xi}})^{\mathbb{G}_m}(\mathbb{F}_{q^k})|. \end{equation}
\end{atheorem}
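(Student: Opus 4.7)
The plan is to combine Theorem~\ref{AL} with a Białynicki-Birula style decomposition of $\mathcal{M}:=\mathcal{M}_{n,S}^{e,\xi}$. Set $d:=\dim\mathcal{M}$. By Theorem~\ref{AL}, the desired identity \eqref{OKAY} is equivalent to the point-count identity
\[ |\mathcal{M}(\mathbb{F}_{q^k})| \;=\; q^{\frac{k}{2}d}\,\bigl|\mathcal{M}^{\mathbb{G}_m}(\mathbb{F}_{q^k})\bigr|. \]
By admissibility and genericity of $(e,\xi)$, semistability coincides with stability on $\mathcal{M}$, so $\mathcal{M}$ is smooth, carries an $\mathbb{F}_q$-rational algebraic symplectic form $\Omega$ (Yokogawa), and the $\mathbb{G}_m$-action $(\mathcal{E},\varphi)\mapsto(\mathcal{E},t\varphi)$ is $\mathbb{F}_q$-rational.

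First, I would verify that $\lim_{t\to 0}t\cdot x$ exists in $\mathcal{M}$ for every geometric point $x$. The parabolic Hitchin map $h$ is proper and sends $(\mathcal{E},t\varphi)$ to a rescaling of $h(\mathcal{E},\varphi)$ by strictly positive integer weights; hence the composition of $t\mapsto t\cdot x$ with $h$ extends over $t=0$ on the Hitchin base, and the valuative criterion applied to $h$ produces the limit in $\mathcal{M}$. Standard Białynicki-Birula theory for smooth varieties equipped with such an attracting $\mathbb{G}_m$-action then gives a locally closed stratification
\[ \mathcal{M}=\bigsqcup_F W_F^+ \]
indexed by the connected components $F\subseteq\mathcal{M}^{\mathbb{G}_m}$, with each $W_F^+$ a Zariski-locally trivial affine bundle of rank $d_F^+$ over $F$, where $d_F^+$ is the dimension of the strictly positive weight subspace of the tangent representation of $\mathbb{G}_m$ at any $x\in F$. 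Because the action and the fixed locus are $\mathbb{F}_q$-rational, this stratification is $\Frob$-equivariant.

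The core calculation is to show that $d_F^+ = d/2$, independently of $F$. Writing $m_\lambda := \dim T_x\mathcal{M}(\lambda)$ for the multiplicity of weight $\lambda$ at $x\in F$, one has $m_0 = \dim F$. Since $t^\ast \Omega = t\,\Omega$, the symplectic form pairs $T_x\mathcal{M}(\lambda)$ non-degenerately with $T_x\mathcal{M}(1-\lambda)$, yielding $m_\lambda = m_{1-\lambda}$. Because $\mathbb{G}_m$ acts algebraically the weights are integers, and re-indexing $\lambda \mapsto 1-\lambda$ gives
\[ d_F^+ \;=\; \sum_{\lambda\geq 1} m_\lambda \;=\; \sum_{\lambda\leq 0} m_{1-\lambda} \;=\; m_0 + d_F^-; \]
combined with $d = d_F^+ + d_F^- + m_0$ this forces $d = 2 d_F^+$.

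Finally, by $\Frob$-equivariance and the affine bundle structure, $|W_F^+(\mathbb{F}_{q^k})| = q^{k d_F^+}|F(\mathbb{F}_{q^k})| = q^{\frac{k}{2}d}|F(\mathbb{F}_{q^k})|$, and summing over the finitely many components of $\mathcal{M}^{\mathbb{G}_m}$ yields the required identity. I expect the main obstacle to lie in carefully setting up the Białynicki-Birula decomposition in the quasi-projective parabolic-Higgs setting, in particular the existence, smoothness and $\mathbb{F}_q$-rationality of the attracting strata; once that is secured, the symplectic weight computation completes the proof essentially formally.
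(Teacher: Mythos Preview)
Your proposal is correct and follows essentially the same route as the paper: the paper derives Theorem~\ref{1.4} by combining Theorem~\ref{AL} with Theorem~\ref{PGm}, whose proof is exactly the Bia\l{}ynicki-Birula decomposition plus the weight-$1$ quasi-symplectic argument you outline (the paper phrases the tangent-space step as ``$T_x^+$ is Lagrangian'' rather than your explicit weight-count, but these are equivalent). One small correction: the symplectic form is not due to Yokogawa but to Biswas--Ramanan over $\mathbb{C}$, and the paper spends some effort reconstructing it over $\mathbb{F}_q$ (handling the absence of a global Poincar\'e family via \'etale-local descent and verifying quasi-symplecticity and weight-$1$ equivariance by explicit \v{C}ech computation); this is precisely the ``main obstacle'' you anticipate at the end.
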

A natural question arising from this result is whether a natural bijection exists between the two sets in the equality \eqref{OKAY}. However, working directly with the viewpoint of $\ell$-adic local systems or $\ell$-adic representations can be challenging. An alternative approach is to construct motivic local systems or to use Abe's theorem \cite{Abe} to move to the $p$-adic setting. Recently, Yang and Zuo \cite{YZ} have tackled this problem for rank $2$ $\ell$-adic local systems over $X-S=\mathbb{P}^1-\{0,1,\infty, \lambda\}$ with some prescribed tame generic and quasi-unipotent ramifications. We also draw the reader's attention to Deligne's proposals in \cite{IHES} and \cite{Deligne}.

\subsection{Main ingredients of the proof}
We provide a brief overview of the proof which employs methods from the theory of automorphic forms.  

It is sufficient to consider the case $k=1$ in Theorem \ref{THEO} and Theorem \ref{1.4}, the general case is deduced by base change (arguments for this are provided in the begining of Section \ref{Compar}). 

Starting with the Langlands correspondence established by Drinfeld and Lafforgue, and C. Bushnell and G. Henniart's result on the explicit local Langlands correspondence, we reduce the question to a counting problem of certain cuspidal automorphic representations of $GL_n(\AAA)$, where $\mathbb{A}$ is the ring of adèles for the function field $F=\mathbb{F}_q(X)$. To be a bit more precise, Bushnell-Henniart's result enables us to construct an irreducible representation $\rho$ of $GL_n(\mathcal{O})$ (Proposition \ref{623}), constructed via Deligne-Lusztig theory, so $|E_n^{irr}(\mathfrak{R})^{\Frob^\ast}|$ equals the number of inertial equivalence classes of automorphic cuspidal representations $\pi$ of $GL_n(\AAA)$ that contain $\rho$: $\pi_\rho\neq 0$. Here $\mathcal{O}$ is the ring of integral adèles (see Theoerm \ref{fixat}).

To address this counting problem on the automorphic side, we use the Arthur-Selberg trace formula, which has been established over a function field by L. Lafforgue.
We use the character (i.e. the trace function) of the contragredient of $\rho$ as the test function. 
Under the generic condition, the spectral side of the trace formula is very simple. It simply equals the counting (Theorem \ref{Einfinitesimal}). 

For the geometric side of the trace formula, we exploit the fact that the test function has small support (is contained in $GL_n(\mathcal{O})$). We use a coarse expansion following the characteristic polynomials of elements in $GL_n(F)$. 
Inspired by a result of Chaudouard, we find that in the generic case, most terms in the coarse expansion of the geometric side of the trace formula vanish, except for essentially unipotent ones (Theorem \ref{vanishing}). We hence reduce to treat the unipotent part of the trace formula.

The main idea is to use a trace formula for Lie algebra.  After introducing the Hitchin moduli space in Section \ref{B}, we finish the proof in Section \ref{F5}.
We identify the unipotent term of the Arthur-Selberg-Lafforgue trace formula with a nilpotent term in a trace formula for Lie algebra. Through a Fourier transform and the use of Kazhdan-Springer's hypothesis, it is further related to a Lie algebra trace formula for a relatively simple test function. Finally, we interpret this latter Lie algebra trace formula as a point counting of logarithmic Higgs bundles with fixed residues, utilizing Weil's dictionary between adèles and vector bundles (Theorem \ref{B3}). Throughout the proof, we observe that Arthur's truncation is the same as semistability on the Higgs bundles, which can be demonstrated through combinatoric arguments.

We need to introduce the moduli space of parabolic Higgs bundles to state Theorem \ref{AL}, and we also need Theorem \ref{PGm} about it to prove Theorem \ref{malpha}. 
In order to establish a connection with parabolic Higgs bundles, we must first modify the trace formula in Arthur's construction by introducing an additional parameter $\xi$ that serve as parabolic weights in the construction of the moduli space of parabolic Higgs bundles. This modification has been carried out in \cite{Yu2}. We then demonstrate that, in the generic case, the coarse expansion introduced in \textit{op. cit.} still yields a similar vanishing result (Theorem \ref{vanishing}). 

The proof of Theorem \ref{malpha} needs Theorem \ref{THEO}, Theorem \ref{AL}, and Theorem \ref{1.4}. It is completed in Section \ref{finalsection}.

Although the main tool we use in this article is Arthur's trace formula as in \cite{Yu1} where the main difficulty lies in the spectral side of the trace formula, the focus here is quite different. In this paper, the genericity condition allows us to completely reduce the difficulty from the spectral side of the trace formula at the cost of increasing the complexity of the geometric side. 

\subsection{Acknowledgements}
I thank Erez Lapid for the discussions and for his valuable suggestions. I thank Peiyi Cui for providing me with a reference. I thank several anonymous referees, whose comments helped me to improve this manuscript.  
This article is written while the author was at the Weizmann Institute of Science and was supported by the grant BSF 2019274.

\section{Global and local Langlands correspondences}
We continue to use the notation in Section \ref{11C}. 
In this section, we reduce the calculation of the cardinality of $E_n^{irr}(\mathfrak{R})^{\Frob^{*}}$ to a question of counting certain automorphic representations of $GL_n$ through the global Langlands correspondence established by Lafforgue \cite{Lafforgue}. The local monodromies can also be transferred to the automorphic side, described by the local Langlands correspondence established by Laumon, Rapoport, and Stuhler \cite{LRS}. The tame case, the only case we are concerned with, has been made explicit by Bushnell and Henniart in their works \cite{BH} and \cite{BH3}.

\subsection{General notation}
We define $G$ to be the group scheme $GL_n$ over $\mathbb{Z}$.  Let $F=\mathbb{F}_q(X)$ denote the function field of $X$. Let $|X|$ be the set of closed points of $X$. We identify $|X|$ with the set of places of $F$. For each place $v\in |X|$, let $F_v$ be the local field of $F$ in $v$, $\mathcal{O}_v$ the integer ring of $F_v$ and $\kappa_v$ the residue field of $\mathcal{O}_v$.
We fix a uniformizer $\wp_v$ for the maximal idea of $\mathcal{O}_v$. 
For each $m\geq 1$, $\mathbb{F}_{q^m}$ denotes the sub-field with $q^m$ elements in $\closure{\mathbb{F}}_q$. Let $\mathbb{A}$ be the ring of adèles and $\mathcal{O}=\prod_v\mathcal{O}_v$. The degree map of $\mathbb{A}^\times $ is defined by
\[\deg ((x_v)_{v\in |X|}) = -\sum v(x_v)[\kappa_v:\mathbb{F}_q], \]
the valuations $v$ are normalized to be surjective to $\mathbb{Z}$.

An $\ell$-adic representation is a continuous $\closure{\mathbb{Q}}_{\ell}$-linear representation of finite dimension. We fix an isomorphism: \[ \iota: \closure{\mathbb{Q}}_{\ell}\overset{\sim}{\longrightarrow} \mathbb{C}. \]

\subsection{The set $E_n^{irr}(\mathfrak{R})^{\Frob^\ast}$ in terms Weil group representations}
\label{S2.2}

Below is a brief summary of the material in sections 2.7-2.9 of \cite{Deligne}. We refer to \cite[Section 2.7-2.9]{Deligne} and \cite[Section 2.1]{Yu1} for more details. 

Let $F_1=F\otimes_{\mathbb{F}_q} \closure{\mathbb{F}}_q$ be a field extension of $F$. 
Let $\closure{F}$ be an algebraic closure of $F_1$. The algebraic closure $\closure{F}$ of $F$ defines a geometric point $\closure{\eta}=\Spec(\overline{F})$ of $\closure{X}$ that lies over the generic point of $\closure{X}$.

Let $F_{1, (x)}$ be the function field of  $\closure{X}_{(x)}^{\ast}$, so we have
\[ \closure{X}_{(x)}^{\ast}=\Spec(F_{1, (x)}) . \]
Fix a point $x_0$ lying over $v\in S$, we fix an embedding $F_{1, (x_0)}$ in $\closure{F}$ which defines a geometric point $\closure{\eta}=\Spec(\closure{F})$ of $\closure{X}_{(x_0)}^{\ast}$. Since the points of $\closure{X}$ that lie over $v$ form a $\Frob$-orbit, the Frobenius endomorphism induces a geometric point of $\closure{X}_{(x)}^{\ast}$ for every $x$ lying over $v$. 
The tame fundamental group $\pi_1(\closure{X}_{(x)}^{\ast}, \closure{\eta})^{t}$ is isomorphic to 
\( \hat{\mathbb{Z}}^{p'}(1)\). 
 The Frobenius endomorphism: $\Frob: \closure{X}_{(x)}^{\ast}\rightarrow \closure{X}_{(\Frob(x))}^{\ast}$ induces a morphism between tame fundamental groups that is multiplication by $q$ map on $ \hat{\mathbb{Z}}^{p'}(1)$.


The tame family $(\mathfrak{R}_x)_{x\mid v}$ over $(\closure{X}_{(x)}^{\ast})_{x\mid v}$ is equivalent to a family of $\ell$-adic representations $(\sigma_x)_{x\mid v}$ of rank $n$ of $\hat{\mathbb{Z}}^{p'}(1)$. 
 Let $\closure{F}_{v}$ be an algebraic closure of $F_v$ and embed $\closure{F}$ into $\closure{F}_v$.  
Let $W_F$ be the Weil group of $F$, and $W_{F_v}$ be the local Weil group of $F_v$ and $I_{F_v}$ the inertial subgroup of $F_v$. We have an embedding $W_{F_v}\rightarrow W_F$ given by the inclusion $\closure{F}\subseteq \closure{F}_v$.   
We say that two $\ell$-adic representations of $W_{F_v}$ are inertially equivalent if their restrictions to $I_{F_v}$ are isomorphic. 
The hypothesis that the isomorphism class of $(\mathfrak{R}_{x})_{x\in \closure{S}}$ is fixed  by Frobenius endomorphism:
\[\Frob^{\ast}\mathfrak{R}_{\Frob(x)}\cong \mathfrak{R}_{x}, \quad \forall x\in \closure{S}, \]
implies that $(\sigma_x)_{x\mid v}$ defines an inertial equivalence class $\mathfrak{R}_v$ of $\ell$-adic representations of rank $n$ of $W_{F_v}$ for every $v\in S$. In fact, the closure $F_{1, x_0}$ of $F_{1, (x_0)}$ in $\closure{F}_v$ is the maximal unramified extension of $F_{v}$ inside $\closure{F}_v$, we have
\[  I_{F_v}\cong \pi_{1}(\closure{X}_{(x_0)}^{\ast}, \closure{\eta}), \]
and the inertial class is determined by  \( \sigma_{x_0}\), which can be extended to a representation of $W_{F_v}$.

We define two $\ell$-adic representations $\sigma_1$ and $\sigma_2$ of $W_F$ to be inertially equivalent if their restrictions to $W_F^{0}$, the degree $0$ subgroup of $W_F$, are isomorphic. If $\sigma_1|_{W_F^0}$ is irreducible, then $\sigma_1$ is inertially equivalent to $\sigma_2$ if and only if there exists a character $\chi$ of $W_F$ that factors through the degree map, such that \[ \sigma_1\cong \sigma_2\otimes \chi. \]
\begin{prop}\label{weilrep} 
The set $E_n^{irr}(\mathfrak{R})^{\Frob^\ast}$ is in bijection with the set of inertial equivalence classes of $\ell$-adic irreducible representations of rank $n$ of $W_F$, satisfying the following conditions: their restrictions to $W_{F_v}$ belong to the inertial equivalence class $\mathfrak{R}_v$ for every $v\in S$, and is trivial on $I_{F_v}$ for every $v\notin S$. \end{prop}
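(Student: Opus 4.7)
My strategy is to invoke the standard dictionary translating $\ell$-adic local systems on $\closure{X}-\closure{S}$ into continuous $\ell$-adic representations of $\pi_1(\closure{X}-\closure{S},\closure\eta)$, and to match the Frobenius-fixed condition on the geometric side with the passage from geometric representations to Weil group representations of $W_F$ on the Galois side.

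\textbf{Step 1: local system side.} First I identify $E_n(\mathfrak{R})$ with the set of isomorphism classes of continuous $n$-dimensional $\closure{\mathbb{Q}}_\ell$-representations $\rho$ of $\pi_1(\closure{X}-\closure{S},\closure\eta)$ whose restriction along each inclusion $\pi_1(\closure{X}_{(x)}^*, \closure\eta)\hookrightarrow \pi_1(\closure{X}-\closure{S},\closure\eta)$ is isomorphic to $\sigma_x$. The Frobenius acts on the geometric fundamental group, and pullback $\Frob^*\mathfrak{L}$ corresponds to precomposing $\rho$ with this action; by hypothesis \eqref{13} the resulting representation still has the prescribed local restrictions, so $E_n^{irr}(\mathfrak{R})^{\Frob^*}$ corresponds to the set of irreducible such $\rho$ satisfying $\rho\circ\Frob_*\cong\rho$, taken up to isomorphism.

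\textbf{Step 2: lifting to the arithmetic fundamental group.} I then use the short exact sequence
\[
1\longrightarrow \pi_1(\closure{X}-\closure{S},\closure\eta)\longrightarrow \pi_1(X-S,\closure\eta)\longrightarrow \Gal(\closure{\mathbb{F}}_q/\mathbb{F}_q)\longrightarrow 1.
\]
An irreducible $\rho$ with $\rho\circ\Frob_*\cong\rho$ extends to a continuous representation $\tilde\rho$ of $\pi_1(X-S,\closure\eta)$: one first extends to a dense subgroup generated by $\pi_1(\closure{X}-\closure{S},\closure\eta)$ and a geometric Frobenius lift using the intertwiner, and Schur's lemma guarantees that two such extensions differ by an unramified $\closure{\mathbb{Q}}_\ell^\times$-character factoring through $\Gal(\closure{\mathbb{F}}_q/\mathbb{F}_q)\cong\widehat{\mathbb{Z}}$. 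Restricting $\tilde\rho$ to the (dense) Weil subgroup $W_F\subset \pi_1(X-S,\closure\eta)$ produces a continuous irreducible $W_F$-representation, well-defined up to twist by a character of $W_F$ that factors through the degree map. This is precisely the inertial equivalence relation introduced in Section \ref{S2.2}.

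\textbf{Step 3: transfer of local conditions.} Because $\mathfrak{L}$ is defined on $\closure{X}-\closure{S}$, the extension $\tilde\rho$ is unramified at every $v\notin S$, i.e.\ $\tilde\rho|_{I_{F_v}}$ is trivial. For $v\in S$ and a chosen point $x_0\mid v$, the identification $I_{F_v}\cong \pi_1(\closure{X}_{(x_0)}^*,\closure\eta)$ recalled in Section \ref{S2.2} gives $\tilde\rho|_{I_{F_v}}\cong \sigma_{x_0}$, which says exactly that $\tilde\rho|_{W_{F_v}}$ belongs to the inertial class $\mathfrak{R}_v$. Conversely, any irreducible $\ell$-adic representation of $W_F$ with these properties has finite image of inertia (by tameness at $S$ and unramifiedness outside $S$), hence extends to $\pi_1(X-S,\closure\eta)$ and restricts to a representation of $\pi_1(\closure{X}-\closure{S},\closure\eta)$ fixed by $\Frob^*$; this yields the inverse map. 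The two constructions are mutually inverse on the level of the respective equivalence classes.

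\textbf{Main difficulty.} The principal delicate point is to verify that the indeterminacy in choosing the extension $\tilde\rho$ of $\rho$, which forms a $\mathrm{Hom}(\Gal(\closure{\mathbb{F}}_q/\mathbb{F}_q),\closure{\mathbb{Q}}_\ell^\times)$-torsor, matches precisely the notion of inertial equivalence for $W_F$-representations recalled in Section \ref{S2.2}; this is handled by Schur's lemma applied to the irreducible $\rho$ (whose irreducibility is guaranteed by the generic hypothesis \eqref{generic}), together with the observation that unramified characters of $W_F$ are exactly the ones factoring through the degree map, so no character-counting issue arises and the stated bijection follows.
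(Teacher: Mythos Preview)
Your proposal is correct and follows essentially the same route as the paper's sketch: translate local systems on $\closure{X}-\closure{S}$ into representations of the geometric fundamental group (equivalently $W_F^0$), use that Frobenius-fixedness is equivalent to extendability to $W_F$, and match the local constraints. Two small remarks are worth making. First, in your converse direction you claim that an irreducible $W_F$-representation with the prescribed local behaviour \emph{extends} to $\pi_1(X-S,\closure\eta)$; this is not automatic for $\ell$-adic (as opposed to profinitely continuous) representations, since Frobenius may act by a scalar that is not an $\ell$-adic unit. The detour through $\pi_1(X-S)$ is unnecessary here: simply restrict to $W_F^0$ directly, which already gives the desired element of $E_n(\mathfrak R)$. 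Second, the paper places the use of genericity in the converse direction---to guarantee that the restriction of an irreducible $W_F$-representation to $W_F^0$ remains irreducible, so that one lands in $E_n^{irr}(\mathfrak R)$ rather than merely $E_n(\mathfrak R)$---whereas you invoke it for Schur's lemma in the forward direction, where irreducibility is already part of the hypothesis. This is only a matter of emphasis; both uses are covered by the assumption $E_n^{irr}(\mathfrak R)=E_n(\mathfrak R)$.
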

\begin{proof}[Sketch of the proof]
An $\ell$-adic local system over an open set of $\closure{X}$ is equivalent to an $\ell$-adic representation of $W_F^{0}$. The isomorphism class of the local system is fixed by the Frobenius endomorphism if and only if the representation can be extended to a representation of $W_F$ (see \cite[Lemma 2.1.2]{Yu1}). The $\ell$-adic sheaf being smooth $\overline{X}-\overline{S}$ is equivalent to the representation being unramified for all $v\notin S$, and the ramification $\mathfrak{R}$ of the local system is translated into the prescribed ramifications $\mathfrak{R}_v$ for $v\in S$. Finally, note that we assume $\mathfrak{R}$ is generic. It guarantees that the restriction to $W_F^{0}$ of such a representation is automatically irreducible since $E_n^{irr}(\mathfrak{R})= E_n(\mathfrak{R})$.
\end{proof}


In the next section, we explain a result of Bushnell and Henniart to specify $\mathcal{R}_v$.

\subsection{Explicit tame local Langlands correspondence for \(GL_n\)}\label{Expli}

Let \[\mathcal{A}^{0}_n(F_v)_0 \] be the set of isomorphism classes of supercuspidal representations of $G(F_v)$ that are of depth zero (i.e., admitting a non-zero vector fixed by $1+\wp_v M_n(\mathcal{O}_v)$). Let \[\mathcal{G}^{0}_n(F_v)_0\] be the set of isomorphism classes of irreducible tamely ramified $n$-dimensional representations of $W_{F_v}$ (i.e., representations that are trivial on the wild inertial subgroup of $I_{F_v}$).

A pair $(E/F_v, \chi)$ consisting of a separable field extension $E/F_v$ and a (complex) character $\chi$ of $E^{\times}$ is called an admissible (depth zero) pair if $E/F_v$ is unramified, $\chi|_{1+\wp_E}$ is trivial and the conjugates $\chi^{\sigma}$, $\sigma\in \Gal(E/F_v)$ are pairwise distinct. 
The set of isomorphism classes of admissible pairs such that $[E:F_v]=n$ is denoted by  \[P_n(F_v)_0. \]
We will establish two bijections $P_n(F_v)_0\rightarrow \mathcal{A}^{0}_n(F_v)_0$ and $P_n(F_v)_0\rightarrow \mathcal{G}^{0}_n(F_v)_0$. By using these bijections, we will express the local Langlands correspondence between \(\mathcal{A}^{0}_n(F_v)_0\) and  \(\mathcal{G}^{0}_n(F_v)_0\).  

We establish the bijection:
\begin{align*}
P_n(F_v)_0&\longrightarrow \mathcal{A}^{0}_n(F_v)_0, \\
(E/F_v,\chi)&\longmapsto \pi_\chi.
\end{align*}
 Bushnell and Henniart give this bijection in \cite[2.2]{BH}, where they use Green's parametrization instead of Deligne-Lusztig's construction. However, both constructions lead to the same result, as shown in \cite[2.1]{BH3} and \cite[4.2]{DL}.
Let $(E/F_v, \chi)\in P_n(F_v)_0$. Then $\chi|_{\mathcal{O}_E^{\times}}$ is the inflation of a character $\closure{\chi}$ of $\kappa_E^{\times}$.  
Let $U={R}_{\kappa_E/\kappa_v}\mathbb{G}_{m}$, the restriction of scalars of the torus $\mathbb{G}_{m}$ from $\kappa_{E}$ to $\kappa_v$. 
There is a unique conjugacy class of tori of  $G$ defined over $\kappa_v$ that is isomorphic to $U$. We can thus view $U$ as a torus of $G$, and we view $\closure{\chi}$ as a character of $U(\kappa_v)$. Then, it is in general position. The Deligne-Lusztig induced representation \[ (-1)^{n-1}\iota(R_{U(\kappa_v)}^{G(\kappa_v)}(\iota^{-1}\circ\closure{ \chi}))\]  is then irreducible and cuspidal. By inflation, we obtain a representation $\lambda_\chi$ of $G(\mathcal{O}_v)$. We extend $\lambda_\chi$ to a representation $\Lambda_\chi$ of $F_v^{\times}G(\mathcal{O}_v)$ by demanding that $\Lambda_\chi|_{F_v^{\times}}$ is a multiple of $\chi|_{F_v^\times}$. We set \[\pi_\chi=\mathrm{c\text{-}Ind}_{F_v^{\times}G(\mathcal{O}_{v})}^{G(F_v)}\Lambda_\chi,\]
then $\pi_\chi\in \mathcal{A}^{0}_n(F_v)_0$. And the map $(E/F_v,\chi)\mapsto \pi_\chi$ establishes a bijection between $P_n(F_v)_0$ and $\mathcal{A}^{0}_n(F_v)_0$.

The bijection \begin{align*}
P_n(F_v)_0&\longrightarrow \mathcal{G}^{0}_n(F_v)_0, \\
(E/F_v,\chi)&\longmapsto \sigma_\chi,
\end{align*}
is easy to define. Let $(E/F_v, \chi)\in P_n(F_v)_0$, it suffices to define $\sigma_\chi$ by \[\sigma_\chi=\Ind_{W_E}^{W_{F_v}}\chi,\]
where $\chi$ is viewed as a character of $W_E$ by local class field theory.  See \cite[A.3]{BH} for the fact that this is well-defined and is bijective.

The following result is obtained in Theorem 2 in Section 2.4 and Corollary 3 of Section 5.2 of \cite{BH3}. 
\begin{theorem}[Bushnell-Henniart]\label{BH}
The local Langlands correspondence induces a bijection between  \(\mathcal{G}^{0}_n(F_v)_0\) and  \(\mathcal{A}^{0}_n(F_v)_0\). Given an admissible pair $(E/F_v, \chi)$, the local Langlands correspondence sends $\sigma_{\chi}$ to $\pi_{\mu_n\chi}$ where \[\mu_n: E^\times \xrightarrow{\deg}\mathbb{Z}\longrightarrow \{\pm 1\}\]
is the unramified character of $E^\times$ that sends a uniformizer to $(-1)^{n-1}$. 
\end{theorem}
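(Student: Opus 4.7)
The plan is to identify the ``rectifying character'' that measures the failure of the naive correspondence $\sigma_\chi \mapsto \pi_\chi$ to agree with the local Langlands correspondence, and to show that it equals the unramified character $\mu_n$ in the depth-zero case. The local Langlands correspondence for $GL_n$ over a local function field is uniquely characterized (by the Henniart-Harris-Taylor type uniqueness theorems, available in characteristic $p$ via Laumon-Rapoport-Stuhler) by preservation of $L$-factors and $\varepsilon$-factors of pairs, together with compatibility with central characters, twists by characters, and contragredient. So the strategy is: for each admissible pair $(E/F_v,\chi)$, compare $L$- and $\varepsilon$-factors of $\sigma_\chi$ and $\pi_\chi$, extract the unique unramified tame character $\delta$ of $E^\times$ such that $\pi_{\delta\chi}$ matches $\sigma_\chi$ under these invariants, and identify $\delta = \mu_n$.

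Concretely, I would first compute both sides. On the Galois side, $\sigma_\chi = \Ind_{W_E}^{W_{F_v}} \chi$ is automatic: its $L$-factor is that of $\chi$ and its $\varepsilon$-factor obeys the inductivity formula of Deligne-Henniart, producing a product of a local constant $\lambda_{E/F_v}(\psi)$ and $\varepsilon(\chi,\psi_E)$. On the automorphic side, one uses the Bushnell-Kutzko theory of types together with the compact-induction formula \[\pi_\chi = \cInd_{F_v^\times G(\mathcal{O}_v)}^{G(F_v)} \Lambda_\chi\] to reduce the computation of $\varepsilon(s,\pi_\chi,\psi)$ to a Gauss sum for the Deligne-Lusztig character $(-1)^{n-1} R_{U(\kappa_v)}^{G(\kappa_v)}(\bar\chi)$. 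The ratio of the two $\varepsilon$-factors is then an unramified character of $E^\times$, evaluated on $\wp_v$; by explicit evaluation of the Gauss sum for a character in general position on the non-split torus ${R}_{\kappa_E/\kappa_v}\mathbb{G}_m$, this ratio equals $(-1)^{n-1}$, which is precisely $\mu_n(\wp_v)$.

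The main obstacle is the $\varepsilon$-factor computation on the automorphic side: one must compute the Whittaker/Kirillov integral (or equivalently a Macdonald-type character sum) for a depth-zero supercuspidal built from a Deligne-Lusztig induction from a Coxeter torus, and identify the resulting sign. Two copies of the Deligne-Lusztig sign $(-1)^{n-1}$ appear in the calculation, one packaged into the definition of $\lambda_\chi$ and one arising from the Gauss sum itself, and disentangling them is the technical heart of the argument; this is exactly what Bushnell-Henniart carry out in \cite{BH3}.

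Once the rectifier is identified, bijectivity is automatic: the bijections $P_n(F_v)_0 \to \mathcal{A}^0_n(F_v)_0$ and $P_n(F_v)_0 \to \mathcal{G}^0_n(F_v)_0$ are already established in the exposition, and $\chi \mapsto \mu_n\chi$ is a self-bijection of $P_n(F_v)_0$ (it only shifts $\chi|_{F_v^\times}$ by an unramified sign, which preserves admissibility). Hence the composite $\sigma_\chi \mapsto \pi_{\mu_n\chi}$ is a bijection from $\mathcal{G}^0_n(F_v)_0$ to $\mathcal{A}^0_n(F_v)_0$ that preserves all relevant invariants, so by Henniart's uniqueness theorem it coincides with the local Langlands correspondence restricted to the depth-zero tame supercuspidal part.
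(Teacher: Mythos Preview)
The paper does not prove this theorem at all: it is stated as a result of Bushnell--Henniart and is attributed directly to \cite{BH3} (Theorem 2 in Section 2.4 and Corollary 3 of Section 5.2), with no argument given. So there is nothing to compare your proposal against in the paper itself.

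Your sketch is a reasonable high-level outline of the Bushnell--Henniart strategy: identify the rectifier by comparing $\varepsilon$-factors on both sides and invoke the uniqueness characterization of the local Langlands correspondence. But as written it is only a plan, not a proof. The actual content---computing the automorphic $\varepsilon$-factor of a depth-zero supercuspidal via a Gauss sum for the Deligne--Lusztig character on the Coxeter torus, and extracting the sign $(-1)^{n-1}$---is precisely the nontrivial work carried out in \cite{BH3}, and you have not reproduced any of it; you simply assert the outcome and cite \cite{BH3} yourself. Given that the paper treats this as a black-box citation, the appropriate response here is not a proof sketch but simply to record the reference, which is what the paper does.
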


The local Langlands correspondence is a one-to-one correspondence between $n$-dimensional  complex semi-simple Deligne representations of $W_{F_v}$ and complex smooth irreducible representations of $G(F_v)$. The category of $\ell$-adic (continuous) Frobenius-semisimple representations of $W_{F_v}$ is equivalent to the category of complex semi-simple Deligne representations (via $\iota$). 
A detailed presentation of the extension of Langlands correspondence from the irreducible-supercuspidal case can be found in \cite[4.4]{Rodier}. In our specific case, i.e. $\mathfrak{R}_x$ are tame regular semi-simple, the correspondence is straightforward as the nilpotent operator in a Deligne representation is trivial, and we need the following result.

Let $v\in S$. We have fixed a point $x_0\in \closure{S}$ be the point lying over $v\in S$ at the beginning of Section \ref{S2.2}.
The $\ell$-adic representation $\sigma_{x_0}$ of $\hat{\mathbb{Z}}^{p'}(1)$ corresponding to $\mathfrak{R}_{x_0}$ is a direct sum of pairwise different characters of $\hat{\mathbb{Z}}^{p'}(1)$. Let $|\kappa_v|=q_v=q^{d_v}$. Since $\sigma_{x_0}$ is fixed by $\Frob^{\ast d_v}$, we can assume that  
\[\sigma_{x_0}=\bigoplus_{i=1}^{r} \bigoplus_{a=1}^{n_i}\chi_i^{q_v^a}, \]
for a partition $(n_1, \ldots, n_r)$ of $n$, where $n_i$ is the smallest positive integer such that $\chi_{i}^{q_v^{n_i}}=\chi_i$. Each $\chi_i$ factors through the quotient map \[ \hat{\mathbb{Z}}^{p'}(1)\longrightarrow \mathbb{F}_{q_v^{n_i}}^\times, \] 
so it defines a character $\closure{\chi}_i$ of $\mathbb{F}_{q_v^{n_i}}^\times$. Let $U$ be a maximal torus defined over  $\kappa_v$ in $GL_n$ such that 
\[ U(\kappa_v)\cong \prod_{i=1}^{r} \mathbb{F}_{q_v^{n_i}}^\times.  \]
The character $\closure{\chi}=\closure{\chi}_{1}\otimes \cdots\otimes\closure{\chi}_{r}$ of $U(\kappa_v)$ is in general position and the Deligne-Lusztig induced character 
\[ \iota((-1)^{n-r} R_{U(\kappa_v)}^{G(\kappa_v)} \closure{\chi}) \]
is the character of an irreducible representation $\rho_v$ of $G(\kappa_v)$. We denote the inflated representation of $G(\mathcal{O}_v)$ still by $\rho_v$. It is clearly well-defined.

\begin{prop}\label{623}
An irreducible smooth representation $\pi$ of $G(F_v)$ is  associated to an $\ell$-adic representation in $\mathfrak{R}_v$ (via $\iota$) under local Langlands correspondence if and only if
 \[\pi_{\rho_v}\neq 0.  \]
 Here $\pi_{\rho_v}$ is the $\rho_v$-isotypic subspace of $\pi$. 
\end{prop}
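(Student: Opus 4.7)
The plan is to translate both conditions into statements about Bernstein components, by giving an explicit Galois-theoretic description of $\mathfrak{R}_v$ and then verifying that the inflation of $\rho_v$ to $G(\mathcal{O}_v)$ is a type (in the Bushnell--Kutzko sense) for the resulting component.

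First I would describe the elements of $\mathfrak{R}_v$. Any representative $\sigma$ is a semisimple $\ell$-adic representation of $W_{F_v}$ whose restriction to $I_{F_v}$ equals $\sigma_{x_0}$. Since the $\Frob^{\ast d_v}$-orbits on the characters of $\sigma_{x_0}$ have lengths $n_i$ and each $\chi_i$ factors through $\kappa_{E_i}^\times$ with $E_i/F_v$ the unramified extension of degree $n_i$, every such $\sigma$ decomposes as $\bigoplus_{i=1}^{r}\Ind_{W_{E_i}}^{W_{F_v}}\widetilde\chi_i$ for some extensions $\widetilde\chi_i$ of $\chi_i$, unique up to unramified twists. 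By Theorem \ref{BH}, each summand corresponds via local Langlands to the depth-zero supercuspidal $\pi_i := \pi_{\mu_{n_i}\widetilde\chi_i}$ of $GL_{n_i}(F_v)$. Compatibility of the local Langlands correspondence with parabolic induction then shows that an irreducible smooth $\pi$ of $G(F_v)$ has Langlands parameter in $\mathfrak{R}_v$ if and only if $\pi$ belongs to the Bernstein component $\mathfrak{s}=[L,\pi_1\otimes\cdots\otimes\pi_r]_G$, where $L=\prod_i GL_{n_i}$ is the standard Levi subgroup.

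Second I would decompose $\rho_v$ itself by parabolic induction. Using transitivity of Deligne--Lusztig induction through the Levi $L$, of which $U$ is an elliptic maximal torus, together with the identity $R_{U_i}^{GL_{n_i}}\closure{\chi}_i=(-1)^{n_i-1}\lambda_i$ recalled in Section \ref{Expli}, the signs collapse and one obtains
\[ \rho_v \cong \Ind_{P(\kappa_v)}^{G(\kappa_v)}\bigl(\lambda_1\otimes\cdots\otimes\lambda_r\bigr), \]
inflated to $G(\mathcal{O}_v)$ along $G(\mathcal{O}_v)\twoheadrightarrow G(\kappa_v)$, where $P\supset L$ is the standard parabolic. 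Combining the Iwasawa decomposition $G(F_v)=P(F_v)G(\mathcal{O}_v)$ with Mackey/Frobenius reciprocity, every $\pi=\Ind_{P(F_v)}^{G(F_v)}(\pi_1\otimes\cdots\otimes\pi_r\otimes\mu)$ with $\mu$ an unramified character of $L(F_v)$ contains $\rho_v$ upon restriction to $G(\mathcal{O}_v)$, since $\pi_i|_{GL_{n_i}(\mathcal{O}_v)}$ contains $\lambda_i$ by the Bushnell--Henniart construction; this property then propagates to every irreducible subquotient, yielding one implication of the proposition.

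For the converse I would invoke depth-zero type theory: the inflation of $\rho_v$ is an $\mathfrak{s}$-type in the sense of Bushnell--Kutzko (going back to Morris in this generality, and built into \cite{BH} in the elliptic case $r=1$), so any irreducible smooth $\pi$ with $\pi_{\rho_v}\neq 0$ has cuspidal support of the form $(L,\pi_1\otimes\cdots\otimes\pi_r\otimes\mu)$ for some unramified $\mu$, hence lies in $\mathfrak{s}$. Combining with the first paragraph concludes the proof. The main obstacle is this type-theoretic statement in the converse direction; the regularity of $\closure{\chi}$ coming from the tame regular semisimple hypothesis is what makes the necessary intertwining computations reduce to the finite Deligne--Lusztig situation, ensuring that $\rho_v$ is exactly the correct depth-zero type.
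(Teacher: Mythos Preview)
Your approach is essentially the same as the paper's: both decompose the Galois side as $\bigoplus_i \Ind_{W_{E_i}}^{W_{F_v}}\widetilde\chi_i$, apply Bushnell--Henniart to identify the supercuspidal support, and then use depth-zero type theory for the converse. The only difference is packaging: the paper cites Moy--Prasad \cite[Proposition 6.6, Theorem 6.11(2)]{MP2} directly for the statement that $\pi_{\rho_v}\neq 0$ pins down the inertial supercuspidal support, whereas you phrase this as ``$\rho_v$ is an $\mathfrak{s}$-type'' and attribute it to Morris/Bushnell--Kutzko; these are the same result.

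One small point: your forward implication has an unjustified step, namely that containment of $\rho_v$ in the full parabolic induction ``propagates to every irreducible subquotient.'' This is not automatic and in fact amounts to the type property you only invoke later. The paper avoids this by observing that the $\pi_i$ lie in pairwise distinct inertial classes (since the $\chi_i$ are not $\Gal$-conjugate), so the parabolic inductions are always irreducible and there are no proper subquotients to worry about. Either insert that irreducibility remark, or simply use the type property for both directions at once.
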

\begin{proof}
Let $E_{i}$ be the unramified extension of $F_v$ inside $\closure{F}_v$ of degree $n_i$ and extend $\closure{\chi}_i$ to a character of $E_i^\times $, denoted by $\chi_i'$.   
Note that \[(\Ind_{W_{E_i}}^{W_{F_v}} \chi_i' )|_{I_{F_v}} \cong \bigoplus_{a=1}^{n_i} \chi_i^{q_v^{a}}. \]
We deduce that \[ \Sigma=\bigoplus_{i=1}^r \Ind_{W_{E_i}}^{W_{F_v}}\chi'_i\in \mathcal{R}_v. \]
Local Langlands correspondence (Theorem \ref{BH}) associates \[ \sigma_{\chi'_i}=\iota(\Ind_{W_{E_i}}^{W_{F_v}}\chi'_i)\] to the supercuspidal representation of $\pi_{\mu_{n_i}(\iota \circ\chi_i')}$ of $GL_{n_i}(F_v)$. 

Let $P$ be the standard parabolique subgroup of $G$ associated to the partition $(n_1, \ldots, n_r)$. Since $\pi_{\mu_{n_i}(\iota\circ\chi_i')}$ are pairwise non-inertially equivalent (because it is so in the Galois side), the normalized parabolic induction $\Pi$ of $(\pi_{\mu_{n_1}(\iota\circ\chi_1')}, \ldots, \pi_{\mu_{n_r}(\iota\circ\chi_r')})$ from $P(F_v)$ to $G(F_v)$ is irreducible. The local Langlands correspondence associates $\Sigma$ to $\Pi$. 
From the construction of $\Pi$, we see that 
\[ \Pi_{\rho_v}\neq 0. \]
On the other hand, if $ \Pi_{\rho_v}\neq 0$, then Moy and Prasad \cite[Proposition 6.6, Theorem 6.11(2)]{MP2} prove that the supercuspidal support of $\Pi$ is inertially equivalent to the representation $(\pi_{\mu_{n_1}(\iota\circ\chi_1')}, \ldots, \pi_{\mu_{n_r}(\iota\circ\chi_r')})$ of $M_P(F_v)$, where $M_P$ is the standard Levi subgroup of $P$. Hence the $\ell$-adic representation corresponding to $\Pi$ by the local Langlands correspondence belongs to the inertial equivalence class $\mathcal{R}_v$. \end{proof}

\subsection{Reduction of the problem by the global Langlands correspondence}
Throughout this article, automorphic representations are assumed to be irreducible. Two cuspidal automorphic representations $\pi$ and $\pi'$ are said to be inertially equivalent if there exists a character $\chi$ of $GL_n(\AAA)$ factoring through the composition of the degree and determinant maps, such that 
\[\pi\cong \pi'\otimes\chi.\] 
\begin{theorem}
\label{fixat}
Let $\rho$ be the representation of $G(\mathcal{O})$ that is the tensor product of $\rho_v$ as we constructed before Proposition \ref{623} for $v\in S$ and the trivial representation for $v$ outside $S$.

There is a bijection between the set $E_n^{irr}(\mathfrak{R})^{\Frob^{*}}$ and the set of inertial equivalence classes of cuspidal automorphic representations $\pi$ of $ G(\mathbb{A})$ such that \[\pi_{\rho}\neq \{0\}.\] 
\end{theorem}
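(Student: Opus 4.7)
The plan is to chain together three bijections: Proposition~\ref{weilrep} on the Galois side, Lafforgue's global Langlands correspondence, and the local characterization of $\mathfrak{R}_v$ provided by Proposition~\ref{623}.

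First, by Proposition~\ref{weilrep}, the set $E_n^{irr}(\mathfrak{R})^{\Frob^*}$ is in bijection with inertial equivalence classes of irreducible $n$-dimensional $\ell$-adic representations $\sigma$ of $W_F$ such that $\sigma|_{W_{F_v}}$ belongs to the inertial class $\mathfrak{R}_v$ for every $v \in S$ and is trivial on $I_{F_v}$ for every $v \notin S$. Next, I apply Lafforgue's theorem \cite{Lafforgue}, which (after transport via $\iota$) gives a bijection between isomorphism classes of irreducible $n$-dimensional $\ell$-adic representations of $W_F$ and isomorphism classes of cuspidal automorphic representations of $G(\mathbb{A})$, compatible with the local Langlands correspondence at every place.

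To see that this bijection descends to inertial equivalence classes, observe that on the Galois side $\sigma_1$ and $\sigma_2$ are inertially equivalent iff $\sigma_2 \cong \sigma_1 \otimes \chi$ for a character $\chi$ of $W_F$ factoring through the degree map. Via class field theory such a $\chi$ corresponds to an idele class character of $F$ factoring through $\deg$, and the determinant/central character compatibility of the Langlands correspondence identifies the twist $\sigma_1 \otimes \chi$ with the twist $\pi_1 \otimes (\chi \circ \det)$. Hence Galois-side inertial equivalence matches the automorphic-side inertial equivalence in the statement.

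It remains to translate the local ramification conditions. For $v \notin S$, local Langlands sends unramified representations of $W_{F_v}$ to spherical representations of $G(F_v)$, i.e., those with $\pi_v^{G(\mathcal{O}_v)} \neq 0$; since we have defined $\rho_v$ to be trivial at such $v$, this is precisely the condition $\pi_{v,\rho_v} \neq 0$. For $v \in S$, Proposition~\ref{623} states exactly that an irreducible smooth $\pi_v$ corresponds under local Langlands to an $\ell$-adic representation in $\mathfrak{R}_v$ if and only if $\pi_{v,\rho_v} \neq 0$. Since $\rho = \bigotimes_v \rho_v$, the global condition $\pi_\rho \neq 0$ is equivalent to the conjunction of all local conditions $\pi_{v,\rho_v} \neq 0$. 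Assembling these gives the claimed bijection.

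The main obstacle is the bookkeeping in the middle step: one must verify carefully that the Galois-side twists by characters factoring through $\deg \colon W_F \to \mathbb{Z}$ correspond bijectively, under Lafforgue's correspondence, to the automorphic-side twists by characters factoring through $\deg \circ \det$, so that the two notions of inertial equivalence really do match. This is ultimately a consequence of global class field theory together with the determinant/central character compatibility built into Lafforgue's construction, but it is the only point where one might slip; the remaining ingredients are just citations of Proposition~\ref{weilrep}, Proposition~\ref{623}, and the unramified local Langlands correspondence.
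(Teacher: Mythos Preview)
Your proof is correct and follows essentially the same route as the paper. The paper's argument is terser: it cites Lafforgue directly for local--global compatibility and for the preservation of inertial equivalence (\cite[Th\'eor\`eme VI.9 (ii)]{Lafforgue}), whereas you unpack the latter via class field theory and the central-character compatibility, and you spell out the unramified case $v\notin S$ explicitly; but the logical skeleton (Proposition~\ref{weilrep} $+$ Lafforgue $+$ Proposition~\ref{623}) is identical.
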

\begin{proof}
The global Langlands correspondence is compatible with the local Langlands correspondence (\cite[Corollaire VII.5, p.197]{Lafforgue}) and preserves inertial equivalence relation (\cite[Théorème VI.9 (ii), p.158]{Lafforgue}). The result then follows from the global Langlands correspondence (\cite[Théorème VI.9 (i), p.158]{Lafforgue}), Proposition \ref{weilrep} and Proposition \ref{623}. 
\end{proof}

We will need the following result in later sections, which give an automorphic counterpart of the fact that 
\begin{equation}\label{prod1}
\prod_{x\in \overline{S}} \det \sigma_x=1,   \end{equation}
as a representation of $\hat{\mathbb{Z}}^{p'}(1)$. 

\begin{prop}\label{centralc}
The central character of $\rho$ is trivial restricting to $Z_{G}(\mathbb{F}_q)$. 
\end{prop}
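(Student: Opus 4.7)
The plan is to pull back $\omega_\rho|_{Z_G(\mathbb{F}_q)}$ to the tame inertia $\hat{\mathbb{Z}}^{p'}(1)$ and identify it with $\prod_{x \in \closure{S}} \det \sigma_x$, which equals $1$ by the necessary condition \eqref{prod1}. Since $\rho$ factors as $\bigotimes_{v \in S} \rho_v$ (with trivial factors for $v \notin S$) and $Z_G(\mathbb{F}_q) = \mathbb{F}_q^{\times}$ embeds diagonally into $G(\mathcal{O})$, one has $\omega_\rho|_{Z_G(\mathbb{F}_q)} = \prod_{v \in S} \omega_{\rho_v}|_{\mathbb{F}_q^{\times}}$. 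It will suffice, by surjectivity of the canonical projection $q_1 \colon \hat{\mathbb{Z}}^{p'}(1) \twoheadrightarrow \mu_{q-1} = \mathbb{F}_q^{\times}$, to show that the pullback $q_1^{\ast}(\omega_\rho|_{Z_G(\mathbb{F}_q)})$ is the trivial character of $\hat{\mathbb{Z}}^{p'}(1)$.

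Fix $v \in S$. The standard central character formula for Deligne-Lusztig representations, namely $R_T^G(\theta)(zg) = \theta(z) R_T^G(\theta)(g)$ for $z \in Z_G(\kappa_v)$, gives $\omega_{\rho_v}|_{Z_G(\kappa_v)} = \closure{\chi}|_{Z_G(\kappa_v)}$; since $Z_G(\kappa_v) = \kappa_v^{\times}$ sits diagonally in $U(\kappa_v) = \prod_i \mathbb{F}_{q_v^{n_i}}^{\times}$, this is the character $a \mapsto \prod_i \closure{\chi}_i(a)$. Let $q_v \colon \hat{\mathbb{Z}}^{p'}(1) \to \kappa_v^{\times} = \mu_{q_v-1}$ and $q_{v,n_i} \colon \hat{\mathbb{Z}}^{p'}(1) \to \mathbb{F}_{q_v^{n_i}}^{\times} = \mu_{q_v^{n_i}-1}$ denote the structural projections from the inverse limit. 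The transition map $\mu_{q_v^{n_i}-1} \to \mu_{q_v-1}$ is $x \mapsto x^{(q_v^{n_i}-1)/(q_v-1)}$, the norm $N_{n_i}$, so $q_v = N_{n_i} \circ q_{v,n_i}$. Using $\chi_i = \closure{\chi}_i \circ q_{v,n_i}$, a direct computation yields
\[ q_v^{\ast}(\omega_{\rho_v}|_{\kappa_v^{\times}}) = \prod_i \chi_i^{(q_v^{n_i}-1)/(q_v-1)} = \det \sigma_{x_0}, \]
and in particular $\det \sigma_{x_0}$ has order dividing $q_v-1$. Composing further with $q_1 = N_{\kappa_v/\mathbb{F}_q} \circ q_v$ raises the exponent by a factor $(q_v-1)/(q-1)$, giving $q_1^{\ast}(\omega_{\rho_v}|_{\mathbb{F}_q^{\times}}) = (\det \sigma_{x_0})^{(q_v-1)/(q-1)}$.

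On the other hand, the Frobenius compatibility $\Frob^{\ast}\sigma_{\Frob(x_0)} \cong \sigma_{x_0}$ translates, in the character group of $\hat{\mathbb{Z}}^{p'}(1)$, to $\sigma_{\Frob^j(x_0)} = \sigma_{x_0}^{q^{-j}}$ for $j = 0, \ldots, d_v-1$ (with $q^{-j}$ taken modulo the order of each character factor), whence $\prod_{x \mid v} \det \sigma_x = (\det \sigma_{x_0})^{\sum_{j=0}^{d_v-1} q^{-j}}$. The modular arithmetic identity
\[ \sum_{j=0}^{d_v-1} q^{-j} \equiv \frac{q_v-1}{q-1} \pmod{q_v-1}, \]
which follows directly from $q^{d_v} \equiv 1 \pmod{q_v-1}$, combined with the order bound on $\det \sigma_{x_0}$, yields the identification $q_1^{\ast}(\omega_{\rho_v}|_{\mathbb{F}_q^{\times}}) = \prod_{x \mid v} \det \sigma_x$. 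Taking the product over $v \in S$ gives $q_1^{\ast}(\omega_\rho|_{Z_G(\mathbb{F}_q)}) = \prod_{x \in \closure{S}} \det \sigma_x = 1$, so $\omega_\rho|_{Z_G(\mathbb{F}_q)}$ is trivial. The main difficulty is the bookkeeping among the three projections $q_v$, $q_{v,n_i}$, $q_1$ from $\hat{\mathbb{Z}}^{p'}(1)$ and isolating the arithmetic identity that makes the $\Frob$-orbit exponent sum collapse cleanly; once these are in hand, the proposition becomes essentially a reformulation of \eqref{prod1}.
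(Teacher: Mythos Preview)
Your proof is correct and follows essentially the same approach as the paper's: both compute the central character of $\rho_v$ via the Deligne--Lusztig formula, use the Frobenius relation $(\det\sigma_{\Frob(x)})^q=\det\sigma_x$ to identify $\prod_{x\mid v}\det\sigma_x$ with $(\det\sigma_{x_0})^{(q_v-1)/(q-1)}$, and recognize this as the pullback of $\omega_{\rho_v}|_{\mathbb{F}_q^\times}$ along the canonical surjection $\hat{\mathbb{Z}}^{p'}(1)\twoheadrightarrow\mathbb{F}_q^\times$. The only organizational difference is that you phrase everything in terms of pullbacks $q_1^\ast,q_v^\ast,q_{v,n_i}^\ast$ to $\hat{\mathbb{Z}}^{p'}(1)$, whereas the paper instead records the factorization $\hat{\mathbb{Z}}^{p'}(1)\to\mathbb{F}_{q_v}^\times\to\mathbb{F}_q^\times$ and works with the character $\chi_v$ of $\mathbb{F}_{q_v}^\times$ through which $\det\sigma_{x_0}$ factors; these are two sides of the same diagram.
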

\begin{proof}
The argument is essentially the same as in sections 3.1 and 3.2 of \cite{Deligne}.

Given $v\in S$, let $x_0\in \closure{S}$ be the point over $v$ that we have embeded $F_{1, (x_0)}$ in $\closure{F}$. 
As $\det \sigma_{x_0}$ is a character of  $\hat{\mathbb{Z}}^{p'}(1)$ that is fixed by $\Frob^{\ast d_v}$, there is a character $\chi_v: \mathbb{F}_{q_v}^\times\rightarrow \closure{\mathbb{Q}}_\ell $ such that $\det \sigma_{x_0}$ can be expressed as the composition: 
\[\det \sigma_{x_0}: \hat{\mathbb{Z}}^{p'}(1)\longrightarrow \mathbb{F}_{q_v}^\times \overset{\chi_v}{\longrightarrow} \closure{\mathbb{Q}}_{\ell}^\times. \]

The characters $\det \sigma_x$ for $x\in \closure{S}$ lying over $v$ satisfy the relation \[ (\det \sigma_{\Frob(x)})^q=\det \sigma_{x}.\] 
Therefore for each $v$, \[ \prod_{x\mid v}  \det \sigma_{x}=(\det \sigma_{x_0})^{\frac{q_v-1}{q-1}}.  \]
Note that $(\det \sigma_{x_0})^{\frac{q_v-1}{q-1}}$ can be factored as follows:
\begin{equation}\label{AB}
(\det \sigma_{x_0})^{\frac{q_v-1}{q-1}}: \hat{\mathbb{Z}}^{p'}(1)\longrightarrow \mathbb{F}_{q_v}^\times \overset{ N_{\mathbb{F}_{q_v}/\mathbb{F}_q } }{\longrightarrow} \mathbb{F}_{q}^\times \overset{\chi_v|_{\mathbb{F}_q^\times}}{ \longrightarrow  } \closure{\mathbb{Q}}_{\ell}^\times, \end{equation}
Since the composition of the first two maps in \eqref{AB} gives the quotient map from $\hat{\mathbb{Z}}^{p'}(1)$ to $\mathbb{F}_{q}^\times$, the condition \eqref{prod1} is equivalent to  
\[ \prod_v\chi_v|_{\mathbb{F}_q^\times}=1.  \]
Furthermore, using the construction of $\rho_v$ and applying the same arguments as before, we see that the central character of $\rho_v$ is the character $\chi_v$ of $Z_G(\mathbb{F}_{q_v})\cong \mathbb{F}_{q_v}^\times$. Therefore, the proposition follows. \end{proof}

\begin{prop}\label{Autcusp}
Suppose that the $\mathfrak{R}$ is generic:
\[E_n(\mathfrak{R})=E_n^{irr}(\mathfrak{R}). \] 
Then given any irreducible automorphic representation $\pi$ of $G(\AAA)$, \[\pi_{\rho}\neq \{0\}\implies \pi \text{ is cuspidal }.\]
For such a cuspidal automorphic representation $\pi$ and  any character $\chi$ that factors through $\deg\circ \det$, we have \[\pi\otimes\chi\cong \pi \implies \chi=1.\]

\end{prop}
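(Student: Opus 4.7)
The plan is to transfer both assertions to the Galois side via the global Langlands correspondence \cite[Th\'eor\`eme VI.9]{Lafforgue} and then exploit the genericity hypothesis directly, combined with Schur's lemma.

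For the first assertion, I would realize an arbitrary irreducible automorphic $\pi$ with $\pi_\rho \neq 0$ as a subquotient of $\mathrm{Ind}_P^G(\tau_1 \otimes \cdots \otimes \tau_r)$, where each $\tau_i$ is a cuspidal automorphic representation of $GL_{n_i}(\mathbb{A})$ and $n_1 + \cdots + n_r = n$. Lafforgue attaches to each $\tau_i$ an irreducible $\ell$-adic Galois representation $\sigma_i$ of dimension $n_i$; set $\sigma := \bigoplus_i \sigma_i$. Local-global compatibility \cite[Corollaire VII.5]{Lafforgue} combined with Proposition \ref{623} shows that for every $v \in S$ the restriction $\sigma|_{W_{F_v}}$ lies inertially in $\mathfrak{R}_v$, while $\sigma$ is unramified outside $S$. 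Hence $\sigma|_{W_F^0}$ defines an element of $E_n(\mathfrak{R})$, which by genericity must be irreducible. Since $\sigma|_{W_F^0} = \bigoplus_i \sigma_i|_{W_F^0}$ is a direct-sum decomposition of $W_F^0$-representations, the only way this can be irreducible is $r = 1$, so $\pi = \tau_1$ is cuspidal.

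For the second assertion, let $\sigma$ be the irreducible Galois parameter of the cuspidal $\pi$, and let $\chi'$ be the character of $W_F$ corresponding to $\chi$ under class field theory. Because $\chi$ factors through $\deg \circ \det$, the character $\chi'$ factors through $W_F/W_F^0$, and in particular is trivial on $W_F^0$. The relation $\pi \otimes \chi \cong \pi$ transfers to $\sigma \otimes \chi' \cong \sigma$; any intertwiner $\phi$ restricts to a $W_F^0$-equivariant self-map of the irreducible $\sigma|_{W_F^0}$, so by Schur's lemma $\phi = \lambda \cdot \mathrm{id}$ for some scalar $\lambda$. Substituting back into $\phi \circ \sigma(w) = \chi'(w)\,\sigma(w) \circ \phi$ for arbitrary $w \in W_F$ yields $\chi'(w) = 1$, hence $\chi' = 1$ and $\chi = 1$.

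The main technical point to verify is the local compatibility at $v \in S$: namely that the semisimple Langlands parameter of $\pi_v$, when $\pi_v$ is a subquotient of a parabolic induction from cuspidals, coincides with $\bigoplus_i \sigma_i|_{W_{F_v}}$ and still lies inertially in $\mathfrak{R}_v$. This is essentially the combination of the Moy--Prasad input underlying Proposition \ref{623} and the stability of the inertial class under unramified twist; once it is in place, both conclusions follow at once from genericity and Schur's lemma.
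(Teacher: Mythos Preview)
Your proposal is correct and follows essentially the same approach as the paper: both transfer to the Galois side via Lafforgue, use the Moy--Prasad input behind Proposition \ref{623} to see that the assembled parameter has the prescribed inertial behaviour at each $v\in S$, and then appeal to genericity. The paper organizes the first step slightly differently (it constructs a representation $\rho^M$ of $M(\mathcal{O})$ with $\sigma_{\rho^M}\neq 0$ and applies Langlands directly to the Levi $M$ rather than to each $GL_{n_i}$ factor), and for the second assertion it quotes \cite[Proposition 2.1.3]{Yu1} in place of your explicit Schur's-lemma argument, but the substance is the same.
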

\begin{proof}
Suppose that $\pi$ is not cuspidal. Then it is an irreducible subquotient of the parabolic induction of a cuspidal automorphic representation $\sigma$ of $M(\AAA)$ for a proper standard Levi subgroup $M$ of the proper parabolic subgroup $P$ of $G$. 

Suppose $\pi=\otimes'\pi_v$ is a restricted tensor product decomposition of $\pi$. 
If $\pi_{\rho} \neq 0$, then for any $v \in S$, we have \[ \pi_{v, \rho_v} \neq 0,\] and $\pi_v$ is unramified for $v$ outside $S$. Let $v \in S$ and $T_v$ be a maximal torus defined over $\kappa_v$ that we used in the Deligne-Lusztig induction to define $\rho_v$. 
After Moy-Prasad, the supercuspidal support of $\sigma_v$ can be determined in the following way.  Let $M_v$ be the minimal Levi subgroup of $G$ defined over $\kappa_v$ containing $T_v$, and $\tau_v$ be an extension of $\rho_v$ to $Z_{M_v}(F_v)M_v(\ooo_v)$, which is the normalizer of $M_v(\ooo_v)$ in $M_v(F_v)$ by Cartan decomposition. The representation \[ \phi_v = \cInd_{Z_{M_v}(F_v)M_v(\ooo_v)}^{M_v(F_v)}(\tau_v) \] is irreducible and supercuspidal (\cite[Proposition 6.6]{MP2}). It is shown in \cite[Theorem 6.11(2)]{MP2} that, for some extension $\tau_v$ of $\rho_v$, the $G(F_v)$-conjugacy class $[(M_v, \phi_v)]$ is the supercuspidal support of $\pi_v$.

For each place $v\in S$, the local component $\pi_v$ is also a subquotient of $\mathrm{Ind}_{P(F_v)}^{G(F_v) }\sigma_v$. Therefore there is an element $x_v\in G(F_v)$ such that $x_vM_v x_v^{-1} \subseteq M$ and $\sigma_v$ has supercuspidal support $[(x_vM_v x_v^{-1}, \phi_v^{x_v})]$. Since $P$ is defined over $\mathbb{F}_q$ and $M_v$ is defined over $\kappa_v$, we can take $x_v\in G(\kappa_v)$. 
Up to a conjugation of $G(\kappa_v)$, we may suppose that $M_v\subseteq M$ (here we use base change to view $M$ as a group over $\kappa_v$) and $\sigma_v$ has supercuspidal support $[(M_v, \phi_v)]$. Let $\rho^M_v$ be the irreducible representation of $M(\kappa_v)$ whose irreducible character is equal to the Deligne-Lusztig induced character $R_{T_v}^{M}(\theta_v)$ up to a sign, and $\rho^M$ the representation of $M(\ooo)$ that is the tensor product over all $v\in S$ of $\rho^M_v$, then we have \[\sigma_{\rho^M}\neq\{0\}. \]
 Applying the global Langlands correspondence and Corollary \ref{623} to $M$, which is a product of general linear groups, we obtain a direct sum of $\ell$-adic local systems whose local monodromies are prescribed by $\mathfrak{R}$. This is a counter-example to the genericity condition that 
 \[E_n^{irr}(\mathfrak{R})=E_n(\mathfrak{R}). \]

 The second part is similar. After a twist, we can suppose that the central character of $\pi$ has finite order, corresponding to an $\ell$-adic local system over $X-S$ by the Langlands correspondence.   
 If \[ \pi \otimes \chi\cong \pi,\]
 then by \cite[Proposition 2.1.3]{Yu1}, the $\ell$-adic local system over $X-S$ corresponding to $\pi$ is not irreducible restricting to $\closure{X}-\closure{S}$. This contradicts our hypothesis that 
$\mathfrak{R}$ is generic. 
\end{proof}

\begin{theorem}\label{mul2}
We define 
$$e_{\rho}= \bigotimes_{v\in S} e_{\rho_v}\otimes \bigotimes_{v\notin S}\mathbbm{1}_{G(\ooo_v)}\in C_c^\infty(G(\AAA)),$$
to be the tensor product of the the characteristic function $\mathbbm{1}_{G(\mathcal{O}_v)}$ of ${G(\ooo_v)}$ for $v\notin S$ with $e_{\rho_v}$ defined below for $v\in S$:
$$e_{\rho_v}(x):=\begin{cases}
\Theta_{\rho_v}(\closure{x}^{-1}), \quad x\in G(\ooo_v);\\
0, \quad x\notin G(\ooo_v); 
\end{cases}$$
where $\Theta_{\rho_v}$ is the trace function of the representation $\rho_v$ of $G(\kappa_v)$ and $\closure{x}$ denotes the image of $x$ via the reduction map $G(\ooo_v)\rightarrow G(\kappa_v)$. For any irreducible automorphic representation $\pi$, if $\pi_\rho\neq 0$, we have 
\[ \mathrm{Tr}(e_{\rho}|\pi)=1; \]
if $\pi_\rho= 0$, we have 
\[ \mathrm{Tr}(e_{\rho}|\pi)=0. \]
\end{theorem}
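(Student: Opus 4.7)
The plan is to factorize the trace $\Tr(e_\rho \mid \pi)$ as a product of local traces, using that $e_\rho = \bigotimes_v f_v$ is a pure tensor with $f_v = \mathbbm{1}_{G(\ooo_v)}$ for $v \notin S$ and $f_v = e_{\rho_v}$ for $v \in S$. For a restricted tensor product decomposition $\pi = {\otimes}'_v \pi_v$, this yields
\[ \Tr(e_\rho \mid \pi) \;=\; \prod_{v \notin S} \Tr(\pi_v(\mathbbm{1}_{G(\ooo_v)}) \mid \pi_v) \;\cdot\; \prod_{v \in S} \Tr(\pi_v(e_{\rho_v}) \mid \pi_v), \]
so the problem reduces to computing each local factor and showing it equals $0$ or $1$.

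For places $v \notin S$, with the normalization that $G(\ooo_v)$ has Haar measure $1$, the operator $\pi_v(\mathbbm{1}_{G(\ooo_v)})$ is the projector onto the $G(\ooo_v)$-fixed subspace, so its trace equals $\dim \pi_v^{G(\ooo_v)}$, which is $1$ when $\pi_v$ is unramified and $0$ otherwise. For places $v \in S$, I would exploit the fact that $e_{\rho_v}$ is supported on $G(\ooo_v)$ and bi-invariant under $K_v^1 = 1 + \wp_v M_n(\ooo_v)$: this forces $\pi_v(e_{\rho_v})$ to vanish on the complement of $\pi_v^{K_v^1}$, and on $\pi_v^{K_v^1}$ viewed as a $G(\kappa_v)$-module via reduction it acts as $\frac{1}{|G(\kappa_v)|} \sum_{h \in G(\kappa_v)} \Theta_{\rho_v}(h^{-1}) h$. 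Schur orthogonality on the finite group $G(\kappa_v)$ then identifies this operator with $(\dim \rho_v)^{-1}$ times the projector onto the $\rho_v$-isotypic component of $\pi_v^{K_v^1}$, so its trace equals the multiplicity $\dim \Hom_{G(\ooo_v)}(\rho_v, \pi_v)$.

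At this point the case $\pi_\rho = 0$ is immediate: at least one local factor vanishes, forcing $\Tr(e_\rho \mid \pi) = 0$. When $\pi_\rho \neq 0$, every local factor is positive, so the remaining task is to upgrade each positive factor to $1$. The outside-$S$ factors are automatically $1$ (Satake). The main obstacle is multiplicity one at places $v \in S$, i.e.\ showing that $\dim \Hom_{G(\ooo_v)}(\rho_v, \pi_v) = 1$ whenever it is nonzero. Here I would invoke the structural results used in the proof of Proposition \ref{623}: by Moy--Prasad (Theorem 6.11(2)) the local component $\pi_v$ is forced to be the normalized parabolic induction $\Pi$ of the depth-zero supercuspidal $\phi_v$ of $M_v(F_v)$, and the pair $(G(\ooo_v), \rho_v)$ is a depth-zero type for the corresponding simple Bernstein component. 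Multiplicity one for such a type follows either from the Bushnell--Kutzko theory of types, or, more directly, from a Mackey/Frobenius-reciprocity computation using the Iwasawa decomposition $G(F_v) = P(F_v) G(\ooo_v)$ together with the explicit description of the $K_v^1$-fixed vectors of $\phi_v$ via Deligne--Lusztig induction from $U(\kappa_v)$ with $\bar\chi$ in general position. Combining the local contributions yields $\Tr(e_\rho \mid \pi) = 1$ in this case, completing the proof.
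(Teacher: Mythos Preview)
Your proposal is correct and follows essentially the same route as the paper: factorize over places, identify the local trace at $v\in S$ with $\dim\Hom_{G(\mathcal{O}_v)}(\rho_v,\pi_v)$, and then prove this multiplicity is at most one. The only substantive difference is in how the multiplicity-one step is carried out. The paper bounds $\Hom_{G(\mathcal{O}_v)}(\rho_v,\pi_v)$ by $\Hom_{G(\mathcal{O}_v)}(\rho_v,\Ind_P^G\sigma)$ where $(P,\sigma)$ is the cuspidal support of $\pi_v$, passes to $G(\mathcal{O}_v)_+$-invariants to reduce to a finite-group computation $\Hom_{G(\kappa_v)}(\rho_v,\Ind_{P(\kappa_v)}^{G(\kappa_v)}\lambda)$ with $\lambda$ cuspidal, and then invokes \cite[Theorem~6.8]{DL}: since $\rho_v$ is Deligne--Lusztig induced from a character in general position and the parabolically induced cuspidal is again Deligne--Lusztig induced, the inner-product formula gives dimension at most one. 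Your ``more direct'' route (Mackey/Frobenius via Iwasawa plus the Deligne--Lusztig description of the $K_v^1$-invariants) unwinds to exactly this computation; your alternative via Bushnell--Kutzko type theory is also valid but is a heavier hammer than the paper uses. One small imprecision: Moy--Prasad pins down the supercuspidal support of $\pi_v$, not that $\pi_v$ equals the full parabolic induction, so you should bound by the induced representation as the paper does (or else argue separately that the induction is irreducible here).
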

\begin{proof}
Let $\pi$ be an irreducible automorphic representation of $G(\mathbb{A})$. Then $\pi$ admits a restricted tensor product decomposition of the form $\pi=\bigotimes' \pi_v$, where the tensor product is taken over all places $v$ of $F$, and $\pi_v$ is an irreducible smooth representation of $G(F_v)$. 
If for every $v\notin S$, $\pi_v$ is unramified, i.e., if $\pi_v^{G(\mathcal{O}_v)}\neq 0$, then 
\[\Tr(e_\rho|\pi)=\prod_{v\in S} \Tr(e_{\rho_v}|\pi_v). \]
Otherwise \[\Tr(e_\rho|\pi)=0.\]
Note that \[ \Tr(e_{\rho_v}|\pi_v)=\dim_{\mathbb{C}}\Hom_{G(\mathcal{O}_v)}(\rho_v, \pi_v),\] it suffices to show that for every $v\in S$: 
\[\dim_{\mathbb{C}}\Hom_{G(\mathcal{O}_v)}(\rho_v, \pi_v)\leq 1. \]

Suppose that the cuspidal support of $\pi_v$ is the conjugacy class of $(P, \sigma)$, where $P$ is a standard parabolic subgroup of $G$, and $\sigma$ is a cuspidal representation of the standard Levi subgroup $M$ of $P$. It suffices to prove that  
\[\dim_{\mathbb{C}}\Hom_{G(\mathcal{O}_v)}(\rho_v, \Ind_{P}^G\sigma)\leq 1.\]
Let $G(\mathcal{O}_v)_{+}$ be the pro-p radical of $G(\mathcal{O}_v)$. 
Since $\rho_v$ is induced from a representation of $G(\kappa_v)$, we deduce that 
\[\Hom_{G(\mathcal{O}_v)}(\rho_v, \Ind_{P}^G\sigma) \cong \Hom_{G(\kappa_v)}(\rho_v, (\Ind_{P}^G\sigma)^{G(\mathcal{O}_v)_{+}}).  \]
Here $(\Ind_{P}^G\sigma)^{G(\mathcal{O}_v)_{+}}$ is the ${G(\mathcal{O}_v)_{+}}$-fixed subspace of $\Ind_{P}^G\sigma$. 
The above Hom sets are further isomorphic to 
\[\Hom_{G(\kappa_v)}(\rho_v, \Ind_{P(\kappa_v)}^{G(\kappa_v)}(\sigma^{M(\mathcal{O}_v)_{+}})     ).   \]
By \cite[Proposition 6.8]{MP2} (or by the discussions in Section \ref{Expli}), we know that there is an irreducible cuspidal representation $\lambda$ of $M(\kappa_v)$ so that 
$\sigma$ is compactly induced from a representation $\Lambda$ of $Z_{M}(F_v)M(\mathcal{O}_v)$ that inflates and extends $\lambda$. The $M(\mathcal{O}_v)_{+}$-invariant subspace of $\sigma$ is isomorphic to $\Lambda$ as a representation of $Z_{M}(F_v)M(\mathcal{O}_v)$ (see \cite[Corollary 5.3]{Vig}). In particular, it is irreducible. We have 
\[ \Hom_{G(\mathcal{O}_v)}(\rho_v, \Ind_{P}^G\sigma) \cong \Hom_{G(\kappa_v)}(\rho_v, \Ind_{P(\kappa_v)}^{G(\kappa_v)}\lambda). \]
Since $\lambda$ is a cuspidal representation of a finite Levi subgroup of $G(\kappa_v)$, it is a Deligne-Lusztig induced representation  (by Green's classification for representations of finite general linear groups \cite{Green}). Therefore $\Ind_{P(\kappa_v)}^{G(\kappa_v)}\lambda$ is also a Deligne-Lusztig induced representation (see the remark below \cite[Definition 11.1]{DiMi} and Paragraph 11.5 of \textit{op. cit.}). 
We deduce by \cite[Theorem 6.8]{DL} that
\[\dim_{\mathbb{C}} \Hom_{G(\kappa_v)}(\rho_v, \Ind_{P(\kappa_v)}^{G(\kappa_v)}\lambda) \leq 1,\]
as $\rho_v$ is induced from a character in general position. 
\end{proof}

\section{The Arthur-Selberg trace formula}

In this section, we introduce the Arthur-Selberg trace formula established over a function field by L. Lafforgue \cite{Laf}. It is a linear functional over $C_c^\infty(G(\AAA))$. We modify the construction by adding an additional parameter $\xi$. This construction is inspired by the semistability of the parabolic structures of Higgs bundles. We demonstrate in Theorem \ref{Einfinitesimal} that the cardinality of the set $E_n^{irr}(\mathfrak{R})^{\Frob^\ast}$ can be calculated using these trace formulas with the test function constructed in Theorem \ref{mul2}. To deal with them, we also introduce an infinitesimal trace formula and relate the original trace formula to this infinitesimal analog.

\subsection{Notation for the trace formula}
Let $B$ be the Borel subgroup of $G$ consisting of upper triangular matrices, and $T$ be the maximal torus of diagonal matrices. Let $W=W(G, T)$ be the Weyl group of $G$ related to $T$. We identify $W$ with the set of permutation matrices. Let $\mathfrak{g}$, $\mathfrak{b}$, $\mathfrak{n}$ and $\mathfrak{t}$ denote respectively the Lie algebras of $G$, $B$, the unipotent radical of $B$ and $T$.

Let $R$ be an algebraic subgroup of $G$ defined over $F$, we  define
\[ X^{*}(R)=\Hom_F(R,\mathbb{G}_{m/F}),\] 
and
\[ \ago_R^{*}= X^{*}(R)\otimes\mathbb{Q},\quad \ago_R= \Hom_{\mathbb{Q}}(\ago_R^{*}, \mathbb{Q}).\] 

Suppose that $P, Q$ are standard parabolic subgroups of $G$. 
 We will use the following notation. 
\begin{enumerate}
\item[$\bullet$] $\mathcal{P}_0$: the set of standard parabolic subgroups of $G$;
\item[$\bullet$] $\langle \cdot, \cdot \rangle:$ the canonical pairing between $\ago_B^{*}$ and $\ago_B$;
\item[$\bullet$] $\ago_P=\ago_P^Q\oplus \ago_Q$ ($P\subseteq Q$), decomposition induced by $X^{*}(Q)\rightarrow X^{*}(P)$ and $X^{*}(A_P)\rightarrow X^{*}(A_Q) $, where in our case that $G=GL_n$, $A_P$ (resp. $A_Q$) is the center of $P$ (resp. $Q$);
\item[$\bullet$] $\Phi(P,T)$: roots of the torus $T$ acting on the Lie algebra of $\ppp$ of $P$;
\item[$\bullet$] $\Delta_B$ simple roots in $ \Phi(G, T)$; 
\item[$\bullet$] $\Delta_B^P$ simple roots in $ \Phi(M_P, T)$, where $M_P$ is the standard Levi subgroup of $P$; 
\item[$\bullet$] $\Delta_B^{\vee}$ the set of corresponding coroots; $\hat{\Delta}_B$ the set of corresponding fundamental weights (the base that is dual to $\Delta_B^\vee$); 
\item[$\bullet$] $\hat{\Delta}_P=\{\varpi \in \hat{\Delta}_B  \mid \varpi|_{\ago_B^P}=0  \}$;
\item[$\bullet$] $\Delta_P^Q=\{\alpha|_{A_P} \mid \alpha \in \Delta_B^Q-\Delta_B^{P}\}$, viewed as a set of linear functions on $\ago_B$ via the projection $\ago_B\rightarrow \ago_P^Q$;
\item[$\bullet$] $\htau_P$ the characteristic function of $H\in \ago_B$ such that $\langle\varpi,H\rangle>0$ for all $\varpi\in \hat{\Delta}_P$; 
\item[$\bullet$] $\tau_P^{Q}$ the characteristic function of $H\in \ago_B$ such that $\langle\alpha,H\rangle>0$ for all $\alpha\in \Delta_P^Q$; 
\end{enumerate}
Note that since $T\cong \mathbb{G}_m^n$, we have \[\ago_B\cong \mathbb{Q}^n. \]
If one prefers, there are explicit presentations using a coordinate system for the above notation. The reader can consult \cite[2.5]{Chau} or \cite[3.1]{Yu1}.

For each semi-standard parabolic subgroup $P$ of $G$ defined over $F$, we define the Harish-Chandra's map $H_P: P(\AAA)\rightarrow \ago_{P} $ by requiring   
\begin{equation}\langle\alpha,  H_P(p)\rangle =\deg \alpha(p),  \quad \forall \alpha\in X^{*}(P)_F.  \end{equation}
Using Iwasawa decomposition, we may extend the definition of $H_P$ to $G(\AAA)$ by asking it to be $G(\ooo)$-right invariant.

Let $x\in G(\AAA)$, for each place $v$. We define a Weyl element $s_{x,v}\in W$ as follows. First, we have the Iwasawa decomposition $x=pk$ with $p\in B(\AAA)$ and $k\in G(\ooo)$. Then we define $s_{x,v}$ as the image of $k$ via in the following double quotient identified with $W$: 
\[\mathcal{I}_v \backslash G(\ooo_v )/ \mathcal{I}_v \cong B(\kappa_v)\backslash G(\kappa_v)/B(\kappa_v)\cong W.\]

The Haar measures are normalized in the following way.
For any place $v$ of $F$ and 
for a Lie algebra $\ggg$ defined over $\mathbb{F}_q$, the volumes of the sets $\ggg(\ooo_v)$ and $\ggg(\ooo)$ are normalized to be $1$; the volume of $G(\ooo_v)$ and $G(\ooo)$ are normalized to be $1$; for every semi-standard parabolic subgroup $P$ of $G$ defined over $\mathbb{F}_q$, the measure on $N_P(\AAA)$ is normalized so that \[ \vol(N_P(F)\backslash N_P(\AAA))=1.\]

\subsection{Geometric side of the Arthur-Selberg trace formula}\label{relationst}
In this section, we will recall the geometric side of the trace formula, while omitting the spectral side. Due to the generic condition we have imposed, the spectral side of the trace formula is considerably simplified and will be discussed in the last section.

Let $a\in \mathbb{A}^{\times}$ be a degree 1 idèle, viewed as a scalar matrix in $G(\mathbb{A})$. 
Let $f\in C_{c}^{\infty}(G(\AAA))$ be a complex smooth function with compact support. Let $P$ be a standard parabolic subgroup of $G$. The function $f$ acts via the right regular representation on $L^2(M_P(F)N_P(\AAA)\backslash G(\AAA)/a^{\mathbb{Z}})$, where $M_P$ is the standard Levi subgroup of $P$ and $N_P$ is the nilpotent radical of $P$. 
 It is an integral operator with the kernel function over $G(\AAA)\times G(\AAA)$ given by:
\[  k_P(x,y)= \sum_{\gamma\in M_P(F)a^{\mathbb{Z}}}\int_{N_P(\AAA)} f(y^{-1}\gamma n x)\d n. \]

Let \[\xi=(\xi_v)_{v\in S}\in (\ago_{B})^{S}\cong (\mathbb{Q}^n)^S.\] 
The Arthur-Selberg trace formula calculates the integral over $x\in G(F)\backslash G(\AAA)^{e}$ of the following expression: 
\begin{equation}\label{truncatedk} 
k_f^{\xi}(x):=\sum_{P\in \mathcal{P}_0} (-1)^{\dim \ago_P^{G}} \sum_{\delta\in P(F)\backslash G(F)}\hat{\tau}_P(H_B(\delta x) + \sum_{v\in S}s_{\delta x, v}\xi_v) {k}_{P}(\delta x, \delta x).  \end{equation}
The summand for $P=G$ is $k_G(x, x)$, and the other terms ensure that the sum is integrable (it is proved in \cite{Laf} when $\xi=0$ and in \cite{Yu2} with a non-zero $\xi$).  We denote this integral by \[J^{e, \xi}(f). \]
When $\xi=0$, we will drop it from the notation.

For our purpose, it is easier to use another expression for the truncated trace.  We define the function $j_f^\xi$ over $G(\AAA)$ by:
\[j_f^\xi(x):=\sum_{P\in \mathcal{P}_0} (-1)^{\dim \ago_P^{G}} \sum_{\delta\in P(F)\backslash G(F)}\hat{\tau}_P(H_B(\delta x) + \sum_{v\in S}s_{\delta x, v}\xi_v) \widetilde{k}_{P}(\delta x, \delta x),  \] 
where \[  \widetilde{k}_P(x,y)= \sum_{\gamma\in P(F) a^{\mathbb{Z}}}f(y^{-1}\gamma x). \] 
We proved in \cite[Proposition 9.1]{Yu2} that $j_f^{\xi}$ is integrable, and its integration over $G(F)\backslash G(\AAA)^{e}$ coincides with $k_f^\xi$:
\begin{equation}J^{e, \xi}(f)=\int_{G(F)\backslash G(\AAA)^e}j_f^{\xi}(x)\d x. \end{equation}

\subsection{Expressing $|E_n^{irr}(\mathfrak{R})^{\Frob^\ast}|$ using a trace formula}
The goal of this subsection is to express the cardinality $|E_n^{irr}(\mathfrak{R})^{\Frob^\ast}|$ using a trace formula.

\begin{theorem}\label{Einfinitesimal}

For any $(e, \xi)\in \mathbb{Z}\times(\ago_B)^S$, we have:
\[J^{e,\xi}(e_\rho)= |E_n^{irr}(\mathfrak{R})^{\Frob^{\ast}}|.\]
In particular, the left-hand side is independent of $(e, \xi)$. 
\end{theorem}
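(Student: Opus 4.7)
The plan is to invoke the spectral expansion of the truncated trace $J^{e,\xi}(f)$ established by Arthur-Selberg-Lafforgue and extended to the $\xi$-truncated setting in \cite{Yu2}, applied to $f=e_\rho$. The spectral side decomposes into contributions indexed by cuspidal data $(M,\sigma)$ on standard Levi subgroups $M$ of $G$. I would first argue that only the cuspidal spectrum of $G$ itself contributes. For $M\subsetneq G$, the corresponding contribution is controlled by traces of $e_\rho$ on the parabolic inductions $\mathrm{Ind}_P^G(\sigma_\lambda)$. Every irreducible constituent $\pi$ of such an induction is non-cuspidal as an automorphic representation of $G(\mathbb{A})$; by Proposition \ref{Autcusp}, $\pi_\rho=0$, whence by Theorem \ref{mul2}, $\mathrm{Tr}(\pi(e_\rho))=0$. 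Hence every Eisenstein/continuous-spectrum term vanishes---and in particular every term that depends on the truncation parameter $\xi$, since $\xi$ only enters through the $\hat\tau_P$-truncations attached to proper parabolics.

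The surviving cuspidal contribution has the form $\sum_\pi \mathrm{Tr}(\pi(e_\rho))$, where $\pi$ runs over the cuspidal automorphic representations occurring in the cuspidal spectrum of $L^2(G(F)\backslash G(\mathbb{A})^e/a^{\mathbb{Z}})$, each entering with multiplicity one by the multiplicity-one theorem for $GL_n$. By Theorem \ref{mul2}, the trace equals $1$ if $\pi_\rho\neq 0$ and $0$ otherwise, so the spectral side reduces to the cardinality of the set of cuspidal $\pi$ on this quotient with $\pi_\rho\neq 0$. I would then identify this cardinality with $|E_n^{irr}(\mathfrak{R})^{\Frob^\ast}|$ by combining three ingredients: Theorem \ref{fixat}, which puts the inertial equivalence classes of such $\pi$ in bijection with $E_n^{irr}(\mathfrak{R})^{\Frob^\ast}$; Proposition \ref{Autcusp}, which makes the twist action $\pi\mapsto \pi\otimes\chi$ free on the relevant set; and Proposition \ref{centralc}, providing the central-character compatibility at $Z_G(\mathbb{F}_q)$ needed to check that the quotient by $a^{\mathbb{Z}}$ together with the choice of component $G(\mathbb{A})^e$ picks out exactly one representative from each inertial class.

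The independence of $J^{e,\xi}(e_\rho)$ from $(e,\xi)$ is then built into the argument: independence of $\xi$ is immediate from the vanishing of all $\xi$-dependent terms in the spectral expansion, while independence of $e$ follows because the cuspidal spectra on the various components $G(\mathbb{A})^e$ are intertwined by left translation by any adelic element of the appropriate degree. I expect the genuinely delicate step to be the ``exactly one representative per inertial class'' matching in the second paragraph: one must track how cuspidal central characters interact with the twisting group of unramified characters factoring through $\deg\circ\det$ and with the $n$-component structure of $G(\mathbb{A})/a^{\mathbb{Z}}$, ruling out simultaneously overcounting (several twists of the same $\pi$ surviving the constraints) and undercounting (some inertial classes being missed on the chosen component), using Proposition \ref{centralc} as the key compatibility. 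The remainder of the argument is a direct assembly of the lemmas established earlier in this section.
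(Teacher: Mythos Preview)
Your overall strategy is correct and close to the paper's, but the paper organizes the argument differently in two places, and this difference matters for exactly the step you flag as delicate.

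First, the paper does not invoke a spectral expansion of the truncated trace. Instead it observes directly that for $P\neq G$, the operator $R(e_\rho)$ on $L^2(N_P(\mathbb{A})M_P(F)\backslash G(\mathbb{A})/a^{\mathbb{Z}})$ is the zero operator (since no irreducible subquotient is cuspidal, by Proposition~\ref{Autcusp} and Theorem~\ref{mul2}), hence the kernel function $k_P$ is identically zero. This kills every term with $P\neq G$ in the truncated sum $k^{\xi}_{e_\rho}(x)$, so the truncation and the parameter $\xi$ simply disappear: one is left with the integral of $k_G(x,x)$, which is the trace of $R(e_\rho)$ on $L^2_{cusp}$. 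This is lighter than appealing to a full spectral expansion in the $\xi$-truncated setting.

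Second, and more importantly for the step you call delicate: the paper does not work directly on a single component $G(\mathbb{A})^e$ and then try to match representatives of inertial classes. Instead it introduces the character $\eta(x)=\zeta^{\deg\det x}$ with $\zeta$ a primitive $n$-th root of unity, writes
\[
J^{e,\xi}(e_\rho)=\frac{1}{n}\sum_{k=1}^n \zeta^{-ek}\,J^{\xi}_{\eta^k}(e_\rho),
\]
and computes each $J^{\xi}_{\eta^k}(e_\rho)=\mathrm{Tr}(\eta^k\circ R(e_\rho)\mid L^2_{cusp})$. Grouping the cuspidal spectrum by inertial class and using that $l_\pi=n$ whenever $\pi_\rho\neq 0$ (Proposition~\ref{Autcusp}), one finds that $J^{\xi}_{\eta^k}(e_\rho)$ vanishes unless $n\mid k$, in which case it equals $n\cdot|E_n^{irr}(\mathfrak{R})^{\Frob^\ast}|$. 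The Fourier inversion then gives the result for every $e$ at once, with no separate bookkeeping of central characters on a fixed component. In particular, Proposition~\ref{centralc} is not used here (it is needed later, in Section~\ref{Compar}); you can drop it from this argument.
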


\begin{proof}
We have chosen a lattice $a^\mathbb{Z}$ in $Z_G(F)\backslash Z_G(\AAA)$ generated by a scalar id\`ele $a$ of degree $1$. Let $\zeta$ be a primitive $n$-th root of unity, and $\eta$ be the character of $G(\AAA)$ defined by
\[ \eta(x)= \zeta^{\deg \det x}.\] We define $J^{\xi}_{\eta}(e_\rho)$ as follows: 
$$J^{ \xi}_{\eta}(e_\rho)= \int_{G(F)\backslash G(\AAA)/a^\mathbb{Z}} \eta(x) k^{\xi}_{e_\rho}(x)\d x.   $$
Using this definition, for any integer $e\in \mathbb{Z}$, we deduce that \[J^{e, \xi}(e_\rho) = \frac{1}{n}\sum_{k=1}^{n}\zeta^{-ek}J^{\xi}_{\eta^{k}}(e_\rho).  \]

Recall that the function $k^{\xi}_{e_\rho}(x)$ is a truncated linear combination of the diagonal of the kernel functions $k_{P}(x,y)$ of $e_\rho$ acting via the regular representation on $L^2(N_P(\AAA)M_P(F)\backslash G(\AAA)/ a^\mathbb{Z})$. The operator $R(({\dim \rho}) e_\rho)$ is a projector onto the $\rho$-isotypic subspaces, as can be seen from the identity \[({\dim \rho}) e_\rho\ast ({\dim \rho}) e_\rho=({\dim \rho}) e_\rho,\]
where $\ast$ is the convolution product.

When $P\neq G$, no irreducible subquotient in $L^2(N_P(\AAA)M_P(F)\backslash G(\AAA)/ a^\mathbb{Z})$ is cuspidal. Hence, by Proposition \ref{Autcusp} and Proposition \ref{mul2}, $R(e_\rho)$ is the zero operator. For $P=G$, the image of $R(e_\rho)$ is contained in the subspace of cusp forms $L^2_{cusp}$ of $L^2(G(F)\backslash G(\AAA)/a^\mathbb{Z})$. Therefore, 
\[ J^{\xi}_{\eta^k}(e_\rho)=\Tr(\eta^k \circ R(e_\rho)\mid L^2_{cusp}).\]

The multiplicity one theorem  of Jacquet, Langlands, Piatetski-Shapiro, and Shalika, implies that
\[ L^2_{cusp}=\hat{\bigoplus}_{\pi} \pi , \]
where the Hilbert direct sum is taken over the set of isomorphism classes of cuspidal automorphic representations $\pi$, such that the central character $\chi_\pi$ is trivial on $a^\mathbb{Z}$. For each $\pi$, let $l_\pi$ be the smallest strictly positive integer such that \[ \pi\cong \pi\otimes{\eta^{l_\pi}}. \]
If $\pi$ appears in $L^2_{cusp}$, then there exist $l_\pi$ distinct isomorphism classes of cuspidal automorphic representations in $L^2_{cusp}$ that are inertially equivalent to $\pi$. These classes correspond to twists of $\pi$ by a power of the character $\eta$.
We deduce that
\begin{equation}\label{001}J^{\xi}_{\eta^k}(e_\rho)= \sum_{\pi} \Tr( \eta^k \circ R(e_\rho) | \pi\oplus \pi_{\eta}\oplus\ldots\oplus \pi_{\eta^{l_\pi-1}}),  \end{equation}
where the sum is taken over the set of inertial equivalence classes of cuspidal automorphic representations $\pi$ whose central character $\chi_\pi$ is trivial on $a^\mathbb{Z}$. Here $\pi_\eta:=\pi\otimes \eta $.
Since $\eta^k\circ R(e_\rho)$ maps an element in $\pi$ to $\pi_{\eta^k}$, it follows that if $\pi_{\eta^k}\not\cong \pi$, then we have \[\Tr( \eta^k \circ R(e_\rho) | \pi\oplus \pi_{\eta}\oplus\ldots\oplus \pi_{\eta^{l_\pi-1}})=  0. \] 
If $\pi_{\eta^k}\cong \pi$, then $l_\pi\mid k$ and
\[\Tr( \eta^k \circ R(e_\rho) | \pi\oplus \pi_{\eta}\oplus\ldots\oplus \pi_{\eta^{l_\pi-1}})=l_\pi \Tr(\eta^k \circ R(e_\rho)| \pi).  \]
Therefore we have
\begin{equation}\label{001}J^{\xi}_{\eta^k}(e_\rho)= \sum_{\pi: l_\pi\mid k} l_\pi \Tr(\eta^k \circ R(e_\rho)\mid \pi). \end{equation}
Here the sum is taken over the set of inertial equivalence classes of cuspidal automorphic representations $\pi$ such that $\pi\otimes\eta^k\cong \pi$.

If $\pi$ does not contain $\rho$, then $R(e_\rho)$ is the zero operator on $\pi$ and we have
\[ \Tr(\eta^k \circ R(e_\rho)\vert \pi) =0, \quad\forall k\geq 1.\]
If $\pi$ contains $\rho$, then Proposition \ref{Autcusp} says $l_\pi=n$, and Proposition \ref{mul2} implies
\[\Tr(R(e_\rho)\vert \pi)=1. \]
As a consequence, we have 
\[ J_{\eta^k}^{\xi}(e_\rho)=\begin{cases} 0, \quad n\nmid k; \\  n\sum_{\pi: \pi_{\rho}\neq 0}1, \quad n\mid k. \end{cases}  \]
By Theorem \ref{fixat}, it follows that for any $(e,\xi)\in \mathbb{Z}\times (\ago_B)^S$, 
\begin{equation}
J^{e,\xi}(e_\rho)= |E_n^{irr}(\mathfrak{R})^{\Frob^{\ast}}|.
\end{equation}
This completes the proof.
\end{proof}

\subsection{A trace formula for Lie algebra}  \label{Fourier}
The key to our method of treating the Arthur-Selberg trace formula is passing to Lie algebra. An infinitesimal analogue of the Arthur-Selberg trace formula has been established in \cite{Yu2} over a function field following the strategy of the number field case established in \cite{ChauLie}. 

Let $f\in \mathcal{C}_c^{\infty}(\ggg(\AAA))$. The Lie algebra analogue of the truncated kernel function is defined  by:
 \[ j^{\ggg}_{f}(x):= \sum_{P\in \mathcal{P}_0}(-1)^{\dim \ago_P^{G}}\sum_{\delta\in P(F)\backslash G(F)}\hat{\tau}_P(H_B(\delta x) + \sum_{v\in S}s_{\delta x, v}\xi_v) \widetilde{k}^{\ggg}_{P}(\delta x),   \] 
where $\mathfrak{p}$ is the Lie algebra of $P$ and $\widetilde{k}^{\ggg}_P(x)$ is defined by 
\[\widetilde{k}^{\ggg}_P(x)= \sum_{\gamma \in \ppp(F)} f(\Ad(x^{-1})(\gamma)). \]

Let $\omega_X$ be the canonical line bundle of $X$ and $K_X$ a divisor associated to it, i.e. $\mathcal{O}_X(K_X)\cong \omega_X$. Suppose that $K_X=\sum_{v} d_v v $. 
We have (\cite[Section 4]{Tate})
$$\frac{\mathbb{A}}{F+ \prod_v\wp_v^{-d_v}\mathcal{O}_v}\cong H^{1}(X, \omega_X)\cong H^{0}(X,\mathcal{O}_{X})^{*}\cong\mathbb{F}_{q},$$ by Serre duality and the fact that $X$ is geometrically connected. 
We fix a non-trivial character $\psi_0$ of $\mathbb{F}_q$. 
Via the above isomorphisms, $\psi_0$ defines a character $\psi$ of $\AAA/F$. 
For $f\in C_c^\infty(\ggg(\AAA))$, the Fourier transform of $f$ is defined by the function 
\[\hat{f}(x):= \int_{\ggg(\AAA)}f(y)\psi(\Tr(xy))\d y, \]
where $\Tr(xy)$ is the trace of the product of $x,y$ as two matrices. 
We can decompose $$\psi=\otimes_{v}\psi_{v}, $$
with $\psi_v: F_v\rightarrow \mathbb{C}^{\times}$. It is clear that $\psi_v$ is trivial on $\wp_{v}^{-d_v}$ but not on $\wp_{v}^{-d_v-1}$. 
We define a local Fourier transformation for functions in $\mathcal{C}_{c}^{\infty}(\ggg(F_v))$ using the character $\psi_v$. 
 It is clear by definition that for any $f=\otimes_v f_v$, we have \[\hat{f}=\otimes_v \hat{f_v }. \]

The infinitesimal trace formula is the following identity that applies for any $f\in C_c^{\infty}(\ggg(\AAA))$ (Theorem 5.7 of \cite{Yu2}):
\begin{equation}\label{ITF}
J^{\ggg, e, \xi}(f)=q^{-(g-1)n^2}J^{\ggg, e, \xi}(\hat{f}).\end{equation}

\subsection{A coarse expansion}\label{CoEx}
We need to explore a coarse expansion and a vanishing result. In the case that the parameter $\xi\in (\ago_B)^S$ is set to be zero, the result is due to Chaudouard (for the coarse expansion and vanishing theorem \cite[Section 6]{Chau}). 

The coarse expansion is the decomposition of $J^{e,\xi}(f)$ based on the characteristic polynomials of elements in $G(F)$: 
 \[ J^{e,\xi}(f)=\sum_{\chi } J^{e,\xi}_{\chi}(f), \]
where $J^{e, \xi}_{\chi}(f)$ is the integral over $G(F)\backslash G(\AAA)^e$ of the function 
 \[ j_{f,\chi}^{\xi}(x):=\sum_{P\in \mathcal{P}_0} (-1)^{\dim \ago_P^{G}} \sum_{\delta\in P(F)\backslash G(F)}\hat{\tau}_P(H_B(\delta x)+\sum_{v\in S}s_{\delta x, v}\xi_v) \widetilde{k}_{P, \chi}(\delta x, \delta x). \]
In the above expression,  \[\widetilde{k}_{P,\chi}(x,x)= \sum_{\gamma\in P(F)_\chi a^\mathbb{Z}} f (x^{-1}\gamma x), \]
where $P(F)_\chi $ is the set of elements in $P(F)$ whose characteristic polynomial is $\chi$. 

Similarly, we have a coarse expansion of $J^{\ggg, e, \xi}$: 
 \[ J^{\ggg, e,\xi}(f)=\sum_{\chi} J^{\ggg, e,\xi}_{\chi}(f), \]
where the sum is taken over the set of characteristic polynomials of elements in $\ggg(F)$. 

The image of Harish-Chandra's map $H_P$ from $G(\AAA)$ to $\ago_P$ is the lattice $X_\ast(P)$, defined to be the dual of $X^{\ast}(P)$. For $P\subseteq Q$, the natural morphism $X_\ast(P)\rightarrow X_{\ast}(Q)$ makes the diagram commutes:
\[\begin{tikzcd}[column sep=-.7em]
  & G(\AAA) \arrow[dl, "H_P"'] \arrow[dr, "H_Q"] & \\
  X_{\ast}(P) \arrow[rr, ""'] & & X_{\ast}(Q)
\end{tikzcd}\]
For $e\in \mathbb{Z}\cong X_{\ast}(G)$ ($1$ is the element that sends $\det\in X^{\ast}(G)$ to $1$), we denote by $X_\ast(P)^e$ the preimage of $e$. It is the image $G(\AAA)^e$ under the map $H_P$. 
 \begin{definition}\label{adgen}
 We say that a pair \[ (e, \xi)\in X_{\ast}(G)\times (\ago_{B})^S\cong \mathbb{Z}\times (\mathbb{Q}^n)^S \]
is generic if the projection of $\sum_{v} s_v \xi_v$ in $\ago_P$ does not belong to $X_{\ast}(P)^{e}+\ago_G$ for any proper semi-standard parabolic subgroup $P$ of $G$ and any $s_v\in W$ ($v\in S$).  
\end{definition}
\begin{remark}\label{REMgen} \normalfont
It is clear from the definition that if $S\neq \emptyset$, for any $e\in \mathbb{Z}$, most $\xi\in (\ago_{B})^S$ are generic.

If $S=\emptyset$, $e\in \mathbb{Z}$ is generic if and only if $(e, n)=1$. In fact, if $(e,n)\neq 1$, we can eaily see that $0\in X_{\ast}(P)^{e}+\ago_G$, for the standard parabolic subgroup $P$ whose Levi factor is $GL_{n/(n,e)}^{(n,e)}$. If $(n,e)=1$, the arguments in \cite[Lemme 6.2.6]{Chau} show that $e$ is generic. 
\end{remark}

 The following result generalizes \cite[Théorème 6.2.1]{Chau}. For a place $v$, let $\mathcal{I}_v$ be the standard Iwahori subgroup of $G(\mathcal{O}_v)$ and  $\mathfrak{I}_v$ be the standard Iwahori subalgebra of $\ggg(\mathcal{O}_v)$. 
 \begin{theorem}\label{vanishing}
Suppose that $(e, \xi)\in \mathbb{Z}\times (\ago_B)^S$ is generic (including the case that $S=\emptyset$ where genericity means that $(e,n)=1$ as observed in Remark \ref{REMgen}) and $S\subseteq X(\mathbb{F}_q)$. 

Suppose that $f\in C_c^\infty(G(\AAA))$ is supported in $G(\mathcal{O}^S)\times\prod_{v\in S}\mathcal{I}_v$. Let $\chi\in F[t]$ be a degree $n$ monic polynomial with a non-zero constant. If $\chi$ is not of the form $(t-\alpha)^n$ for some  $\alpha\in \mathbb{F}_q^\times$, then
\[ J_{\chi}^{e,\xi}(f)=0. \]

Suppose that $\varphi\in C_c^\infty(G(\AAA))$ is supported in $\ggg(\mathcal{O}^S)\times\prod_{v\in S}\mathfrak{I}_v$. Let $\chi\in F[t]$ be a degree $n$ monic polynomial. If $\chi$ is not of the form $(t-\alpha)^n$ for some $\alpha\in  \mathbb{F}_q$, then
\[ J_{\chi}^{\ggg, e,\xi}(\varphi)=0. \]
 \end{theorem}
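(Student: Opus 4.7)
The plan is to follow Chaudouard's strategy in \cite[Section 6]{Chau}, adapting it to the presence of the parameter $\xi$ and the Iwahori level at $S$. The argument has two ingredients: (a) the support condition on the test function forces the characteristic polynomial $\chi$ to lie in $\mathbb{F}_q[t]$ (with $\chi(0)\in\mathbb{F}_q^\times$ in the group case), and (b) a combinatorial cancellation driven by the genericity of $(e,\xi)$ kills every remaining $\chi$ unless all its roots coincide with a single scalar $\alpha$.

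For step (a), suppose $j_{f,\chi}^{\xi}(x)\neq 0$ for some $x$. Any nonzero summand requires some $\gamma\in P(F)_{\chi}$ with $f(y^{-1}\gamma y)\neq 0$ for a suitable $y$. Because the support of $f$ is contained in $G(\mathcal{O}^{S})\times\prod_{v\in S}\mathcal{I}_v\subseteq G(\mathcal{O})$, the conjugate $y^{-1}\gamma y$ is everywhere integral, hence the coefficients of $\chi$ lie in $\mathcal{O}_v$ for every $v$. Combined with the geometric connectedness and projectivity of $X$ (so $H^{0}(X,\mathcal{O}_X)=\mathbb{F}_q$), this forces $\chi\in\mathbb{F}_q[t]$; in the group case everywhere-integrality of $\det\gamma$ additionally gives $\chi(0)\in\mathbb{F}_q^\times$. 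The Lie algebra case is identical, except that $\alpha=0$ is now allowed.

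For step (b), assume $\chi\in\mathbb{F}_q[t]$ is not of the form $(t-\alpha)^n$. Then the Jordan semisimple part of any $\gamma\in G(F)_\chi$ is non-central, so up to $G(F)$-conjugation we may place it inside a proper standard Levi $M_0\subsetneq G$. Fix a semisimple $\gamma_s\in M_0(F)$ whose $G(F)$-conjugacy class exhausts the semisimple parts appearing in $G(F)_\chi$, and use the Jordan decomposition to rewrite $\sum_{\gamma\in P(F)_\chi}$ as a double sum indexed by $M_0(F)$-orbits of unipotent elements in the centralizer $G_{\gamma_s}$, together with coset representatives effecting the conjugation. Substituting into $j_{f,\chi}^{\xi}(x)$ and exchanging the order of summation, the dependence on $P$ in the truncation factor collects into an inner sum
\[
\sum_{P:\, M_0\subseteq M_P}(-1)^{\dim \ago_P^{G}}\,\htau_P\!\Bigl(H_{M_0}(y)+\sum_{v\in S}s_{y,v}\,\xi_v\Bigr),
\]
evaluated at a point whose $\ago_G$-projection equals $e$. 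The genericity condition from Definition \ref{adgen} is designed precisely so that for the proper Levi $M_0$ and any choice of Weyl elements $s_{y,v}$ arising from the Iwasawa decomposition of $y$, the shifted argument avoids the loci where Langlands' combinatorial identity (cf. \cite[Lemme 5.1.1]{Chau} and its $\xi$-twisted variant in \cite{Yu2}) could fail to give zero. Hence the inner sum vanishes identically, yielding $J^{e,\xi}_\chi(f)=0$; the Lie algebra case runs in parallel via the semisimple/nilpotent decomposition.

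The main obstacle will be the Iwasawa bookkeeping: the Weyl elements $s_{y,v}$ depend on $y$, and one must verify that after reorganizing the sum the inner $\htau_P$-combination retains a form compatible with the genericity hypothesis uniformly in $y$. The Iwahori support at each $v\in S$ is essential here, since the constraint that $y^{-1}\gamma y$ lie in $\mathcal{I}_v$ pins down $s_{y,v}$ modulo a controlled finite ambiguity (coming from the Bruhat decomposition of $G(\kappa_v)$ relative to the centralizer of $\gamma_s$), allowing the combinatorial cancellation to be performed uniformly in $y$ and thereby extended through the remaining integration over $G(F)\backslash G(\AAA)^{e}$.
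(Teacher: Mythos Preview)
Your overall strategy matches the paper's: step (a) is correct and identical, and step (b) correctly identifies that the Jordan semisimple part $\sigma$ must be shown to be central via a combinatorial cancellation. However, the inner sum you write down is not the one that actually emerges, and the combinatorial identity you invoke is not the one that applies.

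When you reorganize $j_{f,\chi}^{\xi}$ using the Jordan decomposition, the sum over $P$ does \emph{not} collapse to a sum over all standard $P$ with $M_0\subseteq M_P$. The point is that $\gamma=\sigma u n$ with $u$ unipotent in the centralizer $G_\sigma$ lies in $P(F)$ precisely when $P\cap G_\sigma$ contains a prescribed parabolic $R$ of $G_\sigma$. The correct reorganization (a variant of \cite[Lemma~3.1]{Ageom}) is a double sum over standard parabolics $R$ of $G_\sigma$ and cosets $\eta\in R(F)\backslash G(F)$, and for each fixed $R$ the inner alternating sum runs over $P\in\mathcal{F}_R(M_1)=\{P\supseteq M_1:\ P_\sigma=R\}$, where $M_1$ is the minimal Levi containing $\sigma$. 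This is a genuinely different index set from yours, and the relevant cancellation is Arthur's combinatorial lemma \cite[Lemma~5.2]{Ageom}, which says the sum over $\mathcal{F}_R(M_1)$ vanishes unless the argument projects into $\ago_G+\ago_R^{G_\sigma}$. Genericity is then used to rule out this projection for every proper $R$, forcing $A_{Z_{G_\sigma}}=Z_G$ and hence $\sigma$ central (here the Iwahori condition at $v\in S$ is used to see that $\sigma$ is conjugate into $T(\kappa_v)$, so non-centrality would produce a proper split Levi). The basic Langlands identity you cite does not suffice for the set $\mathcal{F}_R(M_1)$.

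Your ``main obstacle'' paragraph is on target, but the paper makes it precise: one must show that for $v\in S$, the Weyl element satisfies $W^{Q}s_{w_s\eta x,v}=W^{Q}s\,s_{\eta x,v}$ (so the $\xi$-shift is independent of the particular $P\in\mathcal{F}_R(M_1)$ after projection). This is done in two steps. First, from $f(x^{-1}\eta^{-1}\sigma u n\,\eta x)\neq0$ one extracts, via Kottwitz's integral conjugacy and a uniqueness statement for the factorization in $\sigma N_1$, that $\eta x\in R(F)M_{1,\sigma}(\mathbb{A})N_{1,\sigma}(\mathbb{A})G(\mathcal{O})$. Second, the Iwahori support gives $\dot{k}^{-1}\sigma\dot{k}\in B(\kappa_v)$, and a short Bruhat argument in $G_{\sigma_0}(\kappa_v)$ yields $\dot{k}\in P_{1,\sigma}(\kappa_v)\,W\,B(\kappa_v)$; since $w_sP_{1,\sigma}\subseteq Qw_s$, the desired coset equality follows. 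Without this control, the $\htau_P$-arguments in the inner sum depend on $P$ through $s_{w_s\eta x,v}$, and Arthur's lemma cannot be applied.
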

\begin{proof}
Our proof adopts Chaudouard's strategy and extends it to include the treatment of an additional parameter $\xi$. We only present the proof for the group version, as the same method applies to the Lie algebra case.

First, we note that in the definition of $\widetilde{k}_{P,\chi}(x,x)$, the sum over $a^\mathbb{Z}$ can be ignored because the support of $f$ is contained in the degree $0$ part.
Let $\chi$ be a monic polynomial of degree $n$ whose constant term is non-zero.   
Let $\sigma\in G(F)$ be a semi-simple element with characteristic polynomial $\chi$.
We observe that if $J_{\chi}^{e,\xi}(f)\neq 0$, then the coefficients of the characteristic polynomial of $\sigma$ belong to $\mathcal{O}\cap F=\mathbb{F}_q$. In particular, it splits after a separable extension of $F$.  

We have the following lemma, a variant of Lemma 3.1 of \cite{Ageom}, and it can be proved similarly. The fact that $\chi$ splits over a separable extension implies that the Jordan-Chevalley decomposition holds for elements in $G(F)$ whose characteristic polynomial is $\chi$.  
\begin{lemm}
Let $\chi$ be a monic polynomial of degree $n$ whose constant term is non-zero which splits after a separable extension of $F$.     
Let $\sigma\in G(F)$ be a semi-simple element with characteristic polynomial $\chi$. Let $M_1$ be the minimal Levi subgroup that contains $\sigma$. We may choose $\sigma$ so that $M_1$ is standard. 
Then 
 \(j_{f,\chi}^{\xi}(x) \) equals the sum over standard parabolic subgroup $R$ of $G_{\sigma}$ and elements $\eta\in R(F)\backslash G(F)$ of the product of 
 \begin{equation} \sum_{u\in \mathcal{U}_{M_R}(F)}\sum_{n\in N_R(F)} f(x^{-1}\eta^{-1}\sigma un\eta x  ) \end{equation}
 with 
 \begin{equation}\label{sigmaP}\sum_{P\in\mathcal{F}_R(M_1)}(-1)^{\dim \ago_P^G}\htau_P(H_P(\eta x) + \sum_{v\in S}s^{-1}s_{w_s\eta x, v}\xi_{v} ),  \end{equation}
where $\mathcal{F}_R(M_1)$ is the set of parabolic subgroups $P$ of $G$ defined over $F$ containing $M_1$ and such that $P_\sigma=R$, $\mathcal{U}_{M_R}(F)$ is the set of unipotent element in $M_R(F)$, 
and $s\in W$ with representative $w_s\in G(\mathbb{F}_q)$ is an element such that $w_sPw_s^{-1}$ is standard.  
\end{lemm}
Let $P_1$ be the standard parabolic subgroup of $G$ whose Levi subgroup is $M_1$. Let $N_1$ be the unipotent radical of $P_1$. We suppose that $\sigma\in G(\mathbb{F}_q)$. 

If $J_{\chi}^{e, \xi}(f)\neq 0$, then for some standard parabolic subgroup $R$ of $G_\sigma$, there are $x\in G(\AAA)$, $\eta\in R(F)\backslash G(F)$, $n\in N_R(F)$ and $u\in \mathcal{U}_{M_R}(F)$ such that \begin{equation}\label{xeta} f(x^{-1}\eta^{-1}\sigma u n \eta x )\neq 0. \end{equation}
We can choose a representative $\eta\in G(F)$ so that $u\in N_B(F)\cap \mathcal{U}_{M_R}(F)$. We have $u\in B(F)\cap M_R(F)\subseteq P_1(F)\cap M_R(F)$.  Since $M_{1, \sigma}$ is the Levi subgroup of $P_1\cap M_R$ that is maximal anisotropic,  it does not contain any rational unipotent element. Thefore, we can conlude that $u\in N_1(F)\cap M_R(F)$.  
Let $\eta x=n_1m_1k$ with $n_1\in N_1(\AAA)$, $m_1\in M_1(\AAA)$ and $k\in G(\mathcal{O})$ be the Iwasawa decomposition of $\eta x$, then we have 
\[ m_1^{-1}\sigma m_1 \in M_1(\mathcal{O}). \]
By Kottwitz \cite[Proposition 7.1]{Ko3}, we deduce that $m_1^{-1}\sigma m_1 $ and $\sigma$ are conjugated by an element in $M_1(\mathcal{O})$. Therefore,
\[m_1\in M_{1, \sigma}(\AAA)M_1(\mathcal{O}). \]
By modifying $m_1$ and $k$, we can suppose that $m_1\in M_{1,\sigma}(\AAA)$. 
We have \begin{equation}\label{nsig} n^{-1}_1\sigma (m_1^{-1}  unm_1  ) n_1 \in  \sigma N_1(\mathcal{O}).   \end{equation}
The proposition \cite[Proposition 7.2]{Yu2} for the ring $A=\mathbb{A}$ says that the pair $([n_1], m_1^{-1}  unm_1)$ is unique for the image of $n_1$ in $N_{1,\sigma}(\AAA)\backslash N_1(\AAA)$. The same Proposition for the ring $A=\mathcal{O}$ implies that we can choose a representative of $[n_1]$ in $N_1(\mathcal{O})$. Therefore
\[n_1\in N_{1,\sigma}(\AAA)N_1(\mathcal{O}). \]
We conclude that \begin{equation}\eta x\in R(F)M_{1,\sigma}(\AAA)N_{1,\sigma}(\AAA)G(\mathcal{O}). \end{equation}

 Now we look at the component at a place $v\in S$. Let $\eta x$, $u$, and $n$ be as above. 
 We want to prove that \begin{equation}W^{Q} s_{w_s \eta x, v}=W^{Q} s s_{\eta x, v}, \end{equation}
 where $Q=w_{s}Pw_s^{-1}$ is standard. 

Suppose that $\eta x_v=rk$ with $r\in R(F_v)$ and $k\in G(\mathcal{O}_v)$. The condition  on the support of $f$ implies that, for $J_{\chi}^{e,\xi}(f)$ to be non-zero, we must have
\[k^{-1}\sigma k\in \mathcal{I}_v, \]
because the semisimple part of $(\eta x)^{-1}\sigma un \eta x\in \mathcal{I}_v$ lies in $\mathcal{I}_v$. Therefore, 
\[ \dot{k}^{-1}\sigma \dot{k}\in B(\kappa_v),  \]
where $\dot{k}$ is the image of $k$ in $G(\kappa_v)$
\begin{lemm}
If $\dot{k}\in G(\kappa_v)$ be an element such that 
\[ \dot{k}^{-1}\sigma \dot{k}\in B(\kappa_v).  \]
Then $\dot{k}\in P_{1,\sigma}(\kappa_v)WB(\kappa_v)$. 
\end{lemm}
\begin{proof}
The condition implies that $\sigma$ is contained in a split maximal torus over $\kappa_v$. Hence, there exists  $m\in M_1(\kappa_v)$ such that 
\[m^{-1}\sigma m = \sigma_0 \in T(\kappa_v). \]
There are $w\in  W$ and $b\in {B}_1(\kappa_v)$ such that 
\[ w^{-1}b^{-1}\dot{k}^{-1} m^{-1} \sigma_0 m \dot{k}bw=\sigma_0. \]
We deduce that 
\[m\dot{k}bw\in G_{\sigma_0}(\kappa_v).\]
We can assume that $wB_{\sigma_0}w^{-1}\subseteq B$ (because $w^{-1}Bw\cap G_{\sigma_0}$ is a semistandard Borel subgroup of $G_{\sigma_0}$).  
Using Bruhat decomposition for $G_{\sigma_0}(\kappa_v)$, we obtain that $\dot{k}\in m^{-1}B_{\sigma_0}(\kappa_v)wB(\kappa_v)$. 
Since $m^{-1}B_{\sigma_0}\subseteq P_{1,\sigma}$, we are done. 
\end{proof}
 Suppose $w\in W$ be an Weyl element such that $\dot{k}\subseteq P_{1,\sigma}(\kappa_v)wB(\kappa_v)$.  
Note that \[w_s P_{1,\sigma}(F_v) \subseteq w_s R(F_v)\subseteq Q(F_v)w_s,\]
therefore by Bruhat decomposition, we have as desired:
\[W^{Q}s_{w_s\eta x, v}= W^{Q}ss_{\eta x, v}.  \]

 For $H\in \ago_{M_1}$, we denote $[H]_P$ for its projection in $\ago_P$. 
 Let \[\eta x\in R(F)mN_B(\AAA)G(\mathcal{O}),\] for some $m\in M_R(\AAA)$.
  Then \[  H_P(\eta x)=[H_{M_1}(m)]_P,\]
 and for every $v\in S$, we have  \[ [s^{-1}s_{w_s\eta x, v}\xi_v]_P=[s_{\eta x, v}\xi_v]_P.  \]
They imply that 
\[ \htau_P(H_P(\eta x)+ \sum_{v\in S}s^{-1}s_{w_s\eta x,v}\xi_v)= \htau_P(H_{M_1}(m)+ \sum_{v\in S}s_{\eta x, v}\xi_v). \]
Thus, we can apply Arthur's combinatorial lemma \cite[Lemma 5.2]{Ageom}, which says that the equation \eqref{sigmaP} vanishes if 
\begin{equation}\label{HRM}
H_R(m)+ \sum_{v\in S}s_{\eta x, v}\xi_v\notin \ago_{G}+\ago_{R}^{G_\sigma}. \end{equation}

If $S\not= \emptyset$, by our definition of being generic, \eqref{HRM} always holds except when $A_{Z_{G_\sigma}}={Z_G}$, i.e. the maximal split subtorus of $Z_{G_\sigma}$ equals $Z_G$. 
However since $S\subseteq X(\mathbb{F}_q)$, the condition that $\sigma$ is conjugate to an element in $\mathcal{I}_v$ implies that  $\sigma\in T(\mathbb{F}_q)$, we deduce that $\sigma\in Z_{G}(\mathbb{F}_q)$. 

 If $S=\emptyset$, then $(e,n)=1$ and Chaudouard (\cite[Lemme 6.2.6]{Chau})  implies the same result. 
Note that there is a small typo towards the end of the proof of \cite[Lemme 6.2.6]{Chau} where the equation ``\(\sum_{j=1}^{d_i}x_{j,i}-\frac{n_i e}{n}=0\)" should read as 
``\(\sum_{j=1}^{d_i}x_{j,i}-\frac{d_in_i e}{n}=0.\)" The other arguments that follow are unchanged. 
 \end{proof}

\section{Hitchin moduli spaces}\label{B}
We introduce the Hitchin moduli spaces and moduli of Higgs bundles with parabolic structures. These spaces have been constructed respectively by N. Nitsure \cite{Nitsure} and K. Yokogawa \cite{Yokogawa}. To obtain a variety, a stability condition must be imposed. With generic parabolic weights, stability is equivalent to semistability.

\subsection{Moduli of Hitchin bundles}
In this section, we recall the notion of semistable Hitchin bundles and their coarse moduli space constructed by Nitsure \cite{Nitsure} using GIT theory. It is a variety defined over $\mathbb{F}_q$ by Nitsure's construction.  


Recall that we have chosen a canonical divisor $K_X$ over $X$, i.e., a divisor associated to the canonical line bundle or the invertible sheaf of algebraic $1$-forms. 
Let $S$ be a set of closed points of $X$. Let $D$ be the divisor \[D=K_X+\sum_{v\in S}v,\]
over $X$. A $D$-Hitchin bundle $(\mathcal{E}, \varphi)$ over $X$ consists of a vector bundle $\mathcal{E}$ together with a bundle morphism \[\varphi: \mathcal{E}\longrightarrow \mathcal{E}(D):=\mathcal{E}\otimes\mathcal{O}_X(D).  \]
We will simply call it a Hitchin pair or a Hitchin bundle if the $D$ divisor is clear in context. 
The degree and rank of $(\mathcal{E}, \varphi)$ are defined to be that of $\mathcal{E}$. 
It is called semistable if for any Hitchin subbundle $(\mathcal{F}, \varphi')$, i.e. a Hitchin bundle such that $\mathcal{F}$ is a subbundle of $\mathcal{E}$ and $\varphi'=\varphi|_{\mathcal{F}}$, we have \[\frac{\deg(\mathcal{F})}{\mathrm{rank}(\mathcal{F})}\leq \frac{\deg(\mathcal{E})}{\mathrm{rank}(\mathcal{E})}.\]
It is called stable if equality is never achieved for any non-zero proper Hitchin subbundle. It is called geometrically stable if its restriction to $\closure{X}$ is stable. 

When $(n,e)=1$, a Hitchin bundle of rank $n$ and degree $e$ is semi-stable if and only if it is stable, if and only if it is geometrically stable. In this case, Nitsure proves that there is a variety over ${\mathbb{F}}_q$ whose $\mathbb{F}_{q^k}$-points is the set of isomorphism classes of semistable $D$-Hitchin bundles over $X\otimes\mathbb{F}_{q^k}$ of rank $n$ and degree $e$ by constructing the coarse moduli space of $D$-Hitchin bundles.  
We denote it by \[\mathfrak{M}_n^{e}(D). \] 
We observe that the variety  $\mathfrak{M}_n^{e}(D)$ depends only on the line bundle associated with the divisor $D$.

\subsection{Moduli space of parabolic Higgs bundles}\label{parabolic}
In this section, we present the coarse moduli space of semistable quasi-parabolic Higgs bundles constructed by Yokogawa in \cite{Yokogawa0}. In fact, we only introduce its set of $\mathbb{F}_q$-points. 
Our exposition follows the approach in \cite[Section 4]{HS} for flagged principal bundles. For a more comprehensive account of the stack version without imposing the stability condition, see \cite[Section 2]{Yun}. We define semistability using torsors and root systems. This enables us to compare with the Arthur-Selberg trace formula in the language of adèles.

Let $S\subseteq X(\mathbb{F}_q)$ be a set of $\mathbb{F}_q$-rational points.  
We identify it with a set of closed points. As before, we set $D=K_X+\sum_{x\in S}x$. 
A quasi-parabolic Higgs bundle of rank $n$ over ${X}$ with quasi-parabolic structures in $S$ is a triple \[(\mathcal{V}, \varphi, (i_x)_{x\in {S}})\] where (1) $(\mathcal{V}, \varphi)$ is a $D$-Hitchin bundle of rank $n$, that is, a vector bundle $\mathcal{V}$ with a bundle morphism \[ \varphi: \mathcal{V}\longrightarrow \mathcal{V}(D):= \mathcal{V}\otimes\mathcal{O}_{{X}}(D). \] 
(2) For each point $x\in {S}$, $i_x$ is a flag in the $\mathbb{F}_q$-vector space $\mathcal{V}_x$. Here $\mathcal{V}$ is the fiber of $\mathcal{V}$ in $x$, and a flag is a strictly increasing chain of vector spaces of length $n$:
\[i_x: 0=L_0\subsetneq L_1\subsetneq L_2\subsetneq \cdots\subsetneq L_n= \mathcal{V}_x.\]
We require that the fiber map $\varphi_x$ of $\varphi$ in $x$ be compatible with $i_x$ for every point $x\in S$, meaning that
\[\varphi_x(L_i)\subseteq L_i, \quad \forall i\geq 1.  \]
Here, the space $L_i$ determines a $\mathbb{F}_q$-linear subspace of $\mathcal{V}(D)_x$, still denoted by $L_i$. If moreover, $\varphi_{x}(L_i)\subseteq L_{i-1}$ for all $x\in S$ and all $i$, we call it a parabolic Higgs bundle with parabolic structures in $S$. 


We can rephrase the above definitions in terms of (right) $G$-torsors. 
A vector bundle  $\mathcal{V}$ over $X$ of rank $n$ corresponds to the right $G$-torsor $\mathcal{E}:=\underline{\mathrm{Isom}}(\mathcal{O}_X^n, \mathcal{V})$. 
 For a point $x\in S$, a flag $i_x$ of the vector space $\mathcal{V}_x$ corresponds to a reduction of the structure group $\mathcal{E}_{x, B}$ of $\mathcal{E}_{x}$ to the Borel subgroup $B$ of $G$. 
A section \[ \varphi \in H^{0}(X, \ad(\mathcal{E})\otimes\mathcal{O}_X(D))\] can be identified with a morphism of vector bundles $\mathcal{V}\rightarrow \mathcal{V}\otimes\mathcal{O}_X(D)$. Here \[ \Ad(\mathcal{E})=\mathcal{E}\times^{G} \ggg:=\mathcal{E}\times \ggg/G\] with $G$ acting by the adjoint action on $\ggg$. 
 If the Hitchin morphism preserves the flag, the section takes value in $\mathcal{E}_{x, B}\times^{B}\bbb$. If, in addition, the morphism shifts the flag by degree 1 (i.e., is nilpotent at this point with respect to the flag structure), then the section takes a value in $\mathcal{E}_{x, B}\times^{B}\nnn$. A filtration 
\[0=\mathcal{F}_0\subsetneq\mathcal{F}_1\subsetneq \ldots \subsetneq \mathcal{F}_r=\mathcal{V} \]
of vector sub-bundles corresponds to a reduction of the structure group $\mathcal{E}$ from $G$ to the standard parabolic subgroup $P$ corresponding to the following partition of $n$: \[(\mathrm{rank}(\mathcal{F}_1/\mathcal{F}_0), \ldots, \mathrm{rank}(\mathcal{F}_r/\mathcal{F}_{r-1})).\] 
If a global section $\varphi$ of $\ad(\mathcal{E})\otimes\mathcal{O}_X(D)$ is obtained from a global section $\varphi_P$ of $\ad(\mathcal{E}_P)\otimes\mathcal{O}_X(D)$, we say that $(\mathcal{E}_P, \varphi_P)$ is a Hitchin reduction of $(\mathcal{E}, \varphi)$ to $P$.

Now we define semistability using the language of $G$-torsors.

Let $x\in {S}$. We continue to use the above notation. 
Any $B$-equivariant trivialization $\mathcal{E}_{x, B}\xrightarrow{\sim} B$ defines an isomorphism of $\Aut_B(\mathcal{E}_{x, B})$ with $B$. The isomorphism depends on the $B$-equivariant trivialization only by an inner automorphism of $B$. So we deduce a canonical isomorphism \[X^{*}(B)\cong X^{*}(\Aut_B(\mathcal{E}_{x, B})).\] 
Let $(\mathcal{E}_P, \varphi_P)$ be a Hitchin reduction of $(\mathcal{E},\varphi)$ to a standard parabolic subgroup $P$ of $G$. Let $\mathcal{E}_{P, x}$ be the fiber of $\mathcal{E}_P$ in $x$, 
we have a canonical isomorphism \[X^{\ast}(\Aut_P(\mathcal{E}_{P, x}))\cong X^{*}(P).\]  Since $\Aut_P(\mathcal{E}_{P,x})$ is a parabolic subgroup of $\Aut_G(\mathcal{E}_{x})$, and $\Aut_B(\mathcal{E}_{x, B})$ is a Borel subgroup of $\Aut_G(\mathcal{E}_{x}),$ there is an $a\in \Aut_G(\mathcal{E}_{x})$ such that \[a^{-1}\Aut_P(\mathcal{E}_{P,x})a \supseteq  \Aut_B(\mathcal{E}_{x, B}).\] Using these isomorphisms and the element $a$, we obtain a map $i_{\mathcal{E}}: X^{*}(P)\rightarrow X^{*}(B)$ and hence a linear map: $$s_{\mathcal{E}_{P, x}}: \ago_B \longrightarrow \ago_P. $$

Let $H_P(\mathcal{E}_P)$ be the degree map defined to be the element in $\ago_P$ such that 
for any $\mu\in X^{*}(P)$, we have \[\langle \mu, H_P(\mathcal{\mathcal{E}}_P ) \rangle =  \deg \mu(\mathcal{E}_P).  \]


\begin{definition}\label{421}
Let $\xi=(\xi_x)_{x\in S}\in (\ago_B)^S\cong (\mathbb{Q}^n)^S$. 
A quasi-parabolic Higgs bundle $(\mathcal{E}, \varphi, (i_x))$ is called $ {\xi}$-semistable if for any standard parabolic subgroup $P$ of $G$, any Hitchin's parabolic reduction $(\mathcal{E}_P, \varphi_P)$ of $(\mathcal{E}, \varphi)$, and any $\varpi\in \hat{\Delta}_P$, we have 
\begin{equation}\label{semistabilitye}
\langle \varpi, H_P(\mathcal{E}_P)+ \sum_{x\in S} s_{\mathcal{E}_P, x}\xi_x \rangle \leq 0 .\end{equation}
It is  $ {\xi}$-stable if the above inequality is strict whenever $P\neq G$. 
\end{definition}
\begin{remark}\normalfont
In fact, our definition coincides with the one given by Simpson in \cite{Simpson} and Yokogawa in \cite{Yokogawa0}, although this may not be immediately evident.
This can be seen by observing that, in our definition, it suffices to choose $P$ as the maximal standard parabolic subgroup. This is because for each $\varpi\in \hat{\Delta}_P$, there exists a parabolic subgroup $Q$ such that $\hat{\Delta}_Q=\{\varpi\}$. For a maximal parabolic subgroup $Q$, a Hitchin's reduction $(\mathcal{E}_Q, \varphi_Q)$ of $(\mathcal{E}, \varphi)$ is just a sub-Hitchin's bundle $(\mathcal{F}, \varphi')$. 
Using the explicit expression of the fundamental weights, \eqref{semistabilitye} is equivalent to 
\[  \frac{ \deg(\mathcal{F}) + \sum_{x\in S}\sum_{i\in I_x}\xi_{x, i} }{\mathrm{rank}(\mathcal{F})}\leq  \frac{ \deg(\mathcal{E}) + \sum_{x\in S}\sum_{ i=1}^n \xi_{x,i} }{n},\]
where $I_x\subseteq \{1, 2, \ldots, n\}$ is the subset of cardinalilty $\mathrm{rank}(\mathcal{F})$, consisting of all $1\leq i\leq n$ such that \[ L_i\cap \mathcal{F}_x \neq L_{i-1}\cap \mathcal{F}_x. \]  
\end{remark}


\begin{prop}
If $(e,\xi)$ is generic as defined in Definition \ref{adgen}, then any $\xi$-semistable quasi-parabolic Higgs bundle is automatically $\xi$-stable and geometrically $\xi$-stable. 
\end{prop}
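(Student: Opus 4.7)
The plan is to argue by contradiction, using a maximal-parabolic sharpening of the semistability inequality to produce a violation of the genericity in Definition~\ref{adgen}. To handle both $\xi$-stability over $X$ and geometric $\xi$-stability in one stroke, I would first reduce to the case of an algebraically closed base field: on the one hand, the maximally destabilizing Hitchin reduction of a quasi-parabolic Higgs bundle (the analogue of the first step in a Harder--Narasimhan filtration, available in Yokogawa's framework) is uniquely determined and hence $\Gal(\closure{\mathbb{F}}_q/\mathbb{F}_q)$-invariant, so $\xi$-semistability over $X$ passes to $\closure{X}$; on the other hand, $\xi$-stability over $\closure{X}$ trivially entails $\xi$-stability over $X$ because reductions over $X$ form a subset of those over $\closure{X}$. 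Both the statement of genericity and the argument that follows are insensitive to the base field, so the problem reduces to proving the bare implication $\xi$-semistable $+$ $(e,\xi)$ generic $\Rightarrow$ $\xi$-stable over any base.

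Assume $(\mathcal{E},\varphi,(i_x))$ of degree $e$ is $\xi$-semistable but not $\xi$-stable, so that some proper standard parabolic $P$, Hitchin reduction $\mathcal{E}_P$, and $\varpi_0\in\hat\Delta_P$ realize equality in the semistability condition:
\[\langle \varpi_0,\, H_P(\mathcal{E}_P)+\sum_{x\in S}s_{\mathcal{E}_P,x}\xi_x\rangle=0.\]
Let $Q$ be the unique standard maximal parabolic with $\hat\Delta_Q=\{\varpi_0\}$, so $P\subseteq Q$, and let $\mathcal{E}_Q$ be the Hitchin reduction obtained by extension of structure group. Under the projection $\ago_P\twoheadrightarrow \ago_Q$, by functoriality $H_P(\mathcal{E}_P)$ maps to $H_Q(\mathcal{E}_Q)$ and each $s_{\mathcal{E}_P,x}\xi_x$ maps to $s_{\mathcal{E}_Q,x}\xi_x$, while $\varpi_0$ already factors through this projection; hence the same equality holds with $Q$ in place of $P$. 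Since $Q$ is maximal, $\hat\Delta_Q=\{\varpi_0\}$ spans $(\ago_Q^G)^\ast$ and the fundamental weights vanish on $\ago_G$, so the kernel of $\varpi_0$ on $\ago_Q$ equals $\ago_G$. We deduce
\[H_Q(\mathcal{E}_Q)+\sum_{x\in S}s_{\mathcal{E}_Q,x}\xi_x\in\ago_G\subseteq \ago_Q.\]

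Now $H_Q(\mathcal{E}_Q)\in X_\ast(Q)$ represents the degree $e$ under $X_\ast(Q)\twoheadrightarrow X_\ast(G)=\mathbb{Z}$, and each $s_{\mathcal{E}_Q,x}\xi_x$ is by construction the projection to $\ago_Q$ of some Weyl translate $s_x\xi_x\in\ago_B$, where $s_x\in W$ arises from trivializing the $B$-reduction on the fiber $\mathcal{E}_{Q,x}$ relative to the $Q$-reduction. The displayed equation therefore says that the projection of $\sum_{x\in S}s_x\xi_x$ to $\ago_Q$ lies in $X_\ast(Q)^e+\ago_G$ (up to a sign absorbed into the $\ago_G$-component). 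This directly contradicts the genericity of $(e,\xi)$ applied to the proper semi-standard parabolic $Q$, simultaneously yielding $\xi$-stability and, via the reduction step, geometric $\xi$-stability.

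The main obstacle I anticipate is verifying the dictionary identifying the abstract map $s_{\mathcal{E}_Q,x}:\ago_B\to\ago_Q$ defined in Definition~\ref{421} (through the canonical isomorphism $X^\ast(\Aut_Q(\mathcal{E}_{Q,x}))\cong X^\ast(Q)$ and the comparison element $a\in\Aut_G(\mathcal{E}_x)$) with the projection of an honest Weyl translate of $\xi_x$ as appearing in Definition~\ref{adgen}, and tracking the sign conventions carefully enough to land in $X_\ast(Q)^e+\ago_G$ rather than some twin coset. The Galois-descent input for geometric stability is a standard uniqueness argument for the canonical destabilizing reduction of a parabolic Higgs bundle, and the combinatorics of the passage from $P$ to $Q$ is routine once the dictionary is in hand.
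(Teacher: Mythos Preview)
Your proposal is correct and follows essentially the same approach as the paper. The core step---assuming equality $\langle\varpi_0,H_P(\mathcal{E}_P)+\sum_x s_{\mathcal{E}_P,x}\xi_x\rangle=0$, passing to the maximal parabolic $Q$ with $\hat\Delta_Q=\{\varpi_0\}$, and reading off a violation of genericity---is exactly the paper's argument; the paper even lands in $X_\ast(Q)^{-e}$ and leaves the same sign reconciliation with Definition~\ref{adgen} implicit, so the concern you flag is shared rather than new. Your handling of geometric $\xi$-stability via uniqueness of the Harder--Narasimhan reduction and Galois descent is also the paper's argument, only placed at the beginning rather than the end; the paper first proves $\xi$-semistable $\Leftrightarrow$ $\xi$-stable over $\mathbb{F}_q$, observes the identical argument works over $\closure{\mathbb{F}}_q$, and then descends the canonical destabilizing filtration, whereas you reduce to the algebraically closed case upfront---a purely organizational difference.
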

\begin{proof}
First, we show that $\xi$-stability coincides with $\xi$-semistability. 
It suffices to show that for any $G$-torsor $\mathcal{E}$ whose associated vector bundle has degree $e$ and any reduction of the structure group $\mathcal{E}_P$ of $\mathcal{E}$ to a standard parabolic subgroup $P$ different from $G$, we have
\begin{equation}
\langle \varpi, H_P(\mathcal{E}_P)+ \sum_{x\in S} s_{\mathcal{E}_P, x}\xi_x \rangle \neq 0 .\end{equation}
If \[\langle \varpi_\alpha, H_P(\mathcal{E}_P)+ \sum_{x\in S} s_{\mathcal{E}_P, x}\xi_x \rangle = 0,\]
let $Q\supseteq P$ be a parabolic subgroup such that \( \Delta_Q=\{\varpi_\alpha\},  \) 
then the projection of $\sum_{x\in S} s_{\mathcal{E}_P, x}\xi_x$ to $\ago_Q^G$ equals the projection of $-H_P(\mathcal{E}_P)$. The latter belongs to $X_\ast(Q)^{-e}$. 

The above arguments also show that geometric $\xi$-stability coincides with geometric $\xi$-semistability. 
To show that a $\xi$-stable quasi-parabolic Higgs bundle over $X$ is geometrically $\xi$-stable, it suffices to show that it is geometrically $\xi$-semistable. 
If it is not $\xi$-semistable, the Harder-Narasimhan filtration over $\closure{X}$ is non-trivial. As the Harder-Narasimhan filtration is unique, it must be fixed by the $\Gal(\closure{\mathbb{F}}_q/\mathbb{F}_q)$-action. This shows that it descends to $X$, leading to a contradiction.  
\end{proof}

Yokogawa constructed a variety over $\mathbb{F}_q$ that is a coarse moduli space classifying the isomorphism classes of geometrically ${\xi}$-stable quasi-parabolic Higgs bundles $(\mathcal{E}, \varphi, (i_x)_{x\in {S}})$ with $\mathcal{E}$ being of rank $n$ and degree $e$. We denote this variety by \[^{q}\mathcal{M}_{n, {S}}^{e,{\xi}}.\] 
We use $\mathcal{M}_{n,{S}}^{e,{\xi}}$ for the subvariety of parabolic Higgs bundles (\cite{Yokogawa} Remark 5.1, p. 22).

\begin{remark}\label{qpw}\normalfont
Yokogawa's results hold under the assumption (as we define it) that, for any positive root $\alpha$, one has
\[1>\langle \alpha, {\xi}_x \rangle>0. \]
Explicitly, suppose $\xi_x=(\xi_{x,1}, \ldots, \xi_{x,n})\in \mathbb{Q}^n$, 
this means that
${\xi}_{x, i}>{ \xi}_{x, j}>{\xi}_{x, i}-1$, for all $i>j$. In particular, it does not include the case where for some positive root $\alpha$, we have $\langle\alpha, {\xi}_{x}\rangle=0$. However, this is not an issue because $\langle\varpi, H_P(\mathcal{E}_P)+ \sum_{x\in S} s_{\mathcal{E}_P, x}\xi_x \rangle$ takes discrete values when $\mathcal{E}_P$ varies, therefore,  in the generic case, one can slightly vary the parameter $\xi$ without affecting the stability.    
\end{remark}

\subsection{$\mathbb{G}_m$-action}

The variety $\mathcal{M}_{n, S}^{e,  {\xi}}$ admits a $\mathbb{G}_m$-action obtained from its scalar action on the Higgs fields. A Higgs field is the morphism $\varphi$ of a parabolic Higgs bundle $(\mathcal{V}, \varphi, (i_x)_{x\in {S}})$. 
The following theorem is an analogue of \cite[Theorem 2.1]{GPHS} which concerns the coarse moduli space of Higgs bundles without parabolic structures. Related materials are carefully presented in \cite[Section 3]{HH} for Hitchin moduli space without parabolic structures. In our case, $\mathcal{M}_{n, S}^{e,  {\xi}}$ is not necessarily a fine moduli space, and we are over the field $\mathbb{F}_q$ which is not algebraically closed.

\begin{theorem}\label{PGm}
Suppose that $(e,{\xi})\in \mathbb{Z}\times (\ago_B)^{S}$ is admissible and generic as defined in Definition \ref{adgen} and Definition \ref{admissible}. Then we have
 \[|\mathcal{M}_{n, S}^{e,  {\xi}}(\mathbb{F}_q)|= q^{\frac{1}{2}\dim \mathcal{M}_{n,S}^{e,  {\xi}}} |(\mathcal{M}_{n,S}^{e,  {\xi}})^{\mathbb{G}_m}(\mathbb{F}_q)|. \]
\end{theorem}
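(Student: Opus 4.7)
The plan is to combine the Białynicki--Birula decomposition with the algebraic symplectic structure on $\mathcal{M}_{n,S}^{e,\xi}$: the symplectic form has weight one under the $\mathbb{G}_m$-action, which forces each attracting cell of the BB decomposition to have the same codimension, namely half the total dimension.

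Under the generic hypothesis, stability coincides with semistability (proved in the previous proposition), so $\mathcal{M}_{n,S}^{e,\xi}$ is smooth quasi-projective over $\mathbb{F}_q$. By Yokogawa's construction it carries a natural algebraic symplectic form $\omega$, obtained from Serre duality applied to the tangent complex of a parabolic Higgs bundle. The $\mathbb{G}_m$-action $t\cdot(\mathcal{V},\varphi,(i_x))=(\mathcal{V},t\varphi,(i_x))$ preserves the parabolic structure, and since the Higgs-field direction in the tangent complex carries weight one, $t^{\ast}\omega=t\omega$. Moreover, the parabolic Hitchin map $h$ to the Hitchin base $\mathcal{A}=\bigoplus_{i=1}^{n}H^{0}(X,\mathcal{O}_{X}(iD))$ is proper, and $\mathbb{G}_m$ acts on $\mathcal{A}$ with strictly positive weights, so every point of $\mathcal{A}$ contracts to $0$ as $t\to 0$. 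Properness of $h$ then guarantees that $\lim_{t\to 0}t\cdot x$ exists inside $\mathcal{M}_{n,S}^{e,\xi}$ for every $x$.

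Applying the Białynicki--Birula theorem to the smooth $\mathbb{F}_q$-variety $\mathcal{M}_{n,S}^{e,\xi}$ with this contracting $\mathbb{G}_m$-action, the fixed locus $(\mathcal{M}_{n,S}^{e,\xi})^{\mathbb{G}_m}$ is a closed $\mathbb{F}_q$-subvariety and, for each of its connected components $F$, the attractor
\begin{equation*}
\mathcal{M}^{+}(F)=\{x\in\mathcal{M}_{n,S}^{e,\xi}:\lim_{t\to 0}t\cdot x\in F\}
\end{equation*}
is a locally closed $\mathbb{F}_q$-subvariety, Zariski-locally trivial as an affine bundle over $F$ of rank $d_{F}^{+}:=\dim T_{x}^{+}$, the dimension of the positive-weight subspace of $T_{x}\mathcal{M}_{n,S}^{e,\xi}$ at any $x\in F$. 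Point-counting cell by cell gives
\begin{equation*}
|\mathcal{M}_{n,S}^{e,\xi}(\mathbb{F}_q)|=\sum_{F}q^{d_{F}^{+}}|F(\mathbb{F}_q)|.
\end{equation*}

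Now the symplectic argument pins down $d_{F}^{+}$. At $x\in F$, decompose $T_{x}\mathcal{M}_{n,S}^{e,\xi}=\bigoplus_{i\in\mathbb{Z}}T_{x}^{(i)}$ into $\mathbb{G}_m$-weight spaces, so $T_{x}^{(0)}=T_{x}F$. Since $\omega$ has weight one, it pairs $T_{x}^{(i)}$ nondegenerately with $T_{x}^{(1-i)}$, giving $\dim T_{x}^{(i)}=\dim T_{x}^{(1-i)}$. Reindexing $j=1-i$ yields
\begin{equation*}
\dim T_{x}^{+}=\sum_{i\geq 1}\dim T_{x}^{(i)}=\sum_{j\leq 0}\dim T_{x}^{(j)}=\dim T_{x}F+\dim T_{x}^{-},
\end{equation*}
and combining with $\dim\mathcal{M}_{n,S}^{e,\xi}=\dim T_{x}^{+}+\dim T_{x}F+\dim T_{x}^{-}$ forces $d_{F}^{+}=\dim T_{x}^{+}=\tfrac{1}{2}\dim\mathcal{M}_{n,S}^{e,\xi}$ for every $F$. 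Substituting gives the claimed identity. The main obstacle I expect is technical rather than conceptual: the moduli space is only a coarse moduli space and is quasi-projective rather than projective, so Białynicki--Birula needs to be invoked in this generalized setting, and one must know Zariski-local triviality of the attracting cells in the parabolic situation. Properness of the Hitchin map is the substitute for projectivity, and $\mathbb{F}_q$-rationality of the fixed components and cells follows automatically because $\mathbb{G}_m$ is split and the action is $\mathbb{F}_q$-defined; the parallel statements are standard in the unparabolic case (\cite{GPHS}, \cite{HH}) and the parabolic versions follow from the references on which Yokogawa's construction rests.
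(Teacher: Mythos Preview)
Your proposal is correct and follows essentially the same route as the paper: Bia\l{}ynicki--Birula decomposition combined with a weight-one (quasi-)symplectic form to force every attracting cell to have rank $\tfrac{1}{2}\dim\mathcal{M}_{n,S}^{e,\xi}$. The paper spends most of its effort on precisely the technical points you flag at the end---proving smoothness of the \emph{strictly} parabolic moduli (Yokogawa only does the quasi-parabolic case), constructing the form explicitly via Grothendieck--Serre duality on an \'etale cover (since only an \'etale-local Poincar\'e family exists), checking it is alternating even in characteristic~$2$, and verifying the weight-one equivariance by a \v{C}ech cocycle computation---so your outline is accurate but these verifications are where the actual work lies.
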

\begin{proof}
It will be convenient to use the Poincaré family for the proof. Yokogawa's GIT construction implies that the universal family exists étale locally. In fact, in the generic case, there is (see \cite[Remark 5.1]{Yokogawa}) a scheme $R^s$ equipped with a $GL_N$-action factoring through a $PGL_N$-action such that \[\mathcal{M}_{n,S}^{e,  {\xi}}  \cong R^s/PGL_N,\]
where the quotient here is the geometric quotient. Let  $[R^s/GL_N]$ be the stack quotient of $R^s$ by $GL_N$. 
Then, $[R^s/GL_N]\rightarrow \mathcal{M}_{n,S}^{e,  {\xi}}$ is a $\mathbb{G}_m$-gerbe. 
We also know that the stack quotient $[R^s/GL_N]$ is the moduli stack of $\xi$-stable parabolic Higgs bundles, and the universal family exists tautologically for $[R^s/GL_N]$. Since any $\mathbb{G}_m$-gerbe is étale locally neutral, we deduce that the universal family exists étale locally by pullback the Poincaré family along sections.

Yokogawa proves that the variety $^{q}\mathcal{M}^{e, {\xi}}_{n, S}$ is smooth (\cite[Proposition 5.2]{Yokogawa}) as long as $(e, {\xi})$ is generic. We need to prove that $\mathcal{M}_{n,S}^{e,  {\xi}}$ is smooth as well. To simplify the notation, we denote $\mathcal{M}_{n, S}^{e,  {\xi}}$ by $\mathrm{P}$. 

\begin{lemm}\label{smooth}
The variety $\mathcal{M}_{n,S}^{e,  {\xi}}$ is smooth. 
\end{lemm}
\begin{proof}[Proof of the lemma]
The tangent bundle of $\mathrm{P}$ is given by $\mathrm{R}_{k[\varepsilon]/k}\mathrm{P}$, the Weil restriction of $\mathrm{P}$ where $k[\varepsilon]=k[\epsilon]/\epsilon^2$ is the ring of dual numbers. Let $\mathcal{T}_{\mathrm{P}}$ be its tangent sheaf. Then for any open subset $U$ of $\mathrm{P}$, $$\mathcal{T}_{\mathrm{P}}(U) = \ker(\mathrm{P}(U[\varepsilon])\rightarrow \mathrm{P}(U)), $$
where $\mathrm{P}(U)$ is the set pointed by the natural inclusion $U\rightarrow \mathrm{P}$. Let $\mathfrak{V}\xrightarrow{a}\mathrm{P}$ be an étale covering so that the universal family exists over $X\times \mathfrak{V}$. We have an isomorphism \[ \mathcal{T}_{\mathfrak{V}} \cong a^{\ast}\mathcal{T}_{\mathrm{P}}.  \]
Let $(\mathcal{E}, \varphi, (i_v)_{v\in S_{\mathfrak{V}}})$ be a universal family over $\mathfrak{V}\times X$, where $i_v$ is a family of parabolic structures that is defined using coherent sheaf interpretation of flag structures (for details see \cite[Section 1]{Yokogawa}). 
  Let $\pi: \mathfrak{V}\times X\rightarrow  \mathfrak{V}$ be the projection morphism. 
The deformation arguments in \cite[2.3]{infinitesimal} show that we have an isomorphism of locally free sheaves \[\mathcal{T}_{\mathfrak{V}} \longrightarrow R^1\pi_{\ast} C^{\bullet},  \]
where \[C^\bullet:  \mathcal{E}nd^{1}(\mathcal{E})   \xrightarrow{\ad(\varphi)} \mathcal{E}nd^{0}(\mathcal{E})(D),  \] 
 is the complex concentrated in degree $0$ and $1$,  $\mathcal{E}nd^{1}(\mathcal{E})$ is the sheaf of endomorphisms of $\mathcal{E}$ that preserve the quasi-parabolic structure, $\mathcal{E}nd^{0}(\mathcal{E})$ is its subsheaf that preserve the parabolic structure, and $\mathrm{ad}(\varphi)$ is defined by $\mathrm{ad}(\varphi)(\theta)=\varphi. \theta- \theta. \varphi $.

The tangent space at a geometric point $x$ of $\mathfrak{V}$ is given by the hypercohomology of the complex concentrated in degree $0$ and $1$: 
 \[C^\bullet_x:  \mathcal{E}nd^{1}(\mathcal{E}_x)   \xrightarrow{\mathrm{ad}(\varphi_x)} \mathcal{E}nd^{0}(\mathcal{E}_x)(D).  \] 
The Cartan-Eilenberg  first spectral sequence says that 
\[E_1^{p,q}:= H^q(\closure{X}, C^p_x)\Rightarrow H^{n}(\closure{X}, C_x^\bullet).\] Since $H^i(\closure{X}, C_x^p)=0$, for $i\geq 2$ or $p\neq 0, 1$, we deduce that \[\dim H^{1}(\closure{X}, C_x^\bullet) = \dim E_2^{1,0}+\dim E_2^{0,1}. \]
We have \[E_2^{1,0}\cong \mathrm{coker}\left(H^{0}(\mathcal{E}nd^{1}(\mathcal{E}_x)  )\rightarrow 
H^0( \mathcal{E}nd^{0}(\mathcal{E}_x)(D)) \right), \]
and \[E_2^{0,1}\cong \mathrm{ker}\left(H^{1}(\mathcal{E}nd^{1}(\mathcal{E}_x)  )\rightarrow 
H^1( \mathcal{E}nd^{0}(\mathcal{E}_x)(D)) \right).   \]

The trace map identifies $\mathcal{E}nd^{0}(\mathcal{E}_x)(\sum_{v\in \closure{S}}v)$ with \[\mathcal{E}nd^{1}(\mathcal{E}_x)^\vee.  \]
In fact, this can be checked locally over $X$ and at a point $v\in \closure{S}$ it reduces to fact that the orthongonal complement of the Lie algebra $\mathfrak{n}$ under the trace pairing is $\mathfrak{b}$.  
By Serre duality we have \[E_2^{0,1}\cong (E_2^{1,0})^{\vee}.\]  
Therefore \[\begin{split} &\dim H^{1}(\closure{X}, C_x^\bullet)  \\
= &2\dim E_2^{0,1}= 2\dim  \mathrm{coker}\left(H^{0}(\mathcal{E}nd^{1}(\mathcal{E}_x)  )\rightarrow 
H^0( \mathcal{E}nd^{0}(\mathcal{E}_x)(D)) \right). \end{split} \]

Note that any $\theta \in \mathrm{ker}(H^{0}(\mathcal{E}nd^{1}(\mathcal{E}_x)  )\rightarrow 
H^0(\mathcal{E}nd^{0}(\mathcal{E}_x)(D)) )$ defines an endomorphism of the parabolic Higgs bundle $(\mathcal{E}_x,\varphi_x, (i_v)_{v\in \closure{S}})$. Since $(\mathcal{E}_x,\varphi_x, (i_v)_{v\in \closure{S}})$ is stable, $\theta$ can only be a scalar morphism. Therefore, the kernel has dimension $1$ and 
\[\dim E_2^{0,1}= 1-h^0(\mathcal{E}nd^{1}(\mathcal{E}_x))+h^0( \mathcal{E}nd^{0}(\mathcal{E}_x)(D))  . \]
Since $\mathcal{E}nd^{0}(\mathcal{E}_x)(D)\cong \mathcal{E}nd^{1}(\mathcal{E}_x)^\vee(K_X)$, we deduce by Riemann-Roch theorem that 
\[\dim E_2^{0,1} = n^2( g-1 ) - \deg \mathcal{E}nd^{1}(\mathcal{E}_x)   +1  . \]
To calculate the degree of $\mathcal{E}nd^{1}(\mathcal{E}_x)$, we can use the short exact sequence:
\[0\longrightarrow  \mathcal{E}nd^{1}(\mathcal{E}_x)\longrightarrow  \mathcal{E}nd(\mathcal{E}_x)\longrightarrow  \mathcal{E}nd(\mathcal{E}_x)/\mathcal{E}nd^{1}(\mathcal{E}_x)\longrightarrow  0.    \]
Note that $\mathcal{E}nd(\mathcal{E}_x)/\mathcal{E}nd^{1}(\mathcal{E}_x)$ is a skyscraper sheaf supported in the set of points where we have imposed a parabolic structure. By definition of $\mathcal{E}nd^{1}(\mathcal{E}_x)$, we see that $\mathcal{E}nd(\mathcal{E}_x)/\mathcal{E}nd^{1}(\mathcal{E}_x)$ has degree $\frac{n^2-n}{2}|\closure{S}|$. Since $\mathcal{E}nd(\mathcal{E}_x)\cong \mathcal{E}_x\otimes\mathcal{E}_x^\vee$ has degree $0$, we obtain that 
\[\dim H^{1}(\closure{X}, C_x^\bullet)=2n^2(g-1)+2+({n^2-n})|\closure{S}|. \]
Therefore, the tangent dimension of $\mathcal{M}_{n, S}^{e,\xi}$ is constant. As $\mathcal{M}_{n, S}^{e,\xi}$ is of finite type and reduced, it is smooth. 
\end{proof}

We come back to the proof of the theorem. We follow the general strategy given in \cite[3]{HH} or \cite[2.1]{GPHS}. 
We have a parabolic Hitchin fibration:
\[^{q}\mathcal{M}_{n, S}^{e, {\xi}}\longrightarrow \mathcal{A}_D.\]
This is $\mathbb{G}_m$-equivariant and is projective if $(e, {\xi})$ is generic  (\cite[Proposition 5.12]{Yokogawa0}). It defines the Hitchin morphism $\mathrm{P}\rightarrow \mathcal{A}_D$, which is $\mathbb{G}_m$-equivariant and the weights of the $\mathbb{G}_m$-action are strictly positive over $\mathcal{A}_D$. Hence the $\mathbb{G}_m$-action contracts the points in $\mathrm{P}$ to the fiber over $0$. By Lemma \ref{smooth}, $\mathrm{P}$ is also smooth. We may apply the Bialynicki-Birula-Hesselink decomposition theorem (see \cite[Section 4]{B-B} over an algebraically closed field and \cite[2.10]{HH} in general). 
We obtain a disjoint union of smooth connected varieties \begin{equation}\mathrm{P}^{\mathbb{G}_m}=  \bigcup F_i .  \end{equation}
This induces a disjoint union: 
\begin{equation}\label{Bunion}\mathrm{P}= \bigcup F_i^{+},  \end{equation}
where $F_i^{+}$ consists of those points such that  $\lim_{t\rightarrow 0}t.x\in F_i$ which is a locally closed subscheme of $\mathrm{P}$. 
Bialynicki-Birula's theorem further says that the morphism 
\[F_i^{+}\rightarrow F_i,  \]
is a Zariski local fibration in affine spaces.  
It now suffices to show that the fiber of the morphism has dimension $\frac{1}{2}\dim \mathrm{P}$. For this, it suffices to show that the subspace $T_x^{+}$ of the positive weights of the tangent space of $T_x$ at a point $x$ in $\mathrm{P}^{\mathbb{G}_m}$ has dimension $\frac{1}{2}\dim \mathrm{P}$.

For the statement concerning the dimension, we can pass to $\closure{\mathbb{F}}_q.$  
Then we may use \cite[Theorem, p.492]{B-B}. In fact, it suffices to show that $\mathrm{P}$ admits a quasi-symplectic form $\omega\in \Omega^2(\mathrm{P})$ which is $\mathbb{G}_m$-equivariant of weight $1$ in the sense that 
\begin{equation}\label{B4}\lambda^{*}\omega = t \otimes \omega, \end{equation}
where $\lambda: \mathbb{G}_m\times  \mathrm{P}\rightarrow  \mathrm{P}$ is the $\mathbb{G}_m$-action, and $t=Z\otimes 1$ is the regular function over $\mathbb{G}_m\times P=\Spec(k[Z, Z^{-1}])\times P$.
In fact, admitting the existence of such a quasi-symplectic form, let $x\in (\mathrm{P})^{\mathbb{G}_m}$, then $\mathbb{G}_m$ acts on the tangent space $T_{x}\mathrm{P}$. Let $X,Y\in T_{x}\mathrm{P}$ in the $\mathbb{G}_m$-eigenspace of weights $v_1, v_2$, then
\[\lambda^{*}\omega(X,Y)=\omega(\lambda. X, \lambda. Y)= \lambda^{v_1+v_2}\omega(X,Y). \]
This shows that $\omega(X,Y)= 0$ unless $v_1+v_2=1$. In particular, $T^+_x$ is Lagrangian. Hence its dimension is half that of $\mathrm{P}$. 

The required quasi-symplectic form on $\mathrm{P}$ has been constructed in \cite[6.1, 6.2, 4.5, 4.6]{infinitesimal}. However, the article \cite{infinitesimal} is written over $\mathbb{C}$, and we do not have a Poincaré family. For the reader's convenience, we outline their constructions and, more importantly, verify the condition \eqref{B4}.  

We use the notation of proof of Lemma \ref{smooth}. 
Observe that the trace map $\theta\mapsto \Tr(\theta. \cdot )$ defines an isomorphism \[\mathcal{E}nd^{0}(\mathcal{E})(\sum_{v\in {S}}v)\cong \mathcal{E}nd^{1}(\mathcal{E})^{\vee},\] 
 we then have an isomorphism of the complex $(C^\bullet)^\vee\otimes \pi^{\ast}\omega_X$ to $C^\ast[-1]$. Therefore we can apply the Grothendieck-Serre duality to deduce 
\[R^1\pi_\ast C^\bullet \times  R^{1}\pi_\ast(C^\bullet) \xrightarrow{\sim}   R^1\pi_\ast C^\bullet \times R^0\pi_\ast (C^\bullet \otimes \pi^{\ast}\omega_{X})\longrightarrow R^1\pi_{\ast}(\pi^{\ast}\omega_{X}) \xrightarrow{\sim}\mathcal{O}_{\mathfrak{V}}.  \]
This is a non-degenerate bilinear form on $R^{1}\pi_\ast(C^\bullet)$. Note that this defines a 
non-degenerate bilinear form on $\mathcal{T}_{\mathrm{P}}$, because $C^\bullet$ descends canonically to a complex on $\mathrm{P}\times X$. In fact, the universal family itself does not necessarily descend to $\mathrm{P}\times X$ as it is not unique up to a unique isomorphism. Two isomorphisms may differ by a scalar. However, such scalar isomorphisms become identity on $C^\bullet$.

Let us verify that the pairing is quasi-symplectic (even in characteristic $2$).
We can do it point-wise on $\mathfrak{V}$.  Let $x\in \mathfrak{V}(\closure{\mathbb{F}}_q)$ and consider 
 \begin{equation}\label{B5}\begin{split}
 \wedge: H^{1}(\closure{X}, C_x^\bullet)\otimes H^{1}(\closure{X}, C_x^\bullet) \xrightarrow{(\Id, \Tr)}H^{1}(\closure{X}, C_x^\bullet)\otimes H^{0}(\closure{X}, (C_x^\bullet)^\vee\otimes \omega_X) \\
 \overset{\cup}{\longrightarrow}H^{1}(\closure{X}, \omega_X) . \end{split}
   \end{equation}
By choosing a finite affine covering $\mathfrak{U}=(U_i)_i$ of $\closure{X}$,  we can identify the cohomology with \v{C}ech cohomology. Now we can do an explicit cocycle calculation. Let 
\begin{equation}\label{cechc}
(t_i, s_{ij})\in \check{C}^{1}(\mathfrak{U}, C_x^\bullet) =\check{C}^{0}(\mathfrak{U}, \mathcal{E}nd^{0}(\mathcal{E}_x)(D))\oplus \check{C}^{1}(\mathfrak{U}, \mathcal{E}nd^{1}(\mathcal{E}_x)) ,     \end{equation}
 represents a $1$-cohomological class, which implies in particular that for any indices $i,j$, we have \begin{equation}\label{B6}\varphi s_{ij}- s_{ij}\varphi= (t_i- t_j) |_{U_{ij}}.\end{equation} Then we can calculate the product in \eqref{B5}, which gives 
 \begin{equation}\label{expl}[(t_i, s_{ij})] \wedge [(t'_i, s'_{ij})] = [\Tr(t'_is_{ij})-\Tr(s'_{ij}t_j)] .  \end{equation} Therefore, the condition \eqref{B6} implies that \[[(t_i, s_{ij})] \wedge [(t_i, s_{ij})]= [\Tr((t_i-t_j)s_{ij})]=0. \]

Finally, let us verify that the quasi-symplectic form constructed above is $\mathbb{G}_m$-equivariant of weight $1$. We can still use the above \v{C}ech cocycle calculations. Note that $\mathbb{G}_m$ acts only on the Higgs fields. From the explicit presentation of  \cite[2.3]{infinitesimal}, push forward by an element $t\in \mathbb{G}_m(k)$ sends a tangent vector corresponding to the cohomological class represented by $(t_i, s_{ij})$ in \eqref{cechc} to $(tt_i, s_{ij})$. Then we read from the explicit formula \eqref{expl} that the quasi-symplectic form is $\mathbb{G}_m$-equivariant of weight $1$.
\end{proof}

\section{Finalizing the proof of the main results}\label{F5}

Recall that we have linked the number $|E_n^{irr}(\mathfrak{R})^{\Frob^{*}}|$ to a trace formula in Theorem \ref{Einfinitesimal}. To finish the proof of the main theorems, we need to link this trace formula to the number of $\mathbb{F}_q$-points of the moduli space of Hitchin bundles. 


\subsection{Geometric interpretation of an infinitesimal trace formula}
The main goal of this section is to link the number of certain Hitchin pairs or parabolic Higgs bundles to an infinitesimal trace formula. 

We must first introduce $\xi$-semi-stability for elements in $G(\AAA)\times \ggg(F)$. It is an adèlic version that generalizes $\xi$-semi-stability for parabolic Higgs bundles. Without $\ggg(F)$-part, it has been studied in \cite[Section 6.1]{Yu2}. 

For any parabolic subgroup $R$ of G defined over $F$, there is a unique pair $(P, \delta)$ with $P$ a standard parabolic subgroup, and $\delta\in P(F)\backslash G(F)$ such that $R=\delta^{-1}P\delta$. We will
take advantage of this to denote a parabolic subgroup defined over $F$ by a pair $(P,  \delta)$
if there is no confusion.
\begin{definition}
Let $(x, \gamma)\in G(\AAA)\times \ggg(F)$. We say that a parabolic subgroup $(P, \delta)$ defined over $F$ is a Hitchin subgroup with respect to $(x, \gamma)$ if \[\Ad(\delta)(\gamma) \in \ppp(F). \]
We say that a Hitchin subgroup $(Q, \eta)$ is $\xi$-semistable for $(x, \gamma)$ if for any Hitchin subgroup $(P, \delta)$ contained in $(Q, \eta)$ we have
 \[ \langle \varpi, H_B(\delta x)+\sum_{v\in S} s_{\delta x, v} \xi_v \rangle\leq 0,\quad \forall \varpi\in \hat{\Delta}_P^Q.  \]
 
 We that $(x, \gamma)$ is $\xi$-semistable if $G$ is $\xi$-semistable for $(x, \gamma)$.
\end{definition}

\begin{definition}\label{admissible}
We say that $\xi=(\xi_v)_{v\in S}\in (\ago_{B})^{S}$ is admissible if
\[ 0\leq \langle\alpha, \xi_v\rangle \leq [\kappa_v: \mathbb{F}_q],\]
for any positive root $\alpha$. 
\end{definition}

\begin{theorem}[Harder-Narasimhan filtration]\label{Harder-Nara}
Suppose that $\xi$ is admissible. Let $(x, \gamma)\in G(\AAA)\times \ggg(F)$.  
Let $(Q,\eta)$ be a Hitchin subgroup with respect to $(x, \gamma)$. 
There is a unique Hitchin subgroup $(P,\delta)$ contained in $(Q,\eta)$ such that 
$(P,\delta)$ is semistable for $(x, \gamma)$ and \[\tau_P^Q(H_B(\delta x)+\sum_{v\in S}s_{\delta x, v}\xi_v)=1.\]  
Such a pair is called the canonical refinement of $(Q,\eta)$ for $(x, \gamma)$. 
\end{theorem}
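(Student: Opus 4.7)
The plan is to adapt the classical Stuhler--Harder--Narasimhan filtration for $G$-bundles to the pair $(x,\gamma)\in G(\AAA)\times\ggg(F)$, cutting the poset of candidate parabolic subgroups down to those that are Hitchin with respect to $(x,\gamma)$. The analogous filtration theorem without $\gamma$ has already been established for the parameter $\xi$ in \cite[Section 6]{Yu2}, and the combinatorial arguments there transpose essentially without change: the Hitchin condition $\Ad(\delta)(\gamma) \in \ppp(F)$ merely prunes the lattice of candidates while preserving its combinatorial structure. Conjugating by a representative of $\eta$ reduces the problem to the case $(Q,\eta)=(G,1)$, after which we must produce a unique canonical Hitchin subgroup of $G$.

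For existence, introduce the non-empty set $\mathcal{S}$ of Hitchin subgroups $(P,\delta)$ such that $\tau_P^G(H_B(\delta x)+\sum_{v\in S}s_{\delta x,v}\xi_v)=1$; the pair $(G,1)$ belongs to $\mathcal{S}$ since $\Delta_G^G=\emptyset$. Choose $(P,\delta)\in\mathcal{S}$ that is minimal with respect to inclusion, equivalently with $\dim \ago_P$ maximal, and claim this element is semistable for $(x,\gamma)$. If it were not, some Hitchin subgroup $(P',\delta')\subsetneq (P,\delta)$ would yield $\langle\varpi, H_B(\delta' x)+\sum_v s_{\delta' x,v}\xi_v\rangle>0$ for some $\varpi\in\hat{\Delta}_{P'}^P$. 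Combining this destabilization with the condition $\tau_P^G=1$ through an Arthur-type identity on $\tau$ and $\htau$ in the spirit of \cite[Lemma 5.2]{Ageom} would then produce an element of $\mathcal{S}$ strictly below $(P,\delta)$, contradicting minimality.

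For uniqueness, given two canonical refinements $(P_1,\delta_1)$ and $(P_2,\delta_2)$, construct a Hitchin subgroup $(P_0,\delta_0)$ contained in both. The semistability of $(P_1,\delta_1)$ applied to $(P_0,\delta_0)$ imposes a $\htau$-type non-positivity, while the strict condition $\tau_{P_2}^G=1$ at $(P_2,\delta_2)$ forces $\tau$-type strict positivities on the same slope vector. An Arthur-style combinatorial comparison between these $\tau$- and $\htau$-inequalities shows they are incompatible unless $P_1=P_2$; the chamber structure of $\ago_{P_1}$ then pins $\delta_1$ and $\delta_2$ to the same class in $P_1(F)\backslash G(F)$.

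The main obstacle is the finiteness of $\mathcal{S}$, namely showing that within each standard conjugacy class of parabolic subgroups, only finitely many $\delta\in P(F)\backslash G(F)$ can satisfy the strict positivity $\tau_P^G(H_B(\delta x)+\sum_v s_{\delta x,v}\xi_v)=1$. Admissibility of $\xi$ is decisive here: it ensures that $\sum_v s_{\delta x,v}\xi_v$ ranges over a bounded subset of $\ago_B$ as the Weyl elements $s_{\delta x, v}$ vary in the finite group $W$, so that the strict positivity condition effectively confines $H_B(\delta x)$ to a translate of the positive Weyl chamber modulo $\ago_G$. Classical adelic reduction theory over function fields, as in Stuhler's original treatment and reviewed in \cite[Section 6]{Yu2}, then gives the required finiteness, with the Hitchin condition on $\delta$ only restricting the set further.
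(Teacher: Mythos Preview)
Your opening paragraph is exactly the paper's proof: both point to \cite[Theorem~6.2]{Yu2} and observe that restricting to Hitchin subgroups leaves the argument intact, since the Hitchin condition $\Ad(\delta)(\gamma)\in\ppp(F)$ is preserved under passage to sub-parabolics.

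Your subsequent sketch of the argument from \cite{Yu2} has a genuine gap in the uniqueness step. You propose to ``construct a Hitchin subgroup $(P_0,\delta_0)$ contained in both'' canonical refinements, but two parabolic subgroups of $G$ over $F$ need not share any common parabolic subgroup: already in $GL_2$, the upper- and lower-triangular Borels meet only in the diagonal torus. The standard uniqueness argument (as in Stuhler's or Behrend's treatments, which \cite{Yu2} adapts) does not manufacture a common sub-parabolic. For $GL_n$ the cleanest route is to show that the first step of the filtration---the maximal destabilizing Hitchin subobject---is unique: if $V_1,V_2\subset F^n$ are $\gamma$-stable subspaces realizing the maximal $\xi$-slope, then so is the $\gamma$-stable subspace $V_1+V_2$, forcing $V_1=V_2$ by maximality of rank; one then inducts on the quotient. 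Your $\tau$-versus-$\htau$ comparison is the right flavor for the combinatorics, but it must be applied to the relative position of the two parabolics (via Bruhat decomposition) rather than to a nonexistent common refinement.
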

\begin{proof}
The proof is the same as \cite[Theorem 6.2]{Yu2}. It is sufficient to replace parabolic subgroups by Hitchin subgroups everywhere in the proof.   
\end{proof}

We have fixed a divisor $D=K_X+\sum_{v\in S}v$. 
Let \[ \mathfrak{c}:=\ttt/W\cong \ggg/G\cong \Spec(\mathbb{F}_q[\ggg]^G).\] 
 We define $\mathfrak{c}_D$ to be the affine bundle 
 \[ \mathfrak{c}_D=\mathfrak{c}\times^{\mathbb{G}_m}\mathcal{O}_X(D).\] 
Since the weights of $\mathbb{G}_m$ acting on $\mathfrak{c}$ are $1, 2, \ldots, n$, we have
\begin{equation}\label{decom}\mathfrak{c}_D\cong \mathcal{O}_X(D)\oplus \mathcal{O}_X(2D) \oplus\cdots\oplus\mathcal{O}_X(nD).   \end{equation}
This makes $\mathfrak{c}_D$ a vector bundle. 
Let $\mathfrak{c}_{D,v}$ be the fiber of the vector bundle $\mathfrak{c}_D$ at the closed point $v$ of $X$. It is a $\kappa_v$-vector space. We define: 
\[ \mathcal{R}_D:=\prod_{v\in S} R_{\kappa_v/\mathbb{F}_q}\mathfrak{c}_{D,v},\]
where $R_{\kappa_v/\mathbb{F}_q}$ is the functor of restriction of scalars from $\kappa_v$ to $\mathbb{F}_q$. Let \[ \mathcal{A}_D:=H^0(X, \mathfrak{c}_D), \]
be the Hitchin base. It is an affine space over $\mathbb{F}_q$. We have an evaluation morphism:
\[\mathrm{ev}:  \mathcal{A}_D\longrightarrow \mathcal{R}_D,  \]
that sends a global section $s$ to $(s_v)_{v\in S}$, where $s_v$ is the image of $s$ in $\car_{D,v}$.  
By the residue theorem (\cite[Corollary p.155,  Theorem 4 p.157]{Tate}), the image of the map is contained in the linear subspace $\mathcal{R}^1_D$ of $\mathcal{R}_D$ consisting of elements $(s_v)_{v\in S}\in \prod_{v\in S} \mathfrak{c}_{D, v}(\mathbb{F}_q)$ such that 
\[\sum_{v\in S}\Tr_{\kappa_v/\mathbb{F}_q}(\Tr(s_v))=0. \]
Here the inner trace is the map defined by the projection $\mathfrak{c}_D\rightarrow \mathcal{O}_X(D)$. 
Composing with the Hitchin fibration $\mathfrak{M}^{e}_{n}(D)\rightarrow \mathcal{A}_D$, we obtain a morphism that we call the residue morphism:
\[\res:  \mathfrak{M}^{e}_{n}(D)\longrightarrow \mathcal{R}_D^1.\]

\begin{remark}\label{RMKS3}\normalfont
By the Riemann-Roch theorem, if $\deg S\geq 3-2g$, we know that the linear map \[ H^{0}(X, \mathcal{O}_X(iD))\longrightarrow \prod_{v\in S} R_{\kappa_v/\mathbb{F}_q}\mathcal{O}_X(iD)_{v}, \]
is surjective if $i>1$, and has codimension $1$ if $i=1$. Therefore the map $\mathcal{A}_D\rightarrow \mathcal{R}_D^1$ has codimension $1$ when $\deg S\geq 3-2g$. 
\end{remark}

For each $v\in S$, fix \[ o_v\in R_{\kappa_v/\mathbb{F}_q}\car_{D,v}(\mathbb{F}_q)\cong \car(\kappa_v).\] Let $\closure{\Omega}(o_v)\subseteq \ggg(\kappa_v)$ be the set of elements whose image in $\car(\kappa_v)$ equals $o_v$. Let \[\Omega(o_v)\subset\ggg(\mathcal{O}_v),\] be the set of elements in $\ggg(\mathcal{O}_v)$ whose image in $\ggg(\kappa_v)$ belongs to $\closure{\Omega}(o_v)$.

\begin{theorem}\label{B3}
Suppose that \[ D=\sum_{v\in |X|}n_v v.\]  (1) Let $o=(o_v)_{v\in S}\in \mathcal{R}^1_D(\mathbb{F}_q)$. For each $v$, let $h_v$ be the characterisitic function of the set $\wp_v^{-n_v}\Omega(o_v)$: 
\[ h_v=\mathbbm{1}_{\wp_v^{-n_v}\Omega(o_v)}.\] 
Let $h\in {C}_c^\infty(\ggg(\AAA))$ be the function defined by $$ h=\otimes_v h_v. $$
For any $e$ that is coprime to $n$, we have
\begin{equation}\label{Jgresidue}
J^{\ggg, e}(h) =   \frac{1}{q-1}  |\mathfrak{M}_n^{e}(o)(\mathbb{F}_q)| \end{equation}

(2) For each closed point $v$ outside $S$, let \[f_v  = \mathbbm{1}_{\wp^{-n_v}_v\ggg(\ooo_v)},   \]  and for each point $v\in S$, let
  \[f_v = \vol(\mathcal{I}_v)^{-1}\mathbbm{1}_{\wp^{-n_v}_v\mathfrak{I}_{v+}},  \]
  where  $\mathfrak{I}_{v+}$ is the pro-nilpotent radical of the standard Iwahori subalgebra $\mathfrak{I}_v$ of $\ggg(\mathcal{O}_v)$.  
 Let $f\in {C}_c^\infty(\ggg(\AAA))$  be the function defined by
 \[ f=\otimes f_v . \] 
Let $(e,\xi)\in \mathbb{Z}\times(\ago_B)^S$. 
Suppose that ${\xi}$ is admissible and $(e,\xi)$ is generic. Then we have
\begin{equation}\label{JgMnS}
J^{\ggg, e, {\xi}}( f) = \frac{1}{q-1}   |{\mathcal{M}}_{n, S}^{ e, {\xi}}(\mathbb{F}_q)|. \end{equation}
\end{theorem}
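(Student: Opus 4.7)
The plan is to convert both infinitesimal trace-formula quantities into orbit sums over semistable pairs $(x,\gamma)\in G(\AAA)^{e}\times \ggg(F)$, and then invoke Weil's ad\'elic dictionary between such pairs and (parabolic) Higgs bundles. The starting point is the pointwise identity
\[
j^{\ggg,\xi}_{f}(x)=\sum_{\gamma\in \ggg(F)}f(\Ad(x^{-1})\gamma)\cdot \mathbbm{1}\bigl[(x,\gamma)\text{ is $\xi$-semistable}\bigr],
\]
which I would derive by reindexing the defining alternating sum over standard parabolics $(P,\delta)$ by \emph{Hitchin} subgroups $(P,\delta)$ for $(x,\gamma)$ (using that $f$ is supported where $\Ad(x^{-1})\gamma\in\prod_v\wp_v^{-n_v}\ggg(\mathcal O_v)$ at least outside $S$, so $\Ad(\delta)\gamma$ must land in $\ppp(F)$ whenever the corresponding term is non-zero) and applying the Langlands combinatorial inversion of $\htau_P$ via the canonical refinement furnished by Theorem~\ref{Harder-Nara}. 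This is the Lie-algebra analogue of what is done in the group case; genericity of $(e,\xi)$ ensures the canonical refinement exists and is proper unless $(x,\gamma)$ is already $\xi$-semistable.

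Next I would unfold the integral. By Weil's dictionary, the double quotient
$G(F)\backslash G(\AAA)^{e}/G(\mathcal O)$ classifies rank~$n$, degree~$e$ vector bundles on $X$, and the refined quotient with $G(\mathcal O_v)$ replaced by $\mathcal I_v$ at each $v\in S$ classifies such bundles equipped with a full flag in the fiber $\mathcal E_v$. Given $(x,\gamma)$, the local sections $\Ad(x_v^{-1})\gamma$ glue into a global section $\varphi$ of $\ad(\mathcal E)\otimes\mathcal O_X(D)$ precisely because $\gamma\in \ggg(F)$ and $D=\sum n_v v$. For part~(1), $h_v=\mathbbm{1}_{\wp_v^{-n_v}\Omega(o_v)}$ forces $\varphi$ to be holomorphic with pole bounded by $D$ and, at each $v\in S$, to have fiber whose characteristic polynomial is $o_v$; so the underlying $D$-Hitchin bundle lies in $\res^{-1}(o)$. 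For part~(2), the Iwahori support of $x$ at $v\in S$ produces the flag $i_v$, and $f_v=\vol(\mathcal I_v)^{-1}\mathbbm{1}_{\wp_v^{-n_v}\mathfrak I_{v+}}$ forces $\varphi_v$ to preserve and strictly shift this flag, i.e.~$(\mathcal E,\varphi,(i_v))$ is a parabolic Higgs bundle in the sense of Section~\ref{parabolic}.

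The remaining identification is that of ad\'elic $\xi$-semistability with Definition~\ref{421}. Given a pair $(x,\gamma)$ and a Hitchin subgroup $(P,\delta)$, the associated parabolic reduction $\mathcal E_P$ of $\mathcal E$ satisfies $H_P(\mathcal E_P)=[H_B(\delta x)]_P$, and the Weyl element $s_{\delta x,v}$ recovers the canonical map $s_{\mathcal E_P,v}\colon \ago_B\to \ago_P$ used in Definition~\ref{421}; so the ad\'elic inequality $\langle\varpi,H_B(\delta x)+\sum_{v\in S}s_{\delta x,v}\xi_v\rangle\leq 0$ becomes exactly the parabolic semistability inequality. By the genericity hypothesis (or by $(e,n)=1$ in part~(1), which is the $\xi=0$ case of Definition~\ref{adgen} as noted in Remark~\ref{REMgen}) semistability coincides with stability, so the automorphism group of every such object is $\mathbb F_q^{\times}$. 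With the Haar normalization $\vol(G(\mathcal O))=\vol(\mathcal I_v)/\vol(\mathcal I_v)=1$, each orbit contributes $|\mathrm{Aut}|^{-1}=(q-1)^{-1}$ to the integral, and summing gives \eqref{Jgresidue} and \eqref{JgMnS} respectively.

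The hardest step will be Step~2 in the parabolic case: one must verify in detail that the ad\'elic data $(x_v\in G(\mathcal O_v)\mathcal I_v,\ \gamma\in\wp_v^{-n_v}\mathfrak I_{v+})$ encode \emph{exactly} a flag $i_v$ in $\mathcal E_v$ together with the \emph{strict} shift condition $\varphi_v(L_i)\subseteq L_{i-1}$, and that the volume factor $\vol(\mathcal I_v)^{-1}$ is absorbed by passing from $G(\mathcal O_v)$-orbits to $\mathcal I_v$-orbits (the set of flags at $v$ has $[G(\mathcal O_v):\mathcal I_v]$ elements). This rests on the identification $\mathfrak I_{v+}/\wp_v\mathfrak I_{v+}\cong \nnn(\kappa_v)$ and on descending the action of $\mathcal I_v$ on pairs to the combinatorial datum of a flag, both of which are standard but need careful bookkeeping to align with the conventions of Section~\ref{parabolic}.
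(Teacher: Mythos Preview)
Your proposal follows essentially the same route as the paper: after the change of variables $\gamma\mapsto\Ad(\delta^{-1})\gamma$ the inner sum in $j^{\ggg,\xi}(x)$ runs over Hitchin subgroups for a fixed $\gamma\in\ggg(F)$, and regrouping by canonical refinements together with the Labesse--Waldspurger identity $\sum_{P\supseteq Q}(-1)^{\dim\ago_P^G}\htau_P\tau_Q^P=\delta_{Q,G}$ collapses everything to the indicator of $\xi$-semistability; the remainder is exactly the Weil dictionary and the observation that automorphisms are scalars under the genericity/coprimality hypothesis. Two small corrections: the existence and uniqueness of canonical refinements (Theorem~\ref{Harder-Nara}) requires \emph{admissibility} of $\xi$, not genericity; and the constraint $\Ad(\delta)\gamma\in\ppp(F)$ is not deduced from the support of $f$ but is simply what the original sum $\sum_{\gamma\in\ppp(F)}$ becomes after the substitution.
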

\begin{proof}
Let us prove \eqref{JgMnS}. The proof of \eqref{Jgresidue} is simpler but uses similar arguments. 
We will provide indications of the modifications required at the end of the proof. The proof is based on the inspiration from \cite[(3.10.2), 3.11]{Chau}, which does not consider any parabolic structure or residue.

We must go into details of Weil's dictionary between adèles and vector bundles, or $G$-torsors over a curve. We follow \cite[Section 1]{NgoH}. 
Suppose that $\mathcal{E}$ is a right $G$-torsor over $X$. Choose a $G$-equivariant trivialization $\iota: \mathcal{E}_{\eta}\rightarrow G_{\eta}$  over the generic point $\eta=\Spec(F)$ of $X$, and $\iota_v: \mathcal{E}_{\ooo_v}\rightarrow G_{\ooo_v}$, where $\mathcal{E}_{\ooo_v}$ is the pullback of $\mathcal{E}$ to $\Spec({\ooo_v})$. 
Using the morphism $\Spec(F_v)\rightarrow \Spec(F)$, we define a right $G$-equivariant isomorphism:  \[G_{F_v}\xrightarrow{  \iota_v^{-1}|_{\Spec(F_v)}  }\mathcal{E}_{F_v}\xrightarrow{\iota|_{\Spec(F_v)}}G_{F_v}.  \]
Such a right $G$-equivariant isomorphism $G_{F_v}\rightarrow G_{F_v}$ is given by left multiplication by an element $x_v\in G(F_v)$. 
The generic trivialization can be extended to a Zariski open subset $U$ of $X$. Hence for each $v\in U$, $x_v\in G(\ooo_v)$. As  $X-U$ is finite, $x=(x_v)_{v\in |X|}$ lies in $G(\AAA)$.

Changing the trivialization $\iota_v$ amounts to a right multiplication on $x_v$ by an element in $G(\ooo_v)$. 
Let $v \in S$. If a flag structure, i.e., a reduction of the structure group $\mathcal{E}_{v, B}$ of $\mathcal{E}_v$ to $B$ is given, we can choose the trivialization $\iota_v$ to be compatible with the flag structure. 
Namely, we require that there is a trivialization $b_{v}$ of $\mathcal{E}_{v, B}$ such that the following diagram is commutative: 
\[\begin{CD}
\mathcal{E}_{v,B}@>b_{v}>>B \\
@VVV @VVV \\
\mathcal{E}_{v}@>\iota_{v}|_{\Spec(\kappa_v)}>>G
\end{CD}.\]
Two such trivializations of $\mathcal{E}_{\ooo_v}$ differ by an element in $\mathcal{I}_{v}$, the standard Iwahori subgroup. 
Forgetting the trivialization $\iota_v$, a $G$-torsor $\mathcal{E}$ equipped with a flag structure $i_{v}$ for each $v\in S$ and a generic trivialization $\iota$ is associated to an element in $G(\AAA)/G(\ooo^{S})\mathcal{I}_{S}$, where \[\mathcal{I_S}=\prod_{v\in S}\mathcal{I}_v\] and \[G(\ooo^{S})=\prod_{v \notin S}G(\ooo_v).\] 
Forgetting further the generic trivialization, we obtain an element in \[x\in G(F)\backslash G(\AAA)/G(\ooo^{S})\mathcal{I}_{S}. \]

An element $\gamma\in \ggg(F)$  defines a section of $\ad(\mathcal{E})_\eta$ at the generic point $\eta$. It extends to a rational section of $\ad(\mathcal{E})$ over $X$: $X\dasharrow \ad(\mathcal{E})$. The element $\gamma$ satisfies 
$$ \ad(x^{-1})(\gamma) \in \prod_{v}\wp_v^{-n_v}\ggg(\ooo_v),  $$ 
if and only if the associated rational section  extends to a genuine section \[X\longrightarrow \Ad(\mathcal{E})\otimes \mathcal{O}_X(D).\]
Moreover, the element $\gamma$ satisfies the condition 
\[\ad(x^{-1})(\gamma) \in \wp_{v}^{-n_v}\mathfrak{I}_{v+},  \] 
at the place $v\in S$, if and only if the section is compatible (nilpotent) with the flag structure. 
Conversely, a section $\varphi$ of  $\ad(\mathcal{E})\otimes \mathcal{O}_X(D)$ defines an element $\gamma\in \ggg(F)$ if a generic trivialization of $\mathcal{E}$ has been chosen.

Suppose $(\mathcal{E}, (i_v)_{v\in S}, \iota, (\iota_v)_{v\in |X|})$ is a tuple consisting of a $G$-torsor $\mathcal{E}$, a flag structure $i_v$ for each $v\in S$, a generic trivialization $\iota$ and trivializations $\iota_v$ over $\Spec(\ooo_v)$. We suppose the flag structure $i_{v}$ and the trivialization $\iota_v$ are compatible for every $v\in S$. Let $P$ be a standard parabolic subgroup of $G$ and $\mathcal{E}_P$ a reduction of the structure group of $\mathcal{E}$ to $P$. 
Choose a generic trivialization $\iota_P$ of $\mathcal{E}_P$ and a trivialization $\iota_{P,v}$ of $\mathcal{E}_{P,v}$ for each place $v$. 
The pair $(\mathcal{E}_P, \iota_P)$ gives an element $p\in P(\AAA)/P(\ooo)$. 
 The trivialization $\iota_{P}$ defines a trivialization of $\mathcal{E}$, still denoted by $\iota_P$. 
 With the trivialization $\iota_P$, this parabolic reduction of $\mathcal{E}$ gives a Hitchin reduction if the associated element $\gamma\in \ggg(F)$ belongs to $\ppp(F)$. 
Two generic trivializations are differed by an element in $G(F)$, so there is a $\delta\in G(F)$ such that $\delta \circ \iota=\iota_P$. Similarly, for each place $v$ there is a $k_v\in G(\ooo_v)$ such that 
 $k_v \circ \iota_v=\iota_{P, v}$. We have $$ \delta x k^{-1}= p,    $$
where $k=(k_v)_{v\in |X|}\in G(\ooo)$.  
Let $s_{k,v}$ be a Weyl element such that $P(F_v)s_{k,v}{\mathcal{I}_{v}}= P(F_v) k_{v}{\mathcal{I}_{v}}$, we see that for any $\varpi\in  \hat{\Delta}_P$: 
\[\langle \varpi,  H_P(p) +\sum_{v\in S}s_{k,v}\xi_v  \rangle  =\langle\varpi, H_P(\mathcal{E}_P)+   \sum_{v\in S}s_{\mathcal{E}_P, v} \xi_v \rangle,  \] 
where the right-hand side is defined in the paragraphs above Definition \ref{421}. Therefore, $(\mathcal{E}, \varphi)$ is $\xi$-semistable if and only if the associated $(x, \gamma)$ is $\xi$-semistable.

We have set up a dictionary between ad\`eles and parabolic Higgs bundles. Let us translate the geometric side of the Lie algebra trace formula. The truncated kernel \( j^{\ggg, \xi}(x)\) is given by the expression:  
\[ \sum_{P\in \mathcal{P}_0}(-1)^{\dim \ago_P^G} \sum_{\delta\in P(F)\backslash G(F)} \htau_{P}(H_B(\delta x)  + \sum_{v\in S}s_{\delta x, v} \xi_v) \sum_{\gamma\in \ppp(F)} f(\Ad((\delta x)^{-1}) (\gamma) ). \]
Changing $\gamma$ to $\Ad(\delta^{-1})\gamma$, we have the equality: 
\[\sum_{\gamma\in \ppp(F)} f(\Ad((\delta x)^{-1}) (\gamma) ) =  \sum_{\substack{\gamma\in \ggg(F)\\  \Ad(\delta)(\gamma)\in \ppp(F)}} f(\Ad( x^{-1}) \gamma ). \]
We can rearrange the sums in the expression of \( j^{\ggg, \xi}(x)\) so that it equals:  
\begin{equation}\label{second} \sum_{\substack{\gamma \in \ggg(F)\\  f(\Ad( x^{-1}) \gamma )=1 }} \sum_{\substack{ (P, \delta) \\  \Ad(\delta)(\gamma)\in \ppp(F)  }}  (-1)^{\dim\ago_P^G}\htau_{P}(H_B(\delta x)  + \sum_{v\in S}s_{\delta x, v} \xi_v).       \end{equation}
The inner sum in \eqref{second} can be further regrouped following canonical refinements whose existence and uniqueness are introduced in Theorem \ref{Harder-Nara}. The inner sum equals: 
\begin{equation}\label{525} \sum_{(Q, \eta) }  \sum_{\substack{ (P, \delta)   \\ (P,\delta)^{cr}=(Q,\eta)  }     }    (-1)^{\dim\ago_P^G}\htau_{P}(H_B(\delta x)  + \sum_{v\in S}s_{\delta x, v} \xi_v) ,     \end{equation}
where the first sum is taken over Hitchin subgroups with respect to $(x, \gamma)$ that are $\xi$-semistable, and the second sum is taken over Hitchin subgroups whose canonical refinement is $(Q, \eta)$. Note that since $(Q,\eta)$ is already $\xi$-semistable, $(P, \delta)$ has $(Q, \eta)$ as canonical refinement if and only if \[\tau_Q^P(H_B(\delta x)  + \sum_{v\in S}s_{\delta x, v} \xi_v)=1.\]
Therefore, the inner sum in \eqref{525} equals
\[  \sum_{ \{ P\mid   P\supseteq Q\}}  (-1)^{\dim \ago_P^G} \htau_P(H_B(\delta x)  + \sum_{v\in S}s_{\delta x, v} \xi_v)\tau_Q^P(H_B(\delta x)  + \sum_{v\in S}s_{\delta x, v} \xi_v).    \]
The expression is zero except if $Q=G$ following \cite[Proposition 1.7.2]{LabWal}. We conclude that \eqref{525} is $1$ if $(x, \gamma)$ is $\xi$-semistable and is $0$ otherwise. 
We deduce that 
\begin{equation}\label{jsemistable}
j^{\ggg, \xi}(x)=\sum_{\{\gamma\in \ggg(F)|\text{$(x, \gamma)$ is $\xi$-semistable}\}} f(\Ad(x^{-1})\gamma). 
 \end{equation}

\sloppy
Note that for any function $j$ on $G(F)\backslash G(\AAA)$ that has compact support and is right-invariant by $G(\ooo^{S})\mathcal{I}_{S}$, we have
\[\int_{G(F)\backslash G(\AAA)^e} j(x) \d x  = \sum_{x}\frac{\vol(  \mathcal{I}_S)}{| x (G(\ooo^{S})\mathcal{I}_{S})x^{-1}\cap G(F) |  } j(x)  ,   \]
where the sum over $x$ is taken over a set of representatives for the double quotient $G(F)\backslash G(\AAA)^e/G(\ooo^{S})\mathcal{I}_{S}$ and the coefficient in front of $j(x)$ in the formula above is the volume of $G(F)\backslash G(F)xG(\ooo^{S})\mathcal{I}_{S}$. Therefore, in terms of $G$-torsors, the above integral can be written as 
\begin{equation}\label{B2e}\int_{G(F)\backslash G(\AAA)^e} j(x) \d x  = \vol(  \mathcal{I}_S) \sum_{(\mathcal{E}, (i_v)_{v\in S})}\frac{1}{|\Aut( (\mathcal{E}, (i_v)_{v\in S})) | } j((\mathcal{E}, (i_v)_{v\in S}))  ,   \end{equation}
where the sum is taken over the set of isomorphism classes of vector bundles of rank $n$, degree $e$ with flag structures in $S$, and we view $j$ as a function over this set.

Now, we apply the arguments to the trace formula. Given any \[x\in G(F)\backslash G(\AAA)/G(\ooo^{S})\mathcal{I}_{S} \] corresponding to $(\mathcal{E}, (i_v)_{v\in S})$, 
the set of pairs $(P, \delta)$ with $\delta\in P(F)\backslash G(F)$ is in bijection with the set of of parabolic reductions $\mathcal{E}_P$ of $\mathcal{E}$. 
Let $H^{0}(\Ad(\mathcal{E})_D, S)^{ {\xi}}$ be the subset of sections $\varphi$ in $H^{0}(X, \Ad(\mathcal{E})_D)$ that are compatible with the flag structures and so that $(\mathcal{E}, \varphi,  (i_v)_{v\in S})$ is $ {\xi}$-semistable.  Here $\Ad(\mathcal{E})_D=\Ad(\mathcal{E})\otimes\mathcal{O}_X(D)$. 
 The outcome of equation \eqref{jsemistable} is interpreted according to the Weil dictionary as follows:
\[ j^{\ggg, \xi}(x)= \vol(\mathcal{I}_S)^{-1}  |H^{0}(\Ad(\mathcal{E})_D, S)^{ {\xi}}|. \]
Then, we apply \eqref{B2e},
\begin{equation}\label{B4e}J^{\ggg, e, {\xi}}(f) =  \sum_{(\mathcal{E}, (i_v)_{v\in S})}\frac{|H^{0}(\Ad(\mathcal{E})_D, S)^{ {\xi}}| }{|\Aut( (\mathcal{E}, (i_v)_{v\in S})) | }.\end{equation}
Consider the action of \(\Aut( (\mathcal{E}, (i_v)_{v\in S}) )  \) on \(H^{0}(\Ad(\mathcal{E})_D, S)^{ {\xi}}\). We have 
\[\frac{|H^{0}(\Ad(\mathcal{E})_D, S)^{ {\xi}}|}{|\Aut( (\mathcal{E}, (i_v)_{v\in S}) )|}= \sum_{\varphi}\frac{1}{|\Aut((\mathcal{E}, \varphi , (i_v)_{v\in S}))|} ,   \]
where the sum over $\varphi$ is taken over a set of representatives of orbits of this action. Equivalently, it is the sum over isomorphism classes of parabolic Higgs bundles that have $(\mathcal{E}, (i_v)_{v\in S})$ as the underlying parabolic $G$-torsor.
Since $(e,\xi)$ is generic, $\xi$-semistability coincides with geometrical $\xi$-stability,  we know that every automorphism $(\mathcal{E}, \varphi , (i_v)_{v\in S})$ is scalar multiplication. 
We have 
\[  |\Aut((\mathcal{E}, \varphi , (i_v)_{v\in S}))|=q-1.  \]
Therefore, \eqref{B4e} becomes: \[ J^{\ggg, e, {\xi}}(f) = \frac{1}{q-1} |\mathcal{M}_{n, S}^{e, \xi} (\mathbb{F}_q)|.    \]

Now we explain the proof of  \eqref{Jgresidue}. We set $\xi=0$ and forget the parabolic structure. In the proof, instead of considering Higgs fields compatible with parabolic structures, we consider Higgs fields with the desired residue. Consider a pair $(\mathcal{E}, \iota)$ consisting of a $G$-torsor $\mathcal{E}$ and a generic trivialization $\iota$, which is associated to an element $x\in G(\AAA)/G(\mathcal{O})$.
We have a morphism:
\[H^{0}(X, \Ad(\mathcal{E})_D)\longrightarrow  \prod_{v\in S}\car_{D, v}(\kappa_v).\]
Let  \( o=(o_v)_{v\in S}\) be a point in \(\prod_{v\in S}\car_{D, v}(\kappa_v)\). A section in $ H^{0}(X, \Ad(\mathcal{E})_D)$ gives an element $\gamma \in \ggg(F)$ satisfying
$$ \ad(x^{-1})(\gamma) \in \prod_{v}\wp_v^{-n_v}\ggg(\ooo_v) .  $$ 
Its image in $\car_{D,v}(\kappa_v)\cong (\ggg\sslash G)(\kappa_v)$ is the characteristic polynomial of the reduction mod-$\wp_v^{-n_v+1}$ of the element $\ad(x^{-1}_v)(\gamma)$. 
Let $H^{0}(X, \Ad(\mathcal{E})_D)_o^{\xi}$ denote the subset of sections $\varphi$ whose image is $o$ and such that $(\mathcal{E}, \varphi)$ is $\xi$-semistable. The proof for \eqref{JgMnS} works for \eqref{Jgresidue} if we replace $H^{0}(\Ad(\mathcal{E}_P)_D, S)^{\xi}$ by $H^{0}(X, \Ad(\mathcal{E})_D)_o^{\xi}$. 
\end{proof}

\subsection{Comparison of the trace formulas}\label{Compar}
Next, we prove the Theorem \ref{THEO} and Theorem \ref{AL}. Then  Corollary \ref{1.4} follows from Theorem \ref{PGm}. 

First, we observe that it suffices to prove the result for the case $k=1$. This is because if we replace $X$ by $X\otimes\mathbb{F}_{q^k}$, the Frobenius endomorphism of $\closure{X}$ defined using $X\otimes\mathbb{F}_{q^k}$ is simply $\Frob^k$. Moreover, we have 
\[\mathcal{R}_D\otimes\closure{ \mathbb{F}}_{q}\cong \prod_{x\in \closure{S}}\car_{D,x}, \]
and a point $o\in \mathcal{R}_D^1(\mathbb{F}_q)$ defines a point $o=(o_x)_{x\in \overline{S}}$ in \[ \mathcal{R}_D(\closure{ \mathbb{F}}_{q})\cong \prod_{x\in \closure{S}}\car_{D, x}(\closure{\mathbb{F}}_q), \]
such that the sum of eigenvalues of $o_x$ over all $x\in \closure{S}$ is zero. If the point $o$ is similar to $\mathfrak{R}$ as defined in the Introduction, then the action of $\Frob^\ast$ on $\mathfrak{R}$ is equivalent to the action of the Frobenius action on the family of roots of $o$. In particular, we see that the point $o$, when viewed as a point in $\mathcal{R}_D^1(\mathbb{F}_{q^k})$, is similar to $\mathfrak{R}$ under the action of $\Frob^k$. Therefore, by applying the case $k=1$ to the data $(X\otimes\mathbb{F}_{q^k}, S\otimes\mathbb{F}_{q^k}, \mathfrak{R})$, we obtain the desired result.

For $k=1$, we need to prove that when $o\in \mathcal{R}_D^1(\mathbb{F}_q)$ is similar to $\mathfrak{R}$, the following identity for any $e$ coprime to $n$ holds:
\begin{equation}\label{Jeee}
J^{e}(e_\rho)= (q-1) q^{-\frac{1}{2}\dim \mathfrak{M}_{n}^e(o)}J^{\ggg,e}(h),
\end{equation}
where $h$ is defined in Theorem \ref{B3}. 
If moreover $S\subseteq X(\mathbb{F}_q)$, and the ramifications are split i.e. each $\rho_v$ is a principal series for $v\in S$, then we have 
\begin{equation}\label{parJee}
J^{e, \xi}(e_\rho)= (q-1) q^{-\frac{1}{2}\dim \mathcal{M}_{n, S}^{e, \xi}}J^{\ggg,e, \xi}(f). \end{equation}
where $f$ is also defined in Theorem \ref{B3}. 
In fact, by Theorem \ref{Einfinitesimal} and Theorem \ref{B3}, the equation \eqref{Jeee} can be read as
\[|E_n^{irr}(\mathfrak{R})^{\Frob^{*}}| = q^{-\frac{1}{2}\dim\mathfrak{M}_n^{e}(o) }| \mathfrak{M}_n^{e}(o)(\mathbb{F}_{q}) |.   \]
Similarly, \eqref{parJee} implies Theorem \ref{AL}.

We prove \eqref{Jeee} first. 
We decompose $J^{e}(e_\rho)$ by the coarse expansion introduced in Section \ref{CoEx}: 
\[   J^{e}(e_\rho)=\sum_{\chi} J_{\chi}^{e}(e_\rho)   .\]
Note that the support of $e_\rho$ is contained in $G(\mathcal{O})$.
Theorem \ref{vanishing} says when $(e,n)=1$, we have
\[ J^{e}_{\chi}(e_\rho)=0, \]
unless $\chi=(t-\alpha)^{n}$ for some $\alpha\in \mathbb{F}_q^\times$.
Therefore, we have: 
\[J^{e}(e_\rho)= \sum_{\alpha\in \mathbb{F}_q^\times}J_{(t-\alpha)^n}^{e}(e_\rho). \]
Since the central character of $\rho$ is trivial on $Z_G(\mathbb{F}_q)$ (Proposition \ref{centralc}),  the function $e_\rho$ satisfies that 
\[e_\rho(zx)=e_\rho(x), \]
for any $x\in G(\AAA)$ and $z\in Z_G(\mathbb{F}_q)$.
We have 
\begin{equation}
J^{e}(e_\rho)= (q-1) J^{e}_{unip}({e_\rho}).
\end{equation}
where $J^{e}_{unip}({e_\rho})=J^{e}_{(t-1)^n}({e_\rho})$. 

Next, we need to use the Lie algebra trace formula and the following Proposition concerning an explicit computation of the Fourier transform.

\begin{prop}\label{ft}
Let \[K_X=\sum_{v}d_v v,\]
be the canonical divisor that we use in the choice of Fourier transform in Section \ref{Fourier}. 
Let $v$ be a place of $F$. We have the following results.
\begin{enumerate}
\item
For the characteristic function of $\ggg(\ooo_v)$, we have
\[{\mathbbm{1}}_{\ggg(\ooo_v)}  =  q^{- d_v n^2 \deg v }  \hat{  \mathbbm{1}}_{ \wp_{v}^{-d_v} \ggg(\ooo_v)}. \] 
\item
For the characteristic function of  $\mathfrak{I}_v$, we have
\[ { \mathbbm{1}}_{ \mathfrak{I}_{v}}= q^{-{d_v n^2 \deg v }   -\frac{\deg v}{2}(n^2- n)}\hat{\mathbbm{1}}_{ \wp_{v}^{-d_v-1}{\mathfrak{I}_{v+}}}. \] 
\item (Springer's hypothesis \cite{Kazhdan}\cite{KV}) 
Let $T_v$ be a maximal torus of $G$ defined over $\kappa_v$ and $\ttt_v$ its Lie algebra. Let $\theta$ be any character of $T_v(\kappa_v)$. Let $t_v$ be a regular element in $\ttt_v(\kappa_v)$ and $o_v\in \car(\kappa_v)$ be its image.
Let $\rho=\epsilon_{\kappa_v}(G)\epsilon_{\kappa_v}(T_v)R_{T_v}^G\theta$ be the Deligne-Lusztig virtual
representation of $G(\kappa_v)$ corresponding to $T_v$ and $\theta$. 
Let $$e_{\rho}:=\begin{cases}
\Tr({\rho}(\closure{x}^{-1})), \quad x\in G(\ooo_v);\\
0, \quad x\notin G(\ooo_v); 
\end{cases}$$
where $\closure{x}$ denotes the image of $x$ under the map $G(\ooo_v)\rightarrow G(\kappa_v)$. 
Then for any unipotent element $u\in \mathcal{U}_G(F_v)$, we have
$$e_{\rho}(u)= q^{-{d_v n^2 \deg v } -  \frac{\deg v}{2}(n^2-n))  }\hat{\mathbbm{1}}_{\wp_{v}^{-d_v-1}\Omega(o_v) }(u-1).  $$

\end{enumerate}
\end{prop}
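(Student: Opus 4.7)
The plan is to handle parts (1) and (2) by direct lattice duality computations, and to derive part (3) from Springer's hypothesis for Deligne-Lusztig characters as proved by Kazhdan \cite{Kazhdan} (with the required refinement in \cite{KV}). The common tool is the standard identity $\hat{\mathbbm{1}}_L=\vol(L)\cdot\mathbbm{1}_{L^{\ast}}$ for any full $\ooo_v$-lattice $L\subseteq\ggg(F_v)$, where $L^{\ast}:=\{X\in\ggg(F_v):\psi_v(\Tr(XY))=1\text{ for all }Y\in L\}$ denotes the dual with respect to the trace pairing twisted by $\psi_v$.

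For (1), since $\psi_v$ is trivial on $\wp_v^{-d_v}$ but not on $\wp_v^{-d_v-1}$, the trace pairing yields $\ggg(\ooo_v)^{\ast}=\wp_v^{-d_v}\ggg(\ooo_v)$. Combined with $\vol(\ggg(\ooo_v))=1$ giving $\vol(\wp_v^{-d_v}\ggg(\ooo_v))=q^{d_v n^2\deg v}$, applying the identity to $L=\wp_v^{-d_v}\ggg(\ooo_v)$ yields (1). For (2), I would read the pairing entry by entry: the condition $\psi_v(\Tr(XY))=1$ for all $X=(x_{ij})\in\mathfrak{I}_v$ forces $y_{ii}\in\wp_v^{-d_v}\ooo_v$, $y_{ji}\in\wp_v^{-d_v}\ooo_v$ for $i<j$, and $y_{ji}\in\wp_v^{-d_v-1}\ooo_v$ for $i>j$, i.e., $\mathfrak{I}_v^{\ast}=\wp_v^{-d_v-1}\mathfrak{I}_{v+}$. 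The indices $[\ggg(\ooo_v):\mathfrak{I}_v]=q_v^{n(n-1)/2}$ and $[\mathfrak{I}_v:\mathfrak{I}_{v+}]=|\ttt(\kappa_v)|=q_v^n$ give $\vol(\mathfrak{I}_{v+})=q^{-\deg v\cdot n(n+1)/2}$, whence $\vol(\wp_v^{-d_v-1}\mathfrak{I}_{v+})=q^{d_v n^2\deg v+\deg v(n^2-n)/2}$, yielding (2).

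For (3), I would decompose $\wp_v^{-d_v-1}\Omega(o_v)$ as a disjoint union of cosets of $\wp_v^{-d_v}\ggg(\ooo_v)$ indexed by lifts $X\in\ggg(\ooo_v)$ of elements $\bar X\in\closure{\Omega}(o_v)\subseteq\ggg(\kappa_v)$, and Fourier-transform term by term. For $u\in G(\ooo_v)$ one has $u-1\in\ggg(\ooo_v)$, so each coset contributes $\vol(\wp_v^{-d_v}\ggg(\ooo_v))\cdot\psi_v(\wp_v^{-d_v-1}\Tr(X(u-1)))$, which depends only on the reductions $\bar X,\bar u$ and equals $q^{d_v n^2\deg v}\cdot\psi_0(\Tr_{\kappa_v/\mathbb{F}_q}(\Tr(\bar X(\bar u-1))))$ by the construction of $\psi_v$ in Section \ref{Fourier}; for $u\notin G(\ooo_v)$ each coset Fourier transform vanishes by orthogonality, matching $e_\rho(u)=0$. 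Summing over $\bar X$ reduces the claim to the finite-field Springer identity
\[ \rho(\bar u^{-1})=|G(\kappa_v)|_p^{-1}\sum_{\bar X\in\closure{\Omega}(o_v)}\psi_0(\Tr_{\kappa_v/\mathbb{F}_q}(\Tr(\bar X(\bar u-1)))), \]
for $\theta$ in general position associated to the regular element $t_v\in\ttt_v(\kappa_v)$ via an $\mathbb{F}_q$-isomorphism $T_v(\kappa_v)\simeq\ttt_v(\kappa_v)$ (available for $GL_n$ in the required characteristic). Since $|G(\kappa_v)|_p=q_v^{n(n-1)/2}=q^{\deg v(n^2-n)/2}$, this gives exactly the normalization $q^{-d_v n^2\deg v-\deg v(n^2-n)/2}$ claimed in (3).

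The main obstacle is the constant-matching in part (3): identifying the normalization at the $\ooo_v$ level with Kazhdan's constant in the finite-field identity, and ensuring that the correspondence $\theta\leftrightarrow t_v$ used there is consistent with our conventions (in particular, that the use of $\bar u-1$ rather than $\log\bar u$ is harmless for $GL_n$ in the required characteristic, which follows from the fact that the trace pairing only sees the reduction modulo $\wp_v$). Parts (1) and (2) are direct lattice bookkeeping.
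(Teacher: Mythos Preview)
Your approach matches the paper's: lattice duality for (1) and (2), and reduction to Springer's hypothesis for (3). The paper is terser, citing \cite[Lemma 1.8.7(a)]{KV} for the general fact $\ggg(F_v)_x^\perp=\wp_v^{-d_v-1}\ggg(F_v)_{x+}$ at any point $x$ of the building, while you verify the Iwahori case by hand; both are fine.

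One clarification on part (3): you describe the ``main obstacle'' as ensuring that a correspondence $\theta\leftrightarrow t_v$ is consistent with conventions, and you restrict to $\theta$ in general position. This is a misreading of Springer's hypothesis. The restriction of $\epsilon_{\kappa_v}(G)\epsilon_{\kappa_v}(T_v)R_{T_v}^G\theta$ to unipotent elements is the Green function $Q_{T_v}^G$, which is \emph{independent of} $\theta$; this is why the proposition allows ``any character $\theta$'' and chooses $t_v$ separately. Springer's hypothesis (as in \cite{Kazhdan} or \cite[Theorem A.1]{KV}) identifies $Q_{T_v}^G(u)$ with the finite-field Fourier transform of $\mathbbm{1}_{\closure{\Omega}(o_v)}$ evaluated at $u-1$, for any regular $t_v\in\ttt_v(\kappa_v)$, with no reference to $\theta$ at all. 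So there is no correspondence to check; once you have unwound the $\ooo_v$-level Fourier transform into a finite sum over $\closure{\Omega}(o_v)$ (which you do correctly), you simply invoke Springer's hypothesis as a black box. Your worry about $u-1$ versus $\log u$ is also moot: the version in \cite{KV} is stated for any Springer isomorphism, and $u\mapsto u-1$ is one for $GL_n$.
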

\begin{proof}
The dual lattice of $\ggg(\ooo_v)$ with respect to the trace pairing $(x,y)\mapsto \psi(\Tr(xy))$ is given by
$$\ggg(\ooo_v)^{\perp}:=\{ g\in \ggg(F_v)\mid \langle g, h \rangle\in \wp_v^{-d_v}\text{ for each $h\in \ggg(\ooo_v)$}  \}, $$
and it is equal to $\wp_v^{-d_v}\ggg(\ooo_v)$. 
Let $\mathcal{B}(G)_v$ be the extended Bruhat-Tits buildings of $G(F_v)$. For a point $x\in \mathcal{B}(G)_v$, let $\ggg(F_v)_{x}$ be the corresponding parahoric subalgebra of $\ggg(F_v)$ and $\ggg(F_v)_{x+}$ be pro-nilpotent radical of $\ggg(F_v)_{x}$. 
It follows (\cite[Lemma 1.8.7(a)]{KV}) that for any point $x\in \mathcal{B}(G)_v$, 
$$ \ggg(F_v)_{x}^{\perp}=\wp_{v}^{-d_v-1}\ggg(F_v)_{x+}.$$
Based on this, the first and second statements follow from a direct calculation. 

The last statement can be reduced to Springer's hypothesis, which was first proved by Kazhdan in \cite{Kazhdan} for large characteristics. A proof of Springer's hypothesis for all the characteristics (as long as a Springer isomorphism exists) can be found in \cite[Theorem A.1]{KV}. 
\end{proof}

By definition, for any $z\in \zzz_\ggg(\mathbb{F}_q)$ and $x\in \ggg(\AAA)$, we have
\[ \hat{h}(z+x)= \int_{\ggg(\AAA)} h(y)\psi(\Tr(xy))\psi(\Tr(zy))\d y.  \]
Here $\psi$ is defined in section \ref{Fourier} as the composition, \[ \psi: \mathbb{A}\longrightarrow \frac{\mathbb{A}}{F+\prod_v\wp_v^{-d_v}\mathcal{O}_v} \overset{\sim}{\longrightarrow} \mathbb{F}_q\overset{\psi_0}{\longrightarrow}\mathbb{C}^\times. \]
Note that, $h(y)\neq 0$ if and only if $y_v\in \mathbbm{1}_{\wp_v^{-d_v-1}\Omega(o_v)}$ for all $v$. Let $a_v$ be the constant term of the characteristic polynomial $o_v\in \car(\kappa_v)$. We have \[\psi(\Tr(zy))=\prod_{v} \psi_v(\Tr(zy_v))=\prod_{v}\psi_0(\Tr_{\kappa_v|\mathbb{F}_q}(za_v)). \]
Since $o\in \mathcal{R}_D^1(\mathbb{F}_q)$, we have 
\[ \psi(\Tr(zy))=1. \]
Therefore \[ \hat{h}(z+x)= \hat{h}(x).  \]
As in the group case, using the coarse expansion and applying Theorem \ref{vanishing}, we have 
\begin{equation} J^{\ggg, e}(\hat{h}) = q J^{\ggg, e}_{unip}(\hat{h}).   \end{equation}
Here $J^{\ggg, e}_{unip}(\hat{h})=J^{\ggg, e}_{t^n}(\hat{h})$. 

Part (1) and Part (3) of Proposition \ref{ft} imply that the function \[ q^{ -(2g-2)n^2   -\frac{1}{2}(n^2-n)\deg S}\hat{h}\] has support in $\ggg(\mathcal{O})$ and coincides with $e_\rho$ on unipotent elements via the map $x\mapsto x+1$.  Here we have used the fact that $o\in \mathcal{R}_D^1(\mathbb{F}_q)$ is similar to $\mathfrak{R}_D$ so that the point $o_v$ has a representative $t_v$ in the torus used in the Deligne-Lusztig representation $\rho_v$.
By the definitions of truncated traces, we deduce that \[ J_{unip}^{e}(e_\rho)=q^{ -(2g-2)n^2   -\frac{1}{2}(n^2-n)\deg S} J^{\ggg, e}_{nilp}(\hat{h}) . \]

Applying the infinitesimal trace formula \eqref{ITF}, we get the following equality:
\begin{equation} \label{525}
J^{e}(e_\rho)= (q-1) q^{(1-g)n^2-1-\frac{1}{2}(n^2-n)\deg S}J^{\ggg,e}(h). 
\end{equation}
To finish the proof, we note that, by faithful flatness of Hitchin fibration and Remark \ref{RMKS3}, 
if $\deg S\geq 3-2g$, we have
\[ \dim \mathfrak{M}_n^{e}(o)=\dim \mathfrak{M}_n^{e}(D)-\dim \mathcal{R}_D^1= 2(g-1)n^2+2+(n^2-n)\deg S . \] 
Note that $S\neq \emptyset$, so we only exclude the case that $g=0$ and $\deg S\leq 2$. In this case, $\mathfrak{M}_n^{e}(o)$ is empty. 

Finally, Let us explain the proof of \eqref{parJee}. In this case, the Deligne-Lusztig induced representation $\rho_v$ is induced from a pair $(T, \theta_v)$, where $T$ is the maximal split torus of diagonal matrices. Let $\widetilde{\rho}$ be the character of 
\(G(\mathcal{O}^S)\mathcal{I}_S,\) that is trivial on $G(\mathcal{O}^S)$ and is the character $\theta_v: \mathcal{I}_v\rightarrow T(\mathbb{F}_q)\rightarrow \mathbb{C}^\times$ for every place $v\in S$. We have \[ \rho=\mathrm{Ind}_{G(\mathcal{O}^S)\mathcal{I}_S}^{G(\mathcal{O})} \widetilde{\rho}. \]
Therefore, for any irreducible automorphic representation $\pi$ of $G(\AAA)$, we have 
\[\Hom_{G(\mathcal{O})}(\rho, \pi)\cong \Hom_{G(\mathcal{O}^S)\mathcal{I}_S}(\widetilde{\rho}, \pi).   \]
Let \[ e_{\tilde{\rho}}=\vol(\mathcal{I}_S)^{-1} \mathbbm{1}_{G(\mathcal{O}^S)\mathcal{I}_S}\widetilde{\rho}^{-1}.  \]
We obtain as in Theorem \ref{Einfinitesimal} that \[ J^{e,\xi}(e_{\rho}) =J^{e,\xi}(e_{\widetilde{\rho}}).  \]
The rest of the proof is the same as in the previous case.

\subsection{Proof of Theorem \ref{malpha}}\label{finalsection}
After our main theorem, it suffices to prove the following theorem. The divisibility by $|\Pic_X^{0}(\mathbb{F}_{q^k})|$ is essentially due to an argument of Deligne in \cite{Deligne}. 
\begin{theorem}
Suppose that $(e,n)=1$ and $o\in \mathcal{R}_D^1(\mathbb{F}_q)$. 
There are $q$-Weil integers $\alpha$ and integers $m_\alpha$ such that 
\[  |\mathfrak{M}^{e}_n(o)(\mathbb{F}_{q^k})|=|\Pic^{0}_X(\mathbb{F}_{q^k})|q^{\frac{k}{2}\dim \mathfrak{M}^{e}_n(o)}\sum_\alpha m_\alpha \alpha^k. \] 
The $\alpha$ that appeared in the above formula are monomials in $q$-Weil numbers of the curve $X$ and roots of unity. 
\end{theorem}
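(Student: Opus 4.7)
The plan is to combine Theorem \ref{THEO}, the ``split case'' computation provided by Theorems \ref{AL}--\ref{1.4} and Mellit's formula \cite[Corollary 7.9]{Mellit}, and the Grothendieck--Lefschetz formalism applied to the $\mathbb{F}_q$-variety $\mathfrak{M}_n^e(o)$. Focusing on the main case in which $o$ is similar to some generic $\mathfrak{R}$ (the remaining $o$ being handled by a degeneration along the Hitchin base), Theorem \ref{THEO} identifies
\[|\mathfrak{M}_n^e(o)(\mathbb{F}_{q^k})| = q^{\frac{k}{2}\dim\mathfrak{M}_n^e(o)}\,|E_n^{irr}(\mathfrak{R})^{\Frob^{*k}}|, \quad k\geq 1,\]
so the task reduces to factoring the right-hand side as $|\Pic^{0}_X(\mathbb{F}_{q^k})|\sum_\alpha m_\alpha\alpha^k$. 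Divisibility by $|\Pic^{0}_X(\mathbb{F}_{q^k})|$ follows from Deligne's twist argument (cf.\ \cite[3.1--3.2]{Deligne}): the $|\Pic^{0}_X(\mathbb{F}_{q^k})|$ unramified $\overline{\mathbb{Q}}_\ell$-characters of $\pi_1(\overline{X}-\overline{S})$ that are $\Frob^{*k}$-fixed act on $E_n^{irr}(\mathfrak{R})^{\Frob^{*k}}$ by tensor product, preserving the local ramifications; by the Galois-side analogue of the second part of Proposition \ref{Autcusp}, a self-twist of an irreducible $\mathfrak{L}$ by such a character forces the character to be trivial, so the action is free.

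To obtain the explicit expansion $\sum_\alpha m_\alpha\alpha^k$, I would base-change to an auxiliary field. Fix $N\geq 1$ large enough that $S$ splits into $\mathbb{F}_{q^N}$-rational points and every irreducible factor of every $\mathfrak{R}_x$ is $\Frob^{*N}$-fixed. Over $\mathbb{F}_{q^N}$, Theorem \ref{AL} and Theorem \ref{1.4} apply to the base-changed data, and Mellit's explicit formula expresses
\[|E_n^{irr}(\mathfrak{R})^{\Frob^{*Nj}}| \,=\, |\Pic^{0}_X(\mathbb{F}_{q^{Nj}})|\sum_\beta c_\beta \beta^{Nj}, \quad j\geq 1,\]
with $\beta$ ranging over a finite multiset of monomials in the $q^N$-Weil numbers of $X\otimes\mathbb{F}_{q^N}$, which are precisely the $N$-th powers of the $q$-Weil numbers of $X$. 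This establishes the claimed form along the arithmetic progression $k\in N\mathbb{Z}_{>0}$.

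The main obstacle, and the concluding step, is to descend the formula from $N\mathbb{Z}_{>0}$ to every $k\geq 1$. Since $\mathfrak{M}_n^e(o)$ is a smooth $\mathbb{F}_q$-variety, Grothendieck--Lefschetz together with Deligne's Weil II already provides an a priori representation
\[|\mathfrak{M}_n^e(o)(\mathbb{F}_{q^k})| = \sum_i (-1)^i \mathrm{Tr}\bigl(\Frob^k\mid H^i_c(\mathfrak{M}_n^e(o)_{\overline{\mathbb{F}}_q},\overline{\mathbb{Q}}_\ell)\bigr),\quad k\geq 1,\]
whose participating Frobenius eigenvalues are automatically $q$-Weil integers. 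Matching this a priori expansion against the explicit Mellit formula along $N\mathbb{Z}_{>0}$ forces each relevant eigenvalue $\alpha$ (after dividing out the $|\Pic^{0}_X(\mathbb{F}_{q^k})|$ prefactor guaranteed by the twist action and the $q^{\frac{k}{2}\dim}$-normalisation) to satisfy $\alpha^N$ equal to a monomial in $q^N$-Weil numbers of $X\otimes\mathbb{F}_{q^N}$; hence each $\alpha$ is, up to an $N$-th root of unity, a monomial in $q$-Weil numbers of $X$. Collecting the contributions gives the desired representation.
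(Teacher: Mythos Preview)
Your overall architecture---base change to the split case, invoke Theorems \ref{AL}, \ref{1.4} and Mellit, then descend by matching $N$-th powers of Frobenius eigenvalues against the a~priori Grothendieck--Lefschetz expansion---is close to what the paper does. The genuine gap is your argument for divisibility by $|\Pic^{0}_X(\mathbb{F}_{q^k})|$.

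You claim that the $|\Pic^{0}_X(\mathbb{F}_{q^k})|$ unramified $\Frob^{*k}$-fixed characters act freely on $E_n^{irr}(\mathfrak{R})^{\Frob^{*k}}$, citing Proposition \ref{Autcusp}. But that proposition only treats characters factoring through $\deg\circ\det$; the unramified rank-$1$ local systems on $\overline{X}$ you need here are \emph{not} of that type. Worse, the freeness itself can fail: if $\chi$ is an unramified order-$d$ character with $d\mid n$, any $\mathfrak{L}$ of the form $p_*\psi$ for the \'etale $d$-cover $p$ cut out by $\chi$ satisfies $\mathfrak{L}\otimes\chi\cong\mathfrak{L}$, and one can arrange $\mathfrak{L}$ to be irreducible with regular semisimple generic local monodromy. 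Genericity of $\mathfrak{R}$ only forbids direct-sum decompositions over $\overline{X}-\overline{S}$, not self-twists, so it does not rule this out. Without freeness your divisibility fails for general $k$, and then the descent step cannot extract the $|\Pic^0_X(\mathbb{F}_{q^k})|$ prefactor from the cohomological expansion.

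The paper handles this point differently and geometrically: it uses the action of $A=\Pic^0_X$ on $\mathfrak{M}_n^e(o)$ by tensoring together with the $A$-equivariant determinant map $f:\mathfrak{M}_n^e(o)\to A$, and then applies Deligne's structural result \cite[6.4, 6.5]{Deligne} to conclude that $(R^i f_!\mathbb{Q}_\ell)^{\pi_1(\overline{A})}\cong a^*\mathcal{H}^i$ for sheaves $\mathcal{H}^i$ on $\Spec(\mathbb{F}_q)$, yielding the factorization $|\mathfrak{M}_n^e(o)(\mathbb{F}_{q^k})|=|\Pic^0_X(\mathbb{F}_{q^k})|\sum_i(-1)^i\Tr(F_q^{*k}\mid\mathcal{H}^i)$ directly in cohomology. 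A separate step (via Theorem \ref{PGm} after base change) then shows the eigenvalues on $\mathcal{H}^i$ are divisible by $q^{\dim/2}$. You should replace your twist argument by this geometric one; the rest of your outline is essentially sound. (Your remark about handling non-similar $o$ ``by degeneration along the Hitchin base'' is also too vague to stand, but in context the theorem is only applied to $o$ similar to $\mathfrak{R}$ anyway.)
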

\begin{proof}
By Grothendieck-Lefscehtz fixed point formula, and Deligne's theorems \cite[Théorème 1]{Weil2} and \cite[Théorème 5.2.2]{Integrality}, we know that there are $q$-Weil integers $\gamma$ and integers $n_\gamma$ such that 
\[|\mathfrak{M}^{e}_n(o)(\mathbb{F}_{q^k})|=\sum_{\gamma} n_\gamma \gamma^k, \quad \forall k\geq 1. \] 
We first prove that the $q$-Weil numbers of $\mathfrak{M}_n^{e}(o)$ (i.e. these $\gamma$) are divisible by $q^{\frac{1}{2}\dim \mathfrak{M}_n^{e}(o)}$. Note that it is sufficient to prove that the $q^m$-Weil numbers of $\mathfrak{M}_n^{e}(o)_{\mathbb{F}_{q^m}}$ are divisible by $(q^m)^{\frac{1}{2}\dim \mathfrak{M}_n^{e}(o)}$ for some $m$. We choose $m$ divisible enough so that $S\otimes \mathbb{F}_{q^m}\subseteq X(\mathbb{F}_{q^m})$ and every ramification $\mathfrak{R}_x$ is split for $x\in \closure{S}$. 
Using Theorem \ref{THEO} and Theorem \ref{AL} proved previously,  we see that for any generic $(e', \xi)$ with $\xi$ being admissible, we have
\[  |\mathfrak{M}_n^e(o)(\mathbb{F}_{q^{km}})| = |\mathcal{M}_{n,S}^{e^{\prime}, \xi}(\mathbb{F}_{q^{km}})|, \quad \forall k\geq 1.  \]
This implies that the $q^m$-Weil numbers of $\mathfrak{M}_n^e(o)_{\mathbb{F}_{q^m}}$ coincide with those of $(\mathfrak{M}_{n,S}^{e^{\prime}, \xi})_{\mathbb{F}_{q^m}}$. Therefore, the desired divisibility is a corollary of Theorem \ref{PGm}.

Deligne (\cite[6.4, 6.5]{Deligne}) has proven a result that can be applied to show that $|\mathfrak{M}_{n}^{e}(o)(\mathbb{F}_{q^k})|$ is divisible by $|\Pic_{X}^0(\mathbb{F}_{q^k})|$.  
Let $A=\Pic_{X}^0$ and $\closure{A}$ its base change to $\closure{\mathbb{F}}_q$.  
Note that $A$ acts on $\mathfrak{M}_{n}^{e}(o)$ by tensor on the underlying vector bundle of a Hitchin bundle, and we have a morphism \[ f: \mathfrak{M}_{n}^{e}(o)\longrightarrow A\] which sends a Hitchin bundle to the determinant of its underlying vector bundle. 
This morphism is clearly equivariant for the action of $A$ on $\mathfrak{M}_{n}^{e}(o)$ defined above and the action of $A$ on itself by $\mathcal{L}_0: \mathcal{L}_1 \longrightarrow  \mathcal{L}_1\otimes \mathcal{L}^{\otimes n}_0$. 
In \cite[6.4, 6.5]{Deligne}, Deligne proves that under these hypotheses, the sheaf $R^{i}f_{!}\mathbb{Q}_\ell$ is smooth and semisimple, and there is a sheaf $\mathcal{H}^i$ over $\Spec(\mathbb{F}_q)$ such that the invariant sub-sheaf of $R^{i}f_{!}\mathbb{Q}_\ell$ under the action of ${\pi_1(\closure{A})}$ satisfies: 
\[  (R^{i}\closure{f}_{!}\mathbb{Q}_\ell)^{\pi_1({A}\otimes\closure{\mathbb{F}}_q)}\cong a^\ast \mathcal{H}^{i}|_{\closure{A}},  \]
where $a: A\longrightarrow \Spec(\mathbb{F}_{q})$ is the structure morphism. The Grothendick-Lefschetz fixed points formula implies that:
\begin{equation}\label{mneof}  |\mathfrak{M}_{n}^{e}(o)(\mathbb{F}_{q^k})| =  |\Pic_{X}^0(\mathbb{F}_{q^k})|\sum_{i}(-1)^i\Tr(F_{q}^{\ast k} | \mathcal{H}^i|_{\Spec(\closure{\mathbb{F}}_q)}   )  .\end{equation}

To complete the proof, it is necessary to demonstrate that the $q$-Weil numbers of $\mathcal{H}^i$ are divisible by $q^{\frac{1}{2}\dim \mathfrak{M}_n^e(o)}$. If there is an eigenvalue $\beta$ of $F_q^\ast$ on $\mathcal{H}^i|_{\Spec(\closure{\mathbb{F}}_q)}$ that is not divisible by $q^{\frac{1}{2}\dim \mathfrak{M}_n^e(o)}$, we can assume that $\beta$ has the minimum weight among such $q$-Weil numbers. 
Note that $ |\Pic_{X}^0(\mathbb{F}_{q^k})|=\prod_{i=1}^{2g}(1-\sigma_i^k)$ where $\sigma_i$ are $q$-Weil numbers of the curve $X$ which, for any isomorphism between $\closure{\mathbb{Q}}_\ell$ and $\mathbb{C}$,  have absolute value $q^{\frac{1}{2} }$. 
Then, \eqref{mneof} leads to a contradiction because $\beta$ would also be a $q$-Weil number of $\mathfrak{M}_{n}^{e}(o)$.

Finally, Mellit \cite[Corollary 7.9]{Mellit} has shown that the number \[ q^{-\dim \mathcal{M}_{n,S}^{e^{\prime}, \xi}/2}|\mathcal{M}_{n,S}^{e^{\prime}, \xi}(\mathbb{F}_q)|\] can be obtained by evaluating a polynomial which is independent of the curve $X$ in the Frobenius eigenvalues of the $\ell$-adic cohomology the curve $X$. 
In particular, $q$-Weil numbers of $\mathcal{M}_{n,S}^{e^{\prime}, \xi}$ are monomials in those of the curve $X$ and $q$ or $q^{-1}$. To conclude, we can apply \cite[Théorème C.1]{Yu1} which implies that they are indeed monomials in $q$-Weil numbers of the curve $X$ multiplied by $q^{\dim \mathcal{M}_{n,S}^{e^{\prime}, \xi}/2}$. 
Therefore this completes the proof of the last statement. 
\end{proof}

\appendix

\section{Proof of the existence of similar elements}\label{regulartra}
In this appendix, we show that if $p\neq 2$ and every place $v\in S$ satisfies that $|\kappa_v|\geq \frac{n(n-1)}{2}$,  then for any $(\mathfrak{R}_x)_{x\in \closure{S}}$ satisfying \eqref{13} and \eqref{prode=1}, there exists an $o\in \prod_{v\in S}\mathcal{R}_v(\mathbb{F}_q)$ similar to $(\mathfrak{R}_x)_{x\in \closure{S}}$. 

It is sufficient to consider the case that $S$ is a singleton $\{v\}$. Let $q'=|\kappa_v|$. 

Let $(n_1, n_2, \ldots, n_r)$ be a partition of $n$. We need to show that there is a monic polynomial $o_v\in \kappa_v[t]$ so that the degree of irreducible factors of $o_v$ coincides with this partition. 

We first show that there are $x_i\in \mathbb{F}_{q'}$, $i=1, \ldots, r$, such that $x_i$ are pairwise distinct and \[\sum_{i=1}^{r}x_i=0.\]
Note that since $p\neq 2$, for any $i\neq j$,
the linear subspaces defined by $x_1+x_2+\cdots +x_r=0$ and $x_i=x_j$ generate the whole space. 
Therefore their intersection is a linear subspace of $(\mathbb{F}_{q'})^r$ of dimension $r-2$. 
We see that the cardinality of the set \[ \{(x_1, \ldots, x_r)\in \mathbb{F}_{q'}^{r}\mid x_i\neq x_j (i\neq j), \text{ and } \sum x_i=0  \}\]
 is strictly larger than
\[ {q'}^{r-1}-{q'}^{r-2} \frac{r(r-1)}{2}\geq 0. \]
 This proves the existence of such $x_i$. 

We then apply the following lemma by setting $c=x_i$ and $m=n_i$ ($i=1, \ldots, r$) to construct the polynomial $o_v$. This will complete the proof. 
\begin{lemm}
Suppose that $p\neq 2$. 
For any $c\in \mathbb{F}_{q}$, there exists an irreducible monic polynomial of degree $m\in \mathbb{N}^{\ast}$ in $\mathbb{F}_q[x]$ of the form
\[x^m-cx^{m-1}+\ldots\]
\end{lemm}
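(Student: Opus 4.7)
The approach is to reformulate the problem in terms of traces of field extensions and then perform a direct counting argument. A monic irreducible polynomial of degree $m$ over $\mathbb{F}_q$ with second coefficient $-c$ is precisely the minimal polynomial of some $\alpha\in\mathbb{F}_{q^m}$ of degree exactly $m$ over $\mathbb{F}_q$ satisfying $\mathrm{Tr}_{\mathbb{F}_{q^m}/\mathbb{F}_q}(\alpha)=c$, since the sum of Galois conjugates of $\alpha$ equals its trace. Thus the task reduces to producing such an $\alpha$.

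For $m=1$, we can take the polynomial $x-c$ and we are done. For $m\geq 2$, the idea is that the fiber $T_c:=\{\alpha\in\mathbb{F}_{q^m}\mid \mathrm{Tr}_{\mathbb{F}_{q^m}/\mathbb{F}_q}(\alpha)=c\}$ has size $q^{m-1}$ (by surjectivity of the trace), while the elements of degree strictly less than $m$ lie in $\bigcup_{d\mid m,\, d<m}\mathbb{F}_{q^d}$. Using the transitivity formula $\mathrm{Tr}_{\mathbb{F}_{q^m}/\mathbb{F}_q}(x)=(m/d)\,\mathrm{Tr}_{\mathbb{F}_{q^d}/\mathbb{F}_q}(x)$ for $x\in \mathbb{F}_{q^d}$, one sees that the number of elements of $\mathbb{F}_{q^d}$ having trace $c$ in $\mathbb{F}_{q^m}/\mathbb{F}_q$ is at most $q^d$ (equal to $q^{d-1}$ unless $p\mid m/d$ and $c=0$, in which case it can be $q^d$). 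Hence it suffices to verify the strict inequality $\sum_{d\mid m,\, d<m} q^d < q^{m-1}$, as this will force the existence of at least one element of $T_c$ of degree exactly $m$.

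For $m\geq 3$ and any $q\geq 2$, the inequality follows from an elementary estimate: $\sum_{d\mid m,\, d<m} q^d\leq \sum_{d=1}^{\lfloor m/2\rfloor}q^d=(q^{\lfloor m/2\rfloor+1}-q)/(q-1)$, and one checks directly that this is strictly less than $q^{m-1}$ whenever $m\geq 3$ (the borderline cases $m=3$ and $m=4$ reduce to the trivial bounds $q<q^2$ and $q+q^2<q^3$). The case $m=2$ is where the hypothesis $p\neq 2$ enters essentially: here $\mathbb{F}_{q^m}$ has only one proper subfield $\mathbb{F}_q$, and $\mathrm{Tr}_{\mathbb{F}_{q^2}/\mathbb{F}_q}(x)=2x$ for $x\in\mathbb{F}_q$. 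Since $p\neq 2$, the equation $2x=c$ has a unique solution, so only one element of $\mathbb{F}_q$ lies in $T_c$, leaving $|T_c|-1=q-1\geq 2$ elements of degree $2$ with trace $c$.

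The only subtle point is the exceptional case $m=2$, which genuinely requires the hypothesis $p\neq 2$; all other cases follow from a routine cardinality bound. Since the plan reduces to these elementary checks, no substantial obstacle is expected.
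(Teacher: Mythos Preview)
Your proof is correct and follows essentially the same approach as the paper: reformulate via the trace map, use that each trace fiber has $q^{m-1}$ elements, bound the union of proper subfields by $\sum_{d\leq \lfloor m/2\rfloor}q^{d}<q^{m-1}$ for $m\geq 3$, and handle $m=2$ separately using that $\mathrm{Tr}_{\mathbb{F}_{q^2}/\mathbb{F}_q}(x)=2x$ is injective on $\mathbb{F}_q$ when $p\neq 2$. Your treatment is slightly more explicit (you compute $|T_c\cap\mathbb{F}_q|=1$ rather than the paper's equivalent observation that $\ker\mathrm{Tr}\neq\mathbb{F}_q$), but the argument is the same.
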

\begin{proof}
There is nothing to prove when $m=1$, so we suppose $m\geq 2$. 

The lemma is equivalent to the existence of \[ \alpha\in  \mathbb{F}_{q^m}\backslash\bigcup_{d\mid m, d\neq m} \mathbb{F}_{q^{d}}  \]
such that $\Tr_{\mathbb{F}_{q^m}/\mathbb{F}_q}(\alpha)=c$. Since the trace map is an $\mathbb{F}_q$-linear surjective map, there are exactly $q^{m-1}$ elements $\alpha\in\mathbb{F}_{q^m} $ such that $\Tr_{\mathbb{F}_{q^m}/\mathbb{F}_q}(\alpha)=c$. 

If $m\neq 2$, then we have the following estimate:
\[   |\bigcup_{d\mid m, d\neq m} \mathbb{F}_{q^{d}}|\leq \sum_{i=1}^{i=[m/2]} q^{i}< q^{m-1}.\]
Therefore, there is some $\alpha$ satisfying the requirement. 

If $m=2$, then we need to show that $\ker\Tr \neq \mathbb{F}_q$. This is clear if $p\neq 2$. \end{proof}

\end{document}